\renewcommand{\d}{\delta}
\renewcommand{\l}{\lambda}
\newcommand{\p}{\partial}
\newcommand{\g}{\gamma}
\newcommand{\e}{\varepsilon}
\newcommand{\f}[2]{\frac{#1}{#2}}
\newtheorem{theorem}{Theorem}[section]
\newtheorem{lemma}[theorem]{Lemma}
\newtheorem{proposition}[theorem]{Proposition}
\newtheorem{property}[theorem]{Property}
\theoremstyle{definition}
\newtheorem{remark}[theorem]{Remark}
\author[a]{M. Saxena\footnote{m.mayank@tue.nl}}
\author[b]{I. Dimitriou \footnote{ idimit@math.upatras.gr}}
\author[a]{S. Kapodistria \footnote{s.kapodistria@tue.nl}}
\affil[a]{\small Department of Mathematics and Computer Science, Eindhoven University of Technology, P.O.~Box 513, 5600MB, Eindhoven.}
\affil[b]{\small Department of Mathematics, 
	University of Patras, P.O.~Box 
	26500, Patras, Greece.}
\date{\small \today}
\begin{document}
	\title{Analysis of the shortest relay queue policy in a \\ cooperative random access network with collisions}

	\maketitle
	
	
	\begin{abstract} 
		The scope of this work is twofold: On the one hand, strongly motivated by emerging engineering issues in multiple access communication systems, we investigate the performance of a slotted-time relay-assisted cooperative random access wireless network with collisions and with join the shortest queue relay-routing protocol. For this model, we investigate the stability condition, and apply different methods to derive the joint equilibrium distribution of the queue lengths. 
		On the other hand, using the cooperative communication system as a vehicle for illustration, we investigate and compare three different approaches for this type of multi-dimensional stochastic processes, namely the compensation approach, the power series algorithm (PSA), and the probability generating function (PGF) approach. 
		We present an extensive numerical comparison of the compensation approach and PSA, and discuss which method performs better in terms of accuracy and computation time. We also  provide details on how to compute the PGF in terms of a solution of a Riemann-Hilbert boundary value problem.  
	\end{abstract}
	\vspace{2mm}
	
	\noindent
	\textbf{Keywords}: {Cooperative communication system; Join the shortest queue; Markov chains; Stability condition; Equilibrium distribution; Compensation approach; Power series algorithm; Boundary value problem.}

	\section{Introduction}
	Cooperative communication is a new communication paradigm in which different terminals (i.e., nodes, devices) in a wireless network share their antennas and resources for distributed transmission and processing. Recent studies have shown that cooperative communications yield significant performance improvements for 5G networks, which need massive uncoordinated access, low latency, energy efficiency and ultra-reliability \cite{nos}.
	
	The unprecedented growth of wireless networking, and the ever growing demand for higher data rates and capacity over the last decades, have already pushed the limits of current cellular systems \cite{teh}. By exploiting the spatial diversity inherent to wireless channels, which is an important tool to overcome the effects of fading (decrease in signal power due to path loss), shadowing and attenuation (decrease in signal strength), relay-based cooperative communications have been proposed as the appropriate solution to achieve the requirements of future needs; see e.g., \cite{hong,liu}.

	A typical relay-based cooperative wireless network operates as follows: There exists a network of a finite number of source users, a finite number of relay nodes and a common destination node. The source users transmit packets to the destination node with the cooperation of the relays. If a direct transmission of a user's packet to the destination fails, a cooperation strategy among sources and relays is employed to specify the relays that will store the blocked packet in their buffer. Relays are responsible for the transmission of the blocked packets to the destination, e.g., \cite{,pap2}. In a wireless network, transmission failures occur either due to packet collisions, or due to channel fading/noise and attenuation. In both cases the  packet has to be re-transmitted at a later slot. The former case occurs when more than one node transmit simultaneously, while the latter one refers to the probabilistic nature of transmissions, see e.g., \cite{pap2,rong,sad}. 
	
	In this work, we consider a simple relay-based cooperative wireless network with a single source, two infinite capacity relay nodes, and a common destination with collisions under a load balancing relay scheme. We assume that due to deep fading and bad channel quality it is impossible for the source to communicate with the destination through a direct link, and thus, cooperation within the relays is imperative. Furthermore, we assume that the relays and the source user are sufficiently close such that the packets transmitted over the channel are always correctly received by the relays. The employed cooperation strategy among source and relays is queue-based, with as ultimate goal to minimise the total transmission time, i.e., the time that is needed to transmit a packet from the source to the destination. To this end, we consider join the shortest queue policy as it seems to be the most appropriate for such a wireless network \cite{magu,mukherjee,borst,sri1}.

	\subsection{Related work}
	
	For networks with cooperation the first point of interest concerns the characterisation of the stable throughput region, aka the stability region. For small, simple networks the stability region can be fully determined, see, e.g., \cite{tsy,dim0,naw}, while for large, general networks, only bounds of these regions are known \cite{sem,raoephre,luo,szpa}. For a thorough overview of several techniques used for the derivation of the stability region the interested reader is referred to  \cite{foss2004overview}. An alternative way to derive stability conditions is to use the concept of dominant systems, under which the network of interest is (stochastically) compared to a simpler one that is easier to analyse, see, e.g., \cite{pap3} and the references therein. Another powerful tool to investigate necessary and sufficient stability conditions for work-conserving queueing networks is the use of fluid models, see \cite{stol2,seva,foss1,foss2,ghad}.
	
	Next to the characterisation of the stability region, the delay performance (i.e., the investigation of the joint relay queue length distribution) is another crucial performance measure in random access networks, which recently has regained attention due to the rapid development of real-time applications that require delay-based guarantees \cite{gao}. However, the impact of interacting queues causes severe mathematical difficulties, and there are very few studies that deal with the stability of the queueing delay. In \cite{naka}, by performing an appropriate truncation of the infinite Markov chain, the authors approximated the steady-state probability vector, within any desired degree of accuracy, whereas in \cite{tak}, diffusion approximations were applied. In \cite{nain,dim0, dim3} the PGF of the joint (relay) queue length distribution was obtained in terms of the solution of a boundary value problems. In \cite{WangTong2010,ProutiereITC2011} fluid models were used to investigate the delay analysis of random access networks. In \cite{szpa0} bounds for the queueing delay in a random access network with $N>2$ nodes were also derived. 
	
	Note that, in the vast majority of the above mentioned literature, each user node in the network has its own dedicated traffic. Alternatively, the inclusion of cooperation under certain criteria  gives rise to the introduction of load balancing techniques that forward the packets to specific relays with ultimate goal the optimisation of the overall network performance; e.g., \cite{liu}.

	Load balancing schemes are used to improve scalability and overall system throughput in distributed systems. Such schemes improve the system's performance by dividing the work effectively among the participating nodes. Under certain conditions (identical servers, exponential service times, and service protocols that do not depend on the service requirement, such as the FCFS), the so-called join the shortest queue (JSQ) policy has several strong optimality properties: The JSQ policy minimises the overall mean delay among the class of load balancing policies that do not have any advance knowledge of the service requirements \cite{borst,ephre,mukherjee}.

	For small scale systems with two queues, the JSQ policy has been extensively studied. The JSQ policy was introduced in \cite{haight}, in the context of two parallel (exponential) servers and a Poisson stream of arrivals. The first major steps towards its exact analysis were made in \cite{king0,flat}, using a uniformization approach that established that the PGF of the joint equilibrium distribution of the queue lengths is meromorphic (i.e., the equilibrium distribution can be written as a linear combination - finite or infinite - of product-form terms). For an extensive treatment of the JSQ system using a generating function approach, the interested reader is referred to  \cite{cohenboxma,fayolle}. An alternative approach that is not based on generating functions is the compensation approach, that directly solves the balance equations and obtains the equilibrium distribution in the form of a linear combination - finite or infinite - of product-form terms, see \cite{ad0,ad1}. It is worth noting, that although there exist multiple analyses for the JSQ system, this line of work is very challenging, see \cite{Onno} for a comparison of some of the approaches and an exposition of their strengths and limitations. Using the exact analysis for the case of two servers, Gupta et al. \cite{Mor} obtained the first approximate analysis of the marginal equilibrium distribution of the queue lengths in a server farm with JSQ routing protocol, processor sharing service discipline and generally distributed service times.
	
	A mainly numerical approach, that has been successfully used for among others JSQ systems, is the power-series algorithm (PSA), \cite{blanc1987,blanc1992}, which assumes that the equilibrium distribution can be written as a function of a system parameter (such as the load). 
	In a different direction, the precedence relation (PR) method, see, e.g., \cite{Houtum}, can be used to systematically construct bounds for any performance measure of the Markov chain under consideration. 
	The strength of the PSA and the PR methods is that they are not restricted to two queues, but apply equally well to more than two queues. A weak point, however, is that the theoretical foundation of PSA is still incomplete and that PR produces no exact results, but bounds.

	For large scale systems, the exact analysis is elusive and the vast majority of the literature focuses on asymptotic regimes, and heavy-traffic delay optimality in steady state. 
	The most common asymptotic regimes include fluid limits, heavy traffic limits, mean field limits and the Halfin-Whitt (quality-efficiency-driven) regime. Under the assumption of a Poisson arrival stream with rate $\lambda$, $N$ identical (exponential) servers, each with service rate $\mu$, the above regimes can be viewed as different limits of the quantity $\lambda-N\mu$, after appropriately rescaling by some function of $N$. See the seminal paper of Foschini and Salz \cite{Foschini} for the case $N=2$, and \cite{Badonnel} for the mean field limit analysis of a large underloaded parallel server model.


	\subsection{Contribution}
	\paragraph{Application oriented contribution.} In this work, we consider a slotted-time relay-assisted cooperative communication system with a JSQ routing protocol at the relays and collisions. Such models have not been extensively analysed and very little is known regarding their delay performance. In particular, we provide insights on the characterisation of the delay and the performance of the system at hand. This is a particularly difficult task even in small scale systems, due to the strong interdependence/interaction between the queues. Furthermore, we demonstrate how to obtain the equilibrium distribution of the joint queue lengths and present an extensive numerical comparison of the techniques implemented, discussing which performs better in terms of accuracy and time complexity (computational time).

	\paragraph{Fundamental contribution.} In this work, we extend for the first time the framework of the compensation approach to a new class of random walks, viz., that violates one of the main assumptions of the compensation approach that of transitions being allowed only to neighbouring states. We show, using the system at hand as a vehicle for illustration, that the compensation approach can be extended to random walks in the quadrant that obey the following conditions:
	\begin{itemize}
		\item Homogeneity: The same transitions occur according to the same rates for all interior points, and similarly for all points on the horizontal boundary, and for all points on the vertical boundary.
		\item Forbidden steps: No transitions from interior states to the North, North-East, and East.
		\item  Bounded transitions: Only transitions to a bounded region.
	\end{itemize}
	Note that the compensation approach was developed to satisfy a more restrictive setting than the {\em bounded transitions}, by allowing only transition to the nearest neighbours, see, e.g., \cite{ad0,ad1,ad5}. This is a very promising result, since it seems to be possible to extend the compensation approach to random walks with large steps (extending the results obtained in \cite{adan2013}) and to queueing systems with (bounded) batch arrivals and/or (bounded) batch departures.

	\subsection{Paper structure}
	The paper is structured as follows: In Section \ref{Themodel}, we describe the system under consideration in detail, and provide its stability condition. The computation of the equilibrium distribution of the joint queue lengths using the compensation approach is performed in Section \ref{Sec4}.  In Section \ref{Sec:PSA}, we apply  PSA, while in Section \ref{gen}, we provide a detailed analysis on how to derive the probability generating function of the equilibrium joint queue length distribution in terms of a solution of a Riemann-Hilbert boundary value problem. Section \ref{Sec:num} is devoted to the comparison of the compensation approach and  PSA method. Finally, we present conclusions and possible generalisations in Section \ref{Sec:con}.

	\section{The model}\label{Themodel}
	We consider a relay-assisted cooperative random access wireless network composed of a saturated source user, that transmits packets to a common destination node, under the cooperation of two relay nodes. The relays are equipped with infinite capacity buffers (queues), and they assist the user by transmitting the packets that failed to reach the destination. Packets have equal length (i.e., they consist of the same fixed number of bits \cite{chan}), and time is divided into slots corresponding to the transmission time of a packet. We consider an Early Arrival System (EAS), under which at the beginning of a slot packets arrive and they are routed to the relays according to the join the shortest relay queue (JSRQ) policy. On the other hand, departures are scheduled at the end of the slot. 
	
	Packet arrivals are assumed i.i.d. Bernoulli random variables from slot to slot, with the average number of arrivals being $\lambda$ packets per slot. Upon the arrival of a packet, the source and the relays cooperate as follows: When the source transmits a packet, it is forwarded to the least loaded relay, i.e., to the relay with the smallest number of backlogged packets. Then, the relay node sends an acknowledgement to the source and takes over the responsibility
	of delivering the packet to the destination node by storing it in its queue. Such a protocol helps to keep a fair balance among the relays, as well as, it enhances the energy conservation of the relays (the relay node is usually a battery operated wireless device). In case the numbers of packets in the relay queues are equal, a packet is routed to relay $r$ with probability $\pi_{r}$, $r=1,2$. At the end of each slot, relay $r$ (if it is non-empty) transmits a packet to the destination node with probability $a_{r}$, $r=1,2$. If both relays transmit at the same slot, a collision occurs, and both packets have to be retransmitted in a later slot. If only one relay transmits, then the destination node successfully decodes it, sends an acknowledgement to the corresponding relay, and the packet exits the network. The acknowledgements are assumed to be error-free, instantaneous, and broadcasted to all relevant nodes. The nodes remove the successfully transmitted packets from their queues, while unsuccessful packets are retained. 
	
	In order to enhance the readability of the paper, and only for this reason, we focus hereon at the symmetric system, under which $a_{r}=a$, $r=1,2$, i.e., both relays have identical transmission parameters, and $\pi_{r}=1/2$, $r=1,2$. Note that the analysis that follows can be directly generalised to the asymmetric system case, however this would render the notation more complicated, which would severely impact the readability of the paper.

	Let $Q_{r}(n)$ be the number of stored packets at the buffer of relay $r$, $r=1,2$, at the beginning of the $n$-th slot, $n \geq 0$. Then $\{\bm{Q}(n),\, n \geq 0\}:=\{(Q_{1}(n),Q_{2}(n)),\, n \geq 0\}$ is a discrete time Markov chain with state space $\mathcal{S}=\{(i,j):\, i,j\geq0\}$. The corresponding probability transition diagram is depicted in Figure \ref{rw}. 
	\begin{figure}[h!]
		\centering	\includegraphics[scale= 0.9]{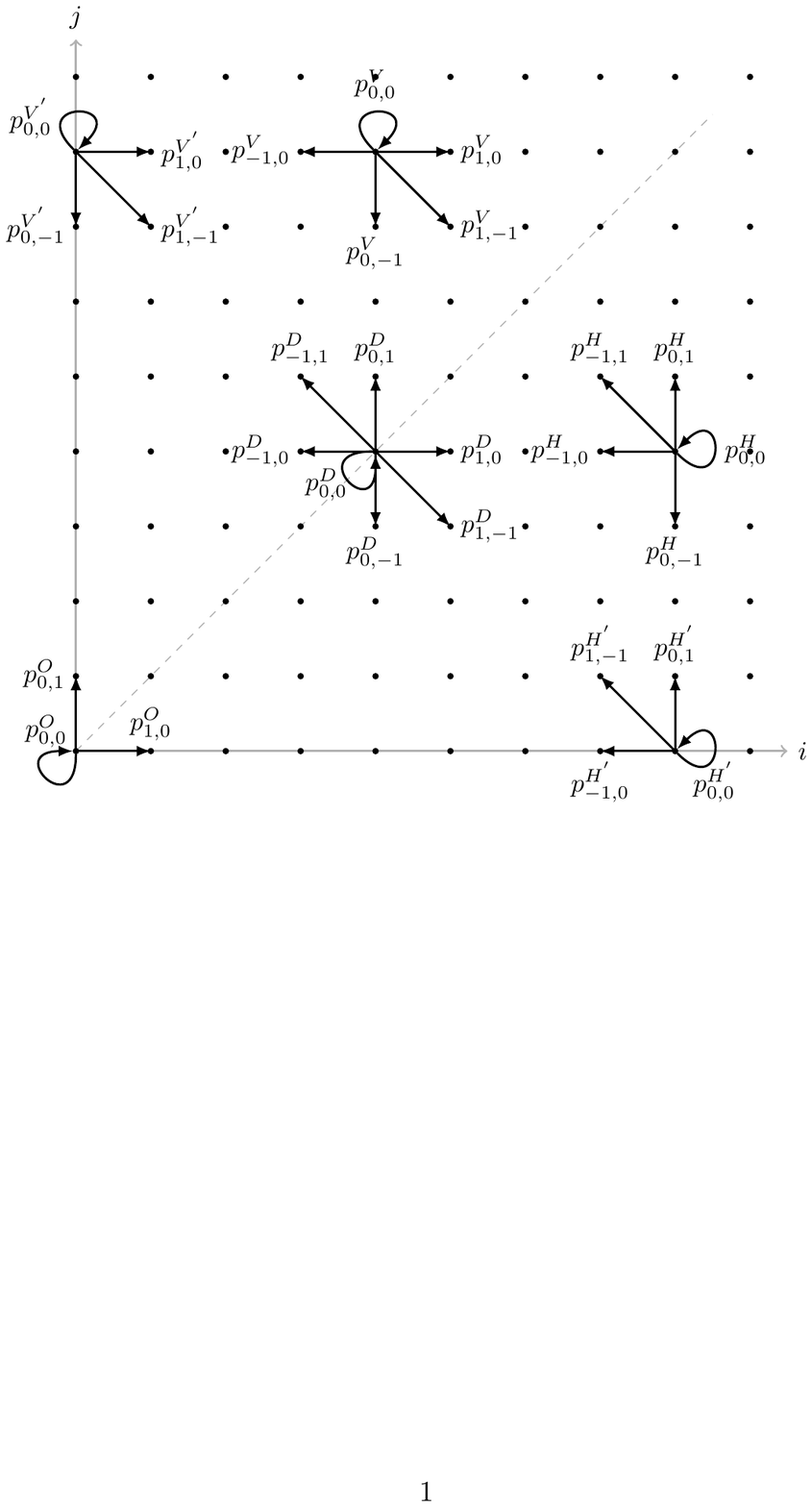}
		\caption{The transition probability diagram of $\{\bm{Q}(n),\, n \geq 0\}$ for a few representative states}
		\label{rw}
	\end{figure}
	
	From Figure \ref{rw}, it is evident that $\{\bm{Q}(n),\, n \geq 0\}$ has six regions of spatial homogeneity. Namely, two angles: upper-diagonal angle $H=\{(i,j):i>j>0\}$, and lower-diagonal angle $V=\{(i,j):j>i>0\}$; three rays: horizontal ray $H^{\prime}=\{(i,0):i>0\}$, vertical ray $V^{\prime}=\{(0,j):j>0\}$, and diagonal ray $D=\{(i,j):j=i>0\}$; and the origin point $O=(0,0)$. These six regions govern the one step transition probabilities, say $$p^{L}_{i',j'}=\mathbb{P}\left[\bm{Q}(n+1)=(i+i',j+j')\, |\, \bm{Q}(n)=(i,j)\in L\right], ~L\in\{H,V,H^{\prime},V^{\prime},D,O\}, ~i',j'=0,\pm 1,\pm2,$$
	\begin{enumerate}
		\item For $(i,j)\in \{H,V\}$,
		\begin{align}
		p^V_{1,0}&=p^H_{0,1}=\lambda(\bar{a}^{2}+a^{2}),\,p^V_{0,-1}=p^H_{0,-1}=\bar{\lambda}a\bar{a},\,p^V_{-1,0}=p^H_{-1,0}=\bar{\lambda}a\bar{a},\,p^V_{1,-1}=p^H_{-1,1}=\lambda a\bar{a}, \nonumber \\p^V_{0,0}&=p^H_{0,0}=\bar{\lambda}(\bar{a}^{2}+a^{2})+\lambda a\bar{a},\label{TProb1}
		\end{align}
		with $\bar{a}=1-a$ and $\bar{\lambda}=1-\lambda$.
		\item For $(i,j)\in \{H^{\prime},V^{\prime}\}$,
		\begin{align}
		p^{V^{'}}_{1,0}=p^{H^{'}}_{0,1}=\lambda(\bar{a}^{2}+a^{2}),\,p^{V^{'}}_{0,-1}=p^{H^{'}}_{-1,0}=\bar{\lambda}a,\,p^{V^{'}}_{1,-1}=p^{H^{'}}_{-1,1}=\lambda a\bar{a},\,p^{V^{'}}_{0,0}=p^{H^{'}}_{0,0}=\bar{\lambda}\bar{a}+\lambda a\bar{a}.\label{TProb2}
		\end{align}
		\item For $(i,j)\in D$,
		\begin{align}
		p^D_{1,0}&=p^D_{0,1}=\frac{1}{2}\lambda(\bar{a}^{2}+a^{2}),\,p^D_{0,-1}=p^D_{-1,0}=\bar{\lambda}\bar{a}a,p^D_{0,0}=\bar{\lambda}(\bar{a}^{2}+a^{2})+\lambda a\bar{a}, \nonumber \\
		p^D_{1,-1}&=p^D_{-1,1}=\frac{1}{2}\lambda\bar{a}a.\label{TProb3}
		\end{align}
		\item For $(i,j)\in O$,
		\begin{align}
		p^O_{0,1}&=p^O_{1,0}=\frac{1}{2}\lambda\bar{a},\,p^O_{0,0}=\bar{\lambda}+\lambda a.\label{TProb4}
		\end{align}
	\end{enumerate}

	\begin{remark}
		To better understand how the above probabilities are computed, we consider for example the case of $p_{1,0}^V$. This probability captures the transition from a state $(i,j) \in V$ to a state $(i+1,j)$. In this case, at the beginning of a slot, due to the EAS system, with probability $\lambda$ there is a new arrival. For the transition $(i,j) \to (i+1,j)$ to occur it is necessary on top of the arrival that no departure occurs. The latter event happens with probability $\bar{a}^2+a^2$. Thus, $p_{1,0}^V = \lambda(\bar{a}^2+a^2)$. The other transition probabilities are obtained in a similar fashion.
	\end{remark}
	\begin{remark}
		Note that if both relay queues are non empty, then the successful transmission rate from each of them equals $\bar{a}a$. If one of them is empty then the other transmits with rate $a$. This demonstrates that  the setting under consideration is a non-work-conservative setting. The only exception is the case   $2\bar{a}a=a  \iff a = 1/2 = \bar{a}$ (or equivalently the case $2\bar{a}a=\bar{a}$). Due to the non-work conservation setting, our model incorporates two features, that of the JSRQ and that of the ``coupled processors" \cite{fay}. The combination of the JSRQ feature and the coupled processor feature considerably complicates the analysis.
	\end{remark}

	\subsection{Stability condition}
	Let $(\mathbb{E}_{x}^{k},\mathbb{E}_{y}^{k})$ denote the mean jump vector of $\{\bm{Q}(n),\,n \geq 0\}$ in the angles $k=H,V$ or in the rays $k=H^{\prime},V^{\prime},D$. Then, it is readily derived that
	\begin{align*}
	\mathbb{E}_{x}^{H}&=\mathbb{E}_{y}^{V}=-(\lambda a\bar{a}+\bar{\lambda}a\bar{a})=-a\bar{a}<0,\\
	\mathbb{E}_{y}^{H}&=\mathbb{E}_{x}^{V}=\lambda(a^{2}+\bar{a}^{2})+\lambda a\bar{a}-\bar{\lambda}a\bar{a}=\lambda-a\bar{a},\\
	\mathbb{E}_{x}^{H^\prime}&=\mathbb{E}_{y}^{V^\prime}=-a(\bar{\lambda}+\lambda\bar{a})<0,\\
	\mathbb{E}_{y}^{H^\prime}&=\mathbb{E}_{x}^{V^\prime}=\lambda(\bar{a}^{2}+a^{2}+a\bar{a}),\\
	\mathbb{E}^{D}_{x}&=\mathbb{E}^{D}_{y}=\frac{\lambda}{2}(a^{2}+\bar{a}^{2})-\bar{\lambda}a\bar{a}=\frac{1}{2}(\lambda-2a\bar{a}).
	\end{align*} 
	Note that $(\mathbb{E}^{D}_{x},\mathbb{E}^{D}_{y})=\frac{1}{2}(\mathbb{E}_{x}^{H}+\mathbb{E}_{x}^{V},\mathbb{E}_{y}^{H}+\mathbb{E}_{y}^{V})$ and that $\mathbb{E}_{x}^{H}+\mathbb{E}_{y}^{H}=\mathbb{E}_{x}^{V}+\mathbb{E}_{y}^{V}=\lambda-2a\bar{a}$. Following the analysis in \cite{kurk}, we prove the following theorem.
	\begin{theorem}\label{thm1}
		The system at hand is stable if and only if
		\begin{equation}
		\mathbb{E}_{x}^{H}<0,\ 	\mathbb{E}_{y}^{V}<0,\
		\mathbb{E}_{x}^{H}+\mathbb{E}_{y}^{H}=\mathbb{E}_{x}^{V}+\mathbb{E}_{y}^{V}=\lambda-2a\bar{a}<0.
		\label{ui}
		\end{equation}
		Equivalently, the stability condition for the system can be written in terms of the system load
		\begin{equation}\label{Eq:Stab}
		\rho = \frac{\lambda(\bar{a}^{2}+a^{2})}{2\bar{\lambda}\bar{a}a}<1.
		\end{equation}
	\end{theorem}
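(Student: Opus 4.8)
The plan is to view $\{\bm{Q}(n),\,n\ge 0\}$ as a member of the class of partially homogeneous, nearest-neighbour random walks on $\mathbb{Z}_+^2$ whose positive recurrence is classified in \cite{kurk} in terms of the mean jump vectors, and to apply that classification. The first step is to check that the hypotheses are met: from \eqref{TProb1}--\eqref{TProb4} the chain is irreducible and aperiodic on $\mathcal{S}$, all one-step transitions are to nearest neighbours ($i',j'\in\{0,\pm1\}$), the transition law is constant on each of the six homogeneity regions $H,V,H^{\prime},V^{\prime},D,O$, and the relevant drifts are non-degenerate for $a\in(0,1)$ and $\lambda\in(0,1)$ (in particular $\mathbb{E}^H_x=\mathbb{E}^V_y=-a\bar a\neq 0$ and $\mathbb{E}^H_x-\mathbb{E}^H_y=\mathbb{E}^V_y-\mathbb{E}^V_x=-\lambda\neq 0$). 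Once this is in place, the criterion of \cite{kurk} delivers \eqref{ui}; the reformulation \eqref{Eq:Stab} is then pure algebra, since $a+\bar a=1$ gives $\lambda-2a\bar a<0 \iff \lambda(a+\bar a)^2<2a\bar a \iff \lambda(\bar a^2+a^2)<2a\bar a-2\lambda a\bar a=2\bar\lambda\bar a a \iff \rho<1$.

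I would also spell out the mechanism behind \eqref{ui}, since it indicates where the work lies. In the angle $H$ the mean drift of $Q_1(n)-Q_2(n)$ is $\mathbb{E}^H_x-\mathbb{E}^H_y=-\lambda<0$, and symmetrically in $V$, so from either side the walk is attracted toward the diagonal ray $D$ and cannot escape to infinity through the interior of $H$ or $V$ without first reaching $D$. On $D$ itself---and, by the identity recorded just before the theorem, on all interior faces---the mean drift of the total $Q_1(n)+Q_2(n)$ equals $\mathbb{E}^H_x+\mathbb{E}^H_y=\mathbb{E}^V_x+\mathbb{E}^V_y=\lambda-2a\bar a$, so near the diagonal the process behaves like a one-dimensional random walk with that drift; and on the axis faces $H^{\prime},V^{\prime}$ the transversal drifts $\mathbb{E}^{H^{\prime}}_y=\mathbb{E}^{V^{\prime}}_x>0$ steer the walk back into the interior, so those faces are not ergodic traps. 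Hence positive recurrence is equivalent to the angles being inward-pointing ($\mathbb{E}^H_x<0$, $\mathbb{E}^V_y<0$, which holds automatically here) together with the diagonal drift pointing at the origin ($\lambda-2a\bar a<0$), that is, \eqref{ui}. The same conclusion can also be reached through the associated fluid model, which leaves the axis rays instantly, is driven to the diagonal in finite time, and then slides along it toward the origin exactly when $\lambda-2a\bar a<0$; stability of the fluid model then yields positive recurrence and its instability yields transience/null recurrence by standard arguments \cite{stol2,foss1,foss2}.

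The step I expect to be the main obstacle is that this heuristic does not translate into an elementary Foster-Lyapunov argument with the obvious test function: because the system is non-work-conserving, the total $Q_1+Q_2$ has positive drift on the axis rays, while any test function that penalises the distance $|Q_1-Q_2|$ to the diagonal has positive increments at every step off $D$, so no single combination $(i+j)+c\,|i-j|$ has negative one-step drift simultaneously on $D$ and on $H^{\prime},V^{\prime}$ for all parameters in the stability region. What \cite{kurk} supplies---and what I would need to invoke with care---is a classification that treats the diagonal interface and the boundary faces jointly; the real labour lies in verifying that its non-degeneracy hypotheses hold for the walk at hand, the geometry of Figure \ref{rw} and the computed vectors $(\mathbb{E}^k_x,\mathbb{E}^k_y)$ being precisely the data in which those hypotheses are phrased.
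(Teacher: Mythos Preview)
Your proposal is correct in outline but takes a different route from the paper. You delegate the classification to \cite{kurk} as a black box, verify its hypotheses, and then argue heuristically (and via fluid limits) why \eqref{ui} is the right condition, while explicitly doubting that an elementary Foster--Lyapunov argument is available. The paper does precisely what you judged to be the main obstacle: it gives a direct Foster proof. For sufficiency it uses the test function $f(x,y)=\sqrt{x^{2}+y^{2}}$ (not a linear combination of $i+j$ and $|i-j|$); in the angle $H$ one gets
\[
\mathbb{E}\bigl(f(x+\theta_{x},y+\theta_{y})-f(x,y)\bigr)\le \frac{y(\mathbb{E}_{x}^{H}+\mathbb{E}_{y}^{H})}{y\sqrt{2}}+o(1)<-\epsilon_{0},
\]
and on the axis ray $H'$ only the tangential component $\mathbb{E}_{x}^{H'}=-a(\bar{\lambda}+\lambda\bar{a})<0$ enters at leading order, so the positive drift of $Q_{1}+Q_{2}$ there is irrelevant. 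The diagonal is handled via the identity $(\mathbb{E}^{D}_{x},\mathbb{E}^{D}_{y})=\tfrac{1}{2}(\mathbb{E}^{H}_{x}+\mathbb{E}^{V}_{x},\mathbb{E}^{H}_{y}+\mathbb{E}^{V}_{y})$. For necessity the paper takes $f(x,y)=x+y$ and checks non-negative drift region by region. So your observation that ``no single combination $(i+j)+c\,|i-j|$ has negative one-step drift simultaneously on $D$ and on $H',V'$'' is true for \emph{linear} test functions but moot: the Euclidean norm sidesteps the non-work-conserving boundary issue. Your approach buys generality and avoids the case analysis; the paper's buys a self-contained argument that does not rely on checking the fine print of \cite{kurk}.
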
 
	\begin{proof} The proof consists of two parts: in the first part, we show that the stated condition is sufficient and in the second part, we show that the stated condition is necessary.
		\begin{description}
			\item{\bf{Sufficiency.}} To this purpose, we use Foster's criterion \cite{fayolle}:
			A Markov chain is ergodic provided that there exists a positive function $f(x,y)$ on $\mathbb{Z}_{+}^{2}$, a number $\epsilon>0$, and a finite set $A\in \mathbb{Z}_{+}^{2}$ such that
			\begin{equation}
			\mathbb{E}(f(x+\theta_{x},y+\theta_{y})-f(x,y))<-\epsilon,\,(x,y)\in \mathbb{Z}_{+}^{2}\smallsetminus A,
			\label{dc}
			\end{equation}
			with $(\theta_{x},\theta_{y})$ a random vector distributed as the one step jump of the Markov chain from state $(x,y)$.
			
			Assume that \eqref{ui} holds and consider the function $f(x,y)=\sqrt{x^{2}+y^{2}}$, which satisfies,
			\begin{equation*}
			\mathbb{E}(f(x+\theta_{x},y+\theta_{y})-f(x,y))=\frac{x\mathbb{E}(\theta_{x})+y\mathbb{E}(\theta_{y})}{f(x,y)}+o(1), \,\,as \,\, x^{2}+y^{2}\to\infty.
			\end{equation*}
			If $x>y>0$, $\mathbb{E}(\theta_{x})=\mathbb{E}_{x}^{H}<0$, and $\mathbb{E}(\theta_{x})+\mathbb{E}(\theta_{y})=\mathbb{E}_{x}^{H}+\mathbb{E}_{y}^{H}=\lambda-2a\bar{a}<0$, and 
			\begin{equation}
			\mathbb{E}(f(x+\theta_{x},y+\theta_{y})-f(x,y))\leq\frac{y(\mathbb{E}_{x}^{H}+\mathbb{E}_{y}^{H})}{y\sqrt{2}}+o(1)<-\epsilon_{0},
			\label{th1}
			\end{equation}
			for some $\epsilon_{0}>0$, and for all $(x,y):x^{2}+y^{2}\to\infty.$
			
			Assume now that $x>0$, $y=0$. Then since $\mathbb{E}_{x}^{H^\prime}<0$,
			\begin{equation*}
			\mathbb{E}(f(x+\theta_{x},y+\theta_{y})-f(x,y))\leq\frac{x\mathbb{E}_{x}^{H^\prime}}{x}+o(1)<-\epsilon_{1},
			\end{equation*}
			for some $\epsilon_{1}>0$, and for all $x$ sufficiently large.
			
			The case $y>x$ is symmetric to $x>y$ and further details are omitted. For $x=y>0$, since $(\mathbb{E}^{D}_{x},\mathbb{E}^{D}_{y})=\frac{1}{2}(\mathbb{E}_{x}^{H}+\mathbb{E}_{x}^{V},\mathbb{E}_{y}^{H}+\mathbb{E}_{y}^{V})$, we can check \eqref{dc} using again \eqref{th1}. Then, Foster's criterion applies and the chain is ergodic. 
			\item{\bf{Necessity.}} It is sufficient to show that there exists a function $f(x,y)$ on $\mathbb{Z}_{+}^{2}$ and a constant $c>0$ such that
			\begin{equation}
			\mathbb{E}(f(x+\theta_{x},y+\theta_{y})-f(x,y))\geq0,\,(x,y):\,f(x,y)>c.
			\label{dc1}
			\end{equation}
			
			Let $\mathbb{E}_{x}^{H}+\mathbb{E}_{y}^{H}=\mathbb{E}_{x}^{V}+\mathbb{E}_{y}^{V}=\lambda-2a\bar{a}\geq0$. Set $f(x,y)=x+y$. If $x,y>0$,
			\begin{equation*}
			\mathbb{E}(f(x+\theta_{x},y+\theta_{y})-f(x,y))=\mathbb{E}_{x}^{H}+\mathbb{E}_{y}^{H}\geq0.
			\end{equation*}
			If $x>0$, $y=0$,
			\begin{equation*}
			\mathbb{E}(f(x+\theta_{x},y+\theta_{y})-f(x,y))=\mathbb{E}_{x}^{H^\prime}+\mathbb{E}_{y}^{H^\prime}=\lambda(\bar{a}^{2}+a^{2})-\bar{\lambda}a>0.
			\end{equation*}
			Similarly, if $x=0$, $y>0$,
			\begin{equation*}
			\mathbb{E}(f(x+\theta_{x},y+\theta_{y})-f(x,y))=\mathbb{E}_{x}^{V^\prime}+\mathbb{E}_{y}^{V^\prime}=\lambda(\bar{a}^{2}+a^{2})-\bar{\lambda}a>0.
			\end{equation*}
			Thus, $f(x,y)$ satisfies \eqref{dc1} and the chain is non-ergodic.
		\end{description}\vspace{-1cm}
	\end{proof}
	
\begin{remark}
		Theorem \ref{thm1} can be generalised  to the asymmetric case following \cite{kurk}. This analysis would reveal that the stability condition is $\lambda-a_1(1-a_2)-(1-a_1)a_2<0$, or equivalently in terms of the system load
		$$ \f{\l(a_1a_2 + (1-a_1)(1-a_2)}{(1-\l)(a_1(1-a_2) + a_2(1-a_1))} < 1.$$
	\end{remark}
	
	\section{Equilibrium analysis: compensation approach}
	\label{Sec4}
	
	The compensation approach is developed by Adan et al. in a series of papers \cite{ad0,ad1,ad5} and aims at a direct solution for the equilibrium joint queue length distribution, for a sub-class of two-dimensional random walks on the lattice of the first quadrant obeying the following conditions:
	\begin{itemize}
		\item Homogeneity: The same transitions occur according to the same rates for all interior points, and similarly for all points on the horizontal boundary, and for all points on the vertical boundary.
		\item Forbidden steps: No transitions from interior states to the North, North-East, and East.
		\item  Step size: Only transitions to neighbouring states.
	\end{itemize}
	It exploits the fact that the balance equations in the interior of the quarter plane are satisfied by a linear combination (finite or infinite) of product-form terms, the parameters of which satisfy a kernel equation, and that need to be chosen such that the balance equations on the boundaries are satisfied as well. As it turns out, this can be done by alternatingly compensating for the errors on the two boundaries, which eventually leads to an infinite series of product-forms.

	As evident from Figure \ref{rw}, the model at hand violates the first two conditions mentioned above. In order to apply the compensation approach, it is necessary to transform the state space (similarly as in the case of  the classical JSQ model \cite{ad0,ad1}). More concretely, we employ the following transformation 
	\begin{displaymath}
	\tilde{Q}_{1}(n)= \min\left\{Q_{1}(n),Q_{2}(n)\right\},\,\tilde{Q}_{2}(n)= \left|Q_{2}(n)-Q_{1}(n)\right|.
	\end{displaymath} 
	Clearly, $\{\tilde{\bm{Q}}(n),\,n \geq 0\}:=\{(\tilde{Q}_{1}(n),\tilde{Q}_{2}(n)),\,n \geq 0\}$ is a discrete time Markov chain with state space $\tilde{\mathcal{S}}=\{(k,l):\,k,l\geq0\}$. The corresponding probability transition diagram is depicted in Figure \ref{Trw}.
	
	\begin{figure}[h!]
		\centering	\includegraphics[scale= 0.9]{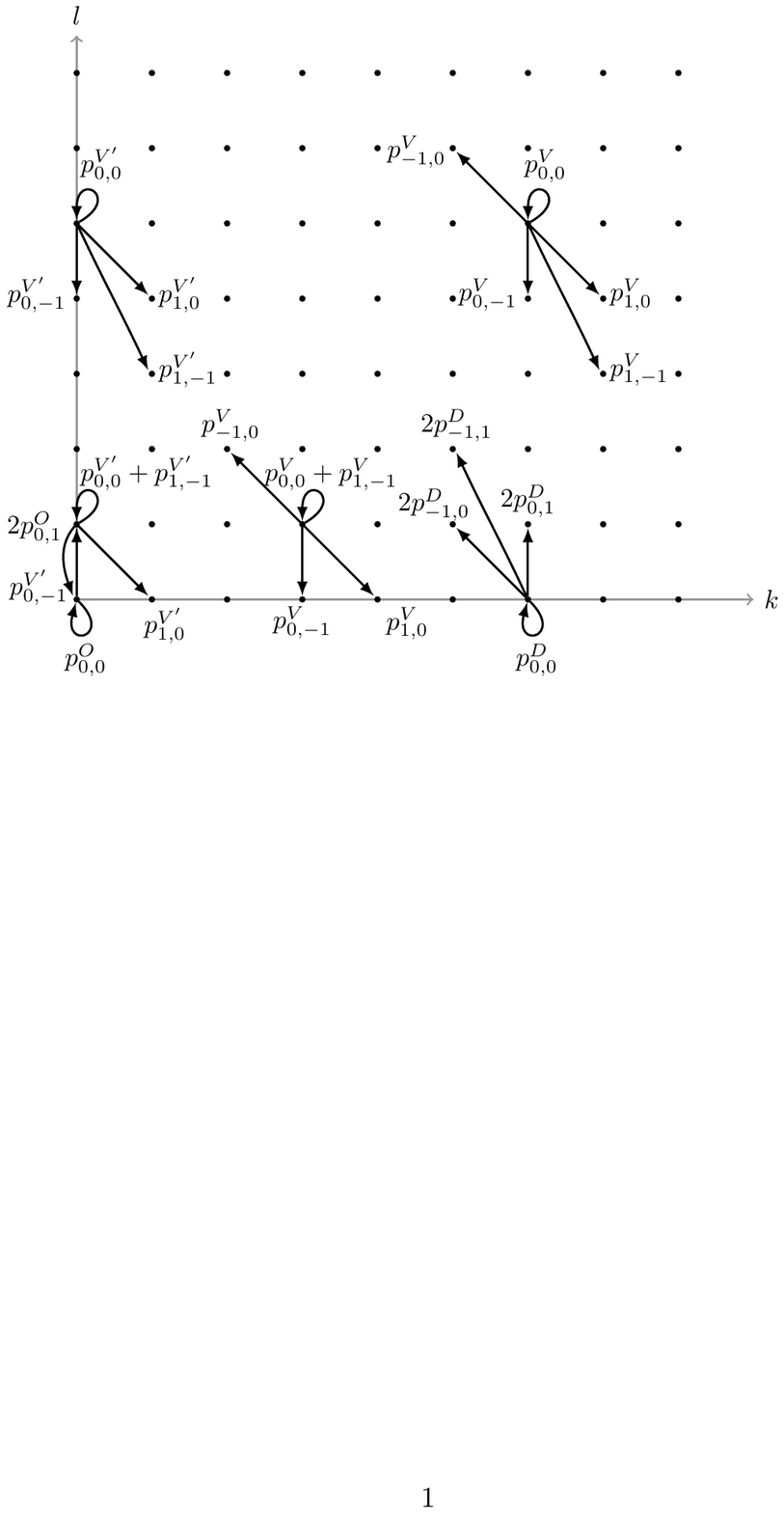}
		\caption{The probability transition  diagram of $\{\tilde{\bm{Q}}(n),\, n \geq 0\}$ for a few representative states}
		\label{Trw}
	\end{figure}

	Note that the above transformation has led to a random walk that only violates the nearest neighbour condition for the application of the compensation approach, but the new random walk has bounded transitions, see Figure \ref{Trw}. Despite violating the nearest neighbour condition, we show, in this paper, that the compensation approach can still be applied and lead to an equilibrium joint queue length distribution expressed in the form of a linear combination (infinite) of product-form terms.
	
	Let 
	\begin{displaymath}
	\pi_{k,l}=\lim_{n\to\infty}\mathbb{P}(\tilde{Q}_{1}(n)=k,\tilde{Q}_{2}(n)=l)
	=\lim_{n\to\infty}\mathbb{P}(\min\{{Q}_1(n),{Q}_2(n)\}=k, |{Q}_1(n)-{Q}_2(n)|=l), \ k,l\geq0,
	\end{displaymath}
	denote the equilibrium joint queue length distribution of the transformed random walk $\{\tilde{\bm{Q}}(n),\,n \geq 0\}$.
	
	The balance equations read as follows
	\begin{align}
	\pi_{k,l}=&\pi_{k,l}(\bar{\lambda}(\bar{a}^{2}+a^{2})+\lambda a\bar{a})+\pi_{k,l+1}\bar{\lambda}a\bar{a}+\pi_{k-1,l+1}\lambda(\bar{a}^{2}+a^{2})\nonumber\\
	&+\pi_{k-1,l+2}\lambda\bar{a}a+\pi_{k+1,l-1}\bar{\lambda}\bar{a}a,\,\,\,k\geq1,\,l\geq3,\label{in}\\
	\pi_{k,0}=&\pi_{k,0}(\bar{\lambda}(\bar{a}^{2}+a^{2})+\lambda a\bar{a})+\pi_{k,1}\bar{\lambda}a\bar{a}+\pi_{k-1,1}\lambda(\bar{a}^{2}+a^{2})+\pi_{k-1,2}\lambda a\bar{a},\,\,\,k\geq1,\label{hor1}\\
	\pi_{k,1}=&\pi_{k,1}(\bar{\lambda}(\bar{a}^{2}+a^{2})+2\lambda a\bar{a})+\pi_{k,2}\bar{\lambda}a\bar{a}+\pi_{k-1,2}\lambda(\bar{a}^{2}+a^{2})+\pi_{k-1,3}\lambda a\bar{a} \nonumber \\ 
	&+\pi_{k,0}\lambda(\bar{a}^{2}+a^{2})+\pi_{k+1,0}2\bar{\lambda}a\bar{a},\,\,\,k\geq1,\label{hor2}\\
	\pi_{k,2}=&\pi_{k,2}(\bar{\lambda}(\bar{a}^{2}+a^{2})+\lambda a\bar{a})+\pi_{k,3}\bar{\lambda} a\bar{a}+\pi_{k-1,3}\lambda(\bar{a}^{2}+a^{2}) \nonumber \\
	&+\pi_{k-1,4}\lambda a\bar{a}+\pi_{k+1,0}\lambda a\bar{a}+\pi_{k+1,1}\bar{\lambda}a\bar{a},\,\,\,k\geq1,
	\label{hor3}\\
	\pi_{0,l}=&\pi_{0,l}(\bar{\lambda}\bar{a}+\lambda a\bar{a})+\pi_{0,l+1}\bar{\lambda}a+\pi_{1,l-1}\bar{\lambda}a\bar{a},\,\,\,\,l\geq 3,\label{ver}\\
	\pi_{0,2}=&\pi_{0,2}(\bar{\lambda}\bar{a}+\lambda a\bar{a})+\pi_{0,3}\bar{\lambda}a+\pi_{1,1}\bar{\lambda}a\bar{a}+\pi_{1,0}\lambda a\bar{a},\label{RB1}\\
	\pi_{0,1}=&\pi_{0,1}(\bar{\lambda}\bar{a}+2\lambda a\bar{a})+\pi_{0,2}\bar{\lambda}a+\pi_{1,0}2\bar{\lambda}a\bar{a}+\pi_{0,0}\lambda\bar{a},\label{RB2}\\
	\pi_{0,0}=&\pi_{0,0}(\bar{\lambda}+\lambda a)+\pi_{0,1}\bar{\lambda}a.\label{RB3}
	\end{align}

	\subsection{Decay rate\label{section_Decay_Rate}}
	From the exact analysis of the system, see Section \ref{CA} and the result therein, it turns out that, although the model at hand combines both the JSRQ feature and the coupled processor feature, the dominating feature is that of the JSRQ. This is already noticeable in the decay result of the following proposition, indicating a behaviour similar to that of the classical JSQ, cf. \cite{ad0}.
	\begin{proposition}\label{propDecay} 
		For $\rho<1$,
		\begin{equation} 
		\lim_{k\rightarrow\infty}\rho^{-2k}\mathbb{P}(\min\{{Q}_1,{Q}_2\}=k, |{Q}_1-{Q}_2|=l)=c\,d_0\, \delta^l,
		\end{equation}
		with $c,d_0, \delta$ constants that do not depend on $k$.
	\end{proposition}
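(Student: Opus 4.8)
The plan is to deduce the proposition as a corollary of the explicit solution of the balance equations \eqref{in}--\eqref{RB3} obtained by the compensation approach in Section~\ref{CA}. That solution represents $\pi_{k,l}$, for every $k\ge1$ and every $l$ above a fixed finite threshold, as an absolutely convergent infinite linear combination of product forms,
$$\pi_{k,l}=c\sum_{i\ge0}d_i\,\alpha_i^{\,k}\,\beta_i^{\,l},$$
where $c$ is the normalising constant, each pair $(\alpha_i,\beta_i)$ solves the kernel equation obtained by substituting $\alpha^k\beta^l$ into \eqref{in},
$$\alpha\beta=\alpha\beta\bigl(\bar{\lambda}(\bar{a}^{2}+a^{2})+\lambda a\bar{a}\bigr)+\alpha\beta^{2}\bar{\lambda}a\bar{a}+\beta^{2}\lambda(\bar{a}^{2}+a^{2})+\beta^{3}\lambda\bar{a}a+\alpha^{2}\bar{\lambda}\bar{a}a,$$
and the terms are ordered so that $\alpha_0>|\alpha_1|\ge|\alpha_2|\ge\cdots$ (the construction may group the $\alpha_i$ into pairs sharing a common $\beta_i$, which is immaterial here). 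I would then (i) identify the dominant pair $(\alpha_0,\beta_0)$ and check that $\alpha_0=\rho^2$ and $\beta_0=:\delta\in(0,1)$; (ii) interchange the limit $k\to\infty$ with the summation; and (iii) read off the constant.

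For step (i), the compensation construction is seeded by the product form satisfying the interior equation \eqref{in} together with one of the two boundary equations, and each subsequent compensation step multiplies the $\alpha$-component of the current term by a factor of modulus strictly smaller than one; hence the seed carries the strictly largest $\alpha$. Substituting $\alpha=\rho^2$ (with $\rho$ as in \eqref{Eq:Stab}) into the kernel together with the pertinent boundary relation --- equivalently, inspecting the very first term generated in Section~\ref{CA} --- should confirm that this leading root is $\alpha_0=\rho^2$ and pin down its companion $\delta=\beta_0$; in the original coordinates this is the contribution of the event that both queues are large, $\pi_{k,l}\approx\mathbb{P}(Q_1\ge k)\,\mathbb{P}(Q_2\ge k)$, so that the JSRQ feature dominates the coupled-processor feature and reproduces the $\rho^2$ decay of the classical JSQ model, cf.~\cite{ad0}. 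For step (ii), I would write
$$\rho^{-2k}\pi_{k,l}=c\,d_0\,\beta_0^{\,l}+c\sum_{i\ge1}d_i\Bigl(\tfrac{\alpha_i}{\alpha_0}\Bigr)^{k}\beta_i^{\,l},$$
and use $|\alpha_i/\alpha_0|\le r<1$ for $i\ge1$ together with the absolute convergence of $\sum_{i}|d_i|\,|\beta_i|^{\,l}$ from Section~\ref{CA}: the $i\ge1$ part is then bounded in modulus by $r^{k}\sum_{i\ge1}|d_i|\,|\beta_i|^{\,l}\to0$, whence $\lim_{k\to\infty}\rho^{-2k}\pi_{k,l}=c\,d_0\,\delta^{l}$. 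For the finitely many small values of $l$ not covered by the interior representation, the same conclusion follows by feeding the already-established asymptotics of the neighbouring probabilities into the boundary equations \eqref{hor1}--\eqref{hor3}, which preserve the $\rho^{2k}$ order.

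The main obstacle is discharging the hypotheses used in step (ii): one must ensure that the compensation series converges absolutely on a region reaching the dominant product form and, crucially, that $\alpha_0$ is strictly separated in modulus from every other $\alpha_i$, so that multiplication by $\rho^{-2k}$ annihilates all compensation terms in the limit. Both facts are by-products of the convergence analysis performed for the series in Section~\ref{CA}; granted them, the proposition follows from the elementary interchange of limit and sum described above.
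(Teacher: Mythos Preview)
Your proposal is correct and is essentially the paper's own argument: the paper does not give a standalone proof of Proposition~\ref{propDecay} but derives it as a by-product of Theorem~\ref{main} together with Lemma~\ref{lem1s} (which pins down $\gamma_0=\rho^2$), Proposition~\ref{JSQR_prop4} (strict separation $|\gamma_0|>|\delta_0|>|\gamma_1|>\cdots$), and Proposition~\ref{prop_Abs_Conv} (absolute convergence), exactly the three ingredients you invoke for the limit interchange. Your treatment of the small-$l$ boundary cases via \eqref{hor1}--\eqref{hor3} is likewise in line with the paper, which handles $l=0,1$ through the separate representation \eqref{on2-final} whose leading term is again $d_0\gamma_0^k$.
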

	Proposition \ref{propDecay} can be intuitively understood by comparing the transformed model with the corresponding $Geo/Geo/2$ model with one queue, Bernoulli arrivals with success probability $\lambda$, two identical servers, each with Bernoulli service with success probability $\bar{a}a$. Let ${Q}_1$ and ${Q}_2$ denote the equilibrium queue lengths in the original random walk, and let ${Q}$ denote the equilibrium queue length in the $Geo/Geo/2$ model with one queue. It is then expected that $\mathbb{P}({Q}_1+{Q}_2=k)$ and $\mathbb{P}({Q}=k)$ have the same decay rate, since both systems will work at full capacity whenever the total number of customers grows large. Moreover, since the JSRQ protocol constantly aims at balancing the lengths of the two queues over time, it is expected that, for large values of $k$, 
	\begin{equation}
	\label{h1} 
	\mathbb{P}(\min\{{Q}_1,{Q}_2\}=k)\approx\mathbb{P}({Q}_1+{Q}_2=2k)\approx\mathbb{P}({Q}=2k).
	\end{equation}
	For the standard $Geo/Geo/2$ model with one queue, it is well known that 
	\begin{equation}\label{h2} \mathbb{P}({Q}=k)\approx(1-\rho)\rho^k.
	\end{equation}
	Combining \eqref{h1} and \eqref{h2} leads to the following conjectured behaviour of the tail probability of the minimum queue length
	\begin{equation}\label{h3} \mathbb{P}(\min\{{Q}_1,{Q}_2\}=m)\approx  C \rho^{2k},\ k\to\infty, \end{equation}
	for some positive constant $C$. Hence, the decay rate of the tail probabilities for $\min\{Q_1,Q_2\}$ is conjectured to be equal to the square of the decay rate of the tail probabilities of $Q$. Proposition \ref{propDecay} states this conjecture, for the case when the difference of the queue sizes is fixed.
	
	Furthermore, one can determine the decay rate of the marginal distribution of $\min(Q_1,Q_2)$. The latter does not follow immediately from Proposition \ref{propDecay}, because the summation over the difference in queue sizes, which can be unbounded, requires a formal justification. In this paper, we derive an exact expression for $\pi_{k,l}$, which, among other things renders the following result.
	\begin{proposition}\label{prop-3.2}
		For $\rho<1$,
		\begin{equation} 
		\lim_{k\rightarrow\infty}\rho^{-2k}\mathbb{P}(\min(Q_1,Q_2)=k)=\frac{c\,d_0}{1- \delta},
		\end{equation}
		with $c,d_0, \delta$ constants that do not depend on $k$.
	\end{proposition}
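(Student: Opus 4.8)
The plan is to base the proof on the explicit product-form representation of $\pi_{k,l}$ that the compensation approach delivers in Section~\ref{CA}. Write this representation as $\pi_{k,l}=\sum_{i\ge 0}c_i\,x_i^{\,k}\,y_i^{\,l}$, where the pairs $(x_i,y_i)$ lie on the kernel curve, are ordered so that $\rho^2=x_0>|x_1|\ge|x_2|\ge\cdots$ and satisfy $|y_i|<1$ for every $i$, and the series converges absolutely on $\tilde{\mathcal S}$. Since $\mathbb{P}(\min(Q_1,Q_2)=k)=\sum_{l\ge 0}\pi_{k,l}$, the first step is to sum this representation over $l$. Because $|y_i|<1$, each inner geometric series equals $1/(1-y_i)$, and — once the double series is shown to be absolutely summable — Fubini for series lets us interchange the two sums to obtain
\[
\mathbb{P}(\min(Q_1,Q_2)=k)=\sum_{i\ge 0}\frac{c_i}{1-y_i}\,x_i^{\,k}.
\]
This reduces the proposition to a statement about a single power series in $k$.

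The second step is to multiply by $\rho^{-2k}=x_0^{-k}$ and let $k\to\infty$:
\[
\rho^{-2k}\,\mathbb{P}(\min(Q_1,Q_2)=k)=\sum_{i\ge 0}\frac{c_i}{1-y_i}\Bigl(\frac{x_i}{x_0}\Bigr)^{k}.
\]
Every term with $i\ge 1$ tends to $0$ because $|x_i/x_0|<1$, while the $i=0$ term is the constant $c_0/(1-y_0)$. To move the limit inside the sum I would invoke dominated convergence for series: fix $k_0$ with $\sum_{i\ge 0}|c_i|\,|x_i|^{k_0}/|1-y_i|<\infty$ (absolute convergence of the compensation series at an interior level); then for $k\ge k_0$ one has $|x_i/x_0|^{k}\le|x_i/x_0|^{k_0}$ since $|x_i|\le x_0$, so each summand is dominated by $\rho^{-2k_0}|c_i|\,|x_i|^{k_0}/|1-y_i|$, which is summable and independent of $k$. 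Hence $\lim_{k\to\infty}\rho^{-2k}\mathbb{P}(\min(Q_1,Q_2)=k)=c_0/(1-y_0)$. Applying the same domination term-by-term in $l$ to $\rho^{-2k}\pi_{k,l}=\sum_i c_i(x_i/x_0)^{k}y_i^{\,l}$ gives $\lim_k\rho^{-2k}\pi_{k,l}=c_0\,y_0^{\,l}$, which on comparison with Proposition~\ref{propDecay} identifies $c_0=c\,d_0$ and $y_0=\delta$; substituting yields the claimed value $c\,d_0/(1-\delta)$.

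The main obstacle is not the limit computation but securing the interchanges, i.e. the absolute summability $\sum_i\sum_l|c_i|\,|x_i|^{k}|y_i|^{l}=\sum_i|c_i|\,|x_i|^{k}/(1-|y_i|)<\infty$ together with the fact that $\rho^2$ is the \emph{unique} dominant horizontal parameter. Both are structural properties of the compensation series from Section~\ref{CA}: the successive compensation steps produce coefficients $c_i$ decaying geometrically fast, the parameters $x_i$ form a sequence with $|x_i|\to 0$, and the $y_i$ stay bounded away from $1$, so that dividing by $1-y_i$ does not destroy summability. If one prefers not to rely on the full closed form, the alternative is to combine Proposition~\ref{propDecay} with a uniform tail bound of the type $\pi_{k,l}\le C\rho^{2k}\xi^{\,l}$ for some $\xi<1$ and all large $k$ (again readable off the product-form estimate), and then apply dominated convergence for series directly to $\sum_l\rho^{-2k}\pi_{k,l}$; the conclusion $\sum_l c\,d_0\,\delta^l=c\,d_0/(1-\delta)$ follows identically.
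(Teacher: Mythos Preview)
Your approach is essentially the paper's: the ``proof'' the paper offers is simply that Propositions~\ref{propDecay} and~\ref{prop-3.2} are read off from the compensation representation of Theorem~\ref{main} together with the absolute-convergence result (Proposition~\ref{prop_Abs_Conv} in Appendix~\ref{ApendixCA}). Your dominated-convergence argument for interchanging $\lim_k$ and $\sum_l$ is exactly the justification the paper leaves implicit.

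One caveat on your primary argument: the single expansion $\pi_{k,l}=\sum_{i\ge 0}c_i x_i^{k}y_i^{l}$ with the \emph{same} coefficients $c_i$ and parameters $y_i$ for every $l\ge 0$ is not quite what Theorem~\ref{main} delivers. The paper gives \eqref{on-final} only for $l\ge 3$, while for $l=0,1$ the series is \eqref{on2-final}, in which the subdominant coefficients are the separate constants $e_{l,i}$ rather than $c_i y_i^{l}$; so your geometric summation $\sum_l y_i^{l}=1/(1-y_i)$ is not literally available term by term. This is harmless for the conclusion, because (i) only finitely many $l$ are affected, and (ii) the leading $\gamma_0^{k}$-coefficient \emph{is} $c\,d_0\,\delta_0^{l}$ uniformly in $l$ --- which is precisely what your alternative route (Proposition~\ref{propDecay} plus a uniform tail bound $\pi_{k,l}\le C\rho^{2k}\xi^{l}$ and dominated convergence) uses. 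That alternative is clean and matches the paper's intent; I would lead with it rather than with the unified expansion.
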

	The proofs of Propositions \ref{propDecay}  and \ref{prop-3.2}, along with several other asymptotic and exact results, are given in Section \ref{CA}.

	\subsection{The compensation procedure}\label{CA}
	In this section, we obtain the equilibrium joint queue length distribution using the compensation approach. This approach  yields an explicit expression by directly exploiting the balance equations, without any transforms. 
	
	The compensation approach as has been briefly discussed in the introduction attempts to solve the balance equations by a linear combination of product-form terms. This is achieved by first characterising a sufficiently rich basis of product-form solutions satisfying the balance equations in the interior of the state space. Subsequently this basis is used to construct a linear combination that also satisfies the equations for the boundary states. Note that the basis contains uncountably many elements. Therefore a procedure is needed to select the appropriate elements. This procedure is based on a compensation argument (which explains the name of the method): after introducing the first term, countably many terms may subsequently be added so as to alternatingly compensate for the error on one of the two boundaries. The main steps of the analysis are briefly outlined below.

	\begin{description}
		\item[Step 1] Characterise the set of product-forms
		\[\gamma^{k}\delta^{l}\]
		satisfying the balance equations in the interior of the state space, i.e., Equation \eqref{in}.
		Substitution of the product-form $\gamma^{k}\delta^{l}$ into \eqref{in} and division by common powers yields a cubic equation in $\gamma$ and $\delta$
		\begin{equation}
		\begin{array}{rl}
		\left(1-\left(\bar{\lambda}(\bar{a}^2+a^2) + \lambda a\bar{a}\right) \right)\gamma\delta=&  \left(\bar{\lambda} \bar{a} a \gamma  + \lambda(a^2 + \bar{a}^2)\right) \delta^2 + \lambda \bar{a} a  \delta^3 + \bar{\lambda} \bar{a} a \gamma^2.
		\end{array}\label{eq}
		\end{equation}
		The solutions to Equation \eqref{eq} form the basis. In particular the points on the curve \eqref{eq} and  inside
		the region $0<|\gamma|,|\delta|<1$ characterise a continuum of product-forms satisfying the inner equations.
		
		\item[Step 2] Construct a linear combination of elements in this rich basis, which is a formal solution to the balance equations. Here the word formal is used to indicate that (at this stage) we do not treat the convergence of the solution. This aspect is treated in Step 3. The formal solution is constructed as follows:
		\begin{enumerate}
			\item[(a)]  The construction of a linear combination starts with a 
			suitable initial term $\gamma_0$ that satisfies the interior of the state space and also the balance equations   \eqref{hor1}-\eqref{hor3}. In Lemma \ref{lem1s}, in Appendix \ref{ApendixCA}, we prove that $\gamma_0=\rho^2$. Then, from Equation \eqref{eq} we obtain the unique $\delta_0$, with $|\delta_0|<|\gamma_0|$, such that
			\[ \pi_{k,l} \vDash d_0 \gamma_0^k\delta_0^l,\, k>0,l\geq0,\]
			satisfies Equations \eqref{in}-\eqref{hor3}, where the  double turnstile symbol $( \vDash )$ is used to signify that $ \pi_{k,l}$ semantically entails the form $d_0\gamma_0^k\delta_0^l$.  The uniqueness of the $\delta$-root is proven in Lemma \ref{lemma1}. The constant $d_0$ can be set equal to one, this will be corrected in Step 4 with the computation of the normalisation constant.
			\item[(b)]  The starting tuple $(\gamma_0,\delta_0)$ violates Equation \eqref{ver} on the vertical boundary. To compensate for this error, we add a new product-form term coming from the basis, such that  the sum of the two terms satisfies the balance equations in all states on the vertical boundary. In particular, it is easy to show that the new tuple is $(\gamma_1,\delta_0)$. The new $\gamma$-root is uniquely determined from Equation \eqref{eq}, with $|\delta_1|<|\gamma_0|$. The uniqueness of the $\gamma$-root is proven in Lemma \ref{lemma1}.
			
			Then,
			\[ \pi_{k,l} \vDash d_0\gamma_{0}^{k}\delta_{0}^{l}+c_1\gamma_{1}^{k}\delta_{0}^{l},\, k\geq0,l\geq3,\] 
			satisfies \eqref{ver} if
			\begin{align}\label{JSRQ_C}
			c_1=-\frac{\delta_{0}(1-\bar{\lambda}\bar{a}-\lambda a\bar{a})-\bar{\lambda}a\delta_{0}^{2}-\bar{\lambda}a\bar{a}\gamma_{0}}{\delta_{0}(1-\bar{\lambda}\bar{a}-\lambda a\bar{a})-\bar{\lambda}a\delta_{0}^{2}-\bar{\lambda}a\bar{a}\gamma_{1}}d_0,
			\end{align}
			with $d_0$ known constant from the previous step.
			
			\item[(c)] Adding the new term violates the balance equations \eqref{hor1}-\eqref{hor3}, hence we compensate for this error by adding a product-form solution $(\gamma_1,\delta_1)$ satisfying \eqref{eq}, and \eqref{hor1}-\eqref{hor3}, such that 
			\[ \pi_{k,l} \vDash 
			\begin{cases}
			d_0\gamma_{0}^{k}\delta_{0}^{l}+c_1\gamma_{1}^{k}\delta_{0}^{l}+d_1\gamma_{1}^{k}\delta_{1}^{l},\, k>0,l\geq2,\\
			d_0\gamma_{0}^{k}\delta_{0}^{l}+e_{l,1}\gamma_{1}^{k}\delta_{1}^{l},\, k>0,l=0,1.\\
			\end{cases}
			\]
			The three unknowns $e_{0,1}, e_{1,1}$ and $d_1$ can be computed from the following system of three equations 
			\begin{equation} \label{System1}
			A(\gamma_1,\delta_1) \begin{bmatrix} e_{0,1} \\ e_{1,1} \end{bmatrix}  + B(\gamma_1, \delta_1) d_1  \delta_1^2 = - B(\gamma_1, \delta_0) c_1 \delta_0^2, 
			\end{equation}
			with
			\begin{eqnarray*} 
				A(\gamma,\delta) &=& \begin{bmatrix} 
					\gamma(1-\bar{\lambda}(\bar{a}^{2}+a^{2})-\lambda a\bar{a}) & -(\gamma\delta\bar{\lambda}a\bar{a}+\delta \lambda(\bar{a}^{2}+a^{2}))\\
					-\gamma(\lambda(\bar{a}^{2}+a^{2})+2\gamma\bar{\lambda}a\bar{a}) & \gamma\delta(1-{\lambda}(\bar{a}^{2}+a^{2})-2\lambda a\bar{a}) \\
					\gamma^2\lambda a \bar{a}&\gamma^2\delta\bar{\lambda} a \bar{a}
				\end{bmatrix}, \\
				B(\gamma, \delta) &=& 
				\begin{bmatrix} 
					- \lambda a\bar{a} \\ 
					-(\gamma\bar{\lambda}a\bar{a}+\lambda(\bar{a}^{2}+a^{2})+\delta\lambda a\bar{a}) \\  
					-\gamma(1-\bar{\lambda}(\bar{a}^{2}+a^{2})-\lambda a\bar{a})+\gamma\delta\bar{\lambda a \bar{a}} +\delta{\lambda}(\bar{a}^{2}+a^{2})+\delta^2\lambda a \bar{a}
				\end{bmatrix}.
			\end{eqnarray*}
			
			\item[(d)] We  continue in this manner until we  construct the entire formal series
			\begin{align}
			\pi_{k,l} &\vDash \sum_{i=0}^{\infty}(d_{i}\gamma_{i}^{k}+c_{i+1}\gamma_{i+1}^{k})\delta_{i}^{l},\,~~ k\geq0,l\geq2,\,\text{ (pairs with the same $\d$-term)},\,\label{on}\\
			\pi_{k,l} &\vDash d_0\gamma_0^k\delta_0^l+\sum_{i=1}^{\infty}e_{l,i}\gamma_{i}^{k}\delta_0^l,\, ~~k>0,l=0,1,\label{on2}
			\end{align} 
			and $\pi_{0,0}$, $\pi_{0,1}$, and $\pi_{0,2}$ are obtained from Equations \eqref{RB1}-\eqref{RB3}. Note that Equation \eqref{on} can be equivalently written as follows
\begin{align*}
			\pi_{k,l} &\vDash d_0\gamma_0^k\delta_0^l+\sum_{i=0}^{\infty}(c_{i+1}\delta_{i}^{l}+d_{i+1}\delta_{i+1}^{l})\gamma_{i+1}^{k},\,~~k\geq0,l\geq2,\,\text{ (pairs with the same $\gamma$-term)}.
\end{align*} 			
			
		\end{enumerate}

		\item[Step 3] Prove that the formal solution \eqref{on} and \eqref{on2} converges. This is split up into two parts: i) we first show in Proposition \ref{JSQR_prop4} that the sequences $\{\gamma_i\}_{i\in\mathbb{N}}$ and $\{\delta_i\}_{i\in\mathbb{N}}$ converge to zero exponentially fast, and ii) we show in Proposition \ref{prop_Abs_Conv} that the formal solution converges absolutely in all states. The Propositions and their proofs are in Appendix \ref{ApendixCA}.
		
		\item[Step 4] Determine the normalisation constant.
	\end{description}
	Performing the steps described above for the compensation approach  leads to the following main result for the equilibrium distribution.
	
	\begin{theorem}\label{main}
		For $\rho<1$, 
		\begin{align}
		\pi_{k,l} &= c\sum_{i=0}^{\infty}(d_{i}\gamma_{i}^{k}+c_{i+1}\gamma_{i+1}^{k})\delta_{i}^{l},\,~~ k\geq0,l\geq3,\, \text{ (pairs with the same $\delta$-term)}, \label{on-final}\\
		&= c\Big(d_0\gamma_0^k\delta_0^l+\sum_{i=0}^{\infty}(c_{i+1}\delta_{i}^{l}+d_{i+1}\delta_{i+1}^{l})\gamma_{i+1}^{k}\Big),\, ~~k\geq0,l\geq3,\,\text{ (pairs with the same $\gamma$-term)},\nonumber\\
		\pi_{k,l} &= c \Big(d_0\gamma_0^k+\sum_{i=1}^{\infty}e_{l,i}\gamma_{i}^{k}\Big),\,~~ k>0,l=0,1,\label{on2-final}
		\end{align}
		with $c$ denoting the normalisation constant. 
		The sequences $\{\gamma_i\}_{i\in\mathbb{N}}$, $\{\delta_i\}_{i\in\mathbb{N}}$, $\{c_i\}_{i\in\mathbb{N}}$, $\{d_i\}_{i\in\mathbb{N}}$, and $\{e_i\}_{i\in\mathbb{N}}$ are obtained recursively based on the analysis of Steps 1-4 above. 
		
		The equilibrium probabilities close to the origin $\pi_{0,0}$, $\pi_{0,1}$, and $\pi_{0,2}$ are obtained by directly solving the balance equations \eqref{RB1}-\eqref{RB3}.
	\end{theorem}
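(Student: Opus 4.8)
\emph{Proof strategy.} The plan is to realise the right-hand sides of \eqref{on-final}--\eqref{on2-final} as a genuine, absolutely summable solution of the full set of balance equations \eqref{in}--\eqref{RB3}, and then to invoke uniqueness of the stationary distribution: by Theorem \ref{thm1} the chain $\{\tilde{\bm{Q}}(n)\}$ is irreducible and positive recurrent for $\rho<1$, so its cone of summable invariant measures is one-dimensional, and any nonzero element of it is, after normalisation, the equilibrium distribution $(\pi_{k,l})$. It therefore suffices to (i) construct the formal series, (ii) show it converges absolutely in every state, (iii) verify that the resulting function satisfies all balance equations, and (iv) fix the normalisation constant $c$; these are precisely Steps 1--4 outlined above.

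\emph{Steps 1 and 2.} First I would carry out Step 1: inserting the product form $\gamma^{k}\delta^{l}$ into the interior equation \eqref{in} and dividing out common powers yields the cubic kernel \eqref{eq}, whose zero set inside $0<|\gamma|,|\delta|<1$ is the pool of admissible product forms. Step 2 is the inductive compensation. One starts, by Lemma \ref{lem1s}, with $\gamma_0=\rho^{2}$ and the unique companion $\delta$-root $\delta_0$ of \eqref{eq} with $|\delta_0|<|\gamma_0|$ (Lemma \ref{lemma1}); as shown there, $d_0\gamma_0^{k}\delta_0^{l}$ already satisfies \eqref{in} and the horizontal-strip equations \eqref{hor1}--\eqref{hor3}, but violates the vertical equation \eqref{ver}. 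Adding $c_1\gamma_1^{k}\delta_0^{l}$, with $\gamma_1$ the second $\gamma$-root of \eqref{eq} at level $\delta_0$ and $c_1$ given by \eqref{JSRQ_C}, restores \eqref{ver}; this in turn breaks \eqref{hor1}--\eqref{hor3}, which is repaired by adding $d_1\gamma_1^{k}\delta_1^{l}$ together with the two corner coefficients $e_{0,1},e_{1,1}$ determined by the linear system \eqref{System1}. Iterating, one alternately introduces a fresh $\gamma$-root (to cancel the error on the vertical boundary) and a fresh $\delta$-root (to cancel the error on the horizontal strip), which defines the sequences $\{\gamma_i\},\{\delta_i\}$ through \eqref{eq} and the coefficient sequences $\{c_i\},\{d_i\},\{e_{l,i}\}$ through the attached $2\times2$ and $3\times3$ linear systems, producing the formal expressions \eqref{on}--\eqref{on2}. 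The bookkeeping to watch is the distinction between the transition rows $l=0,1$, carrying the extra $e_{l,i}$-terms, and the rows $l\ge 2$, together with the three near-origin states $\pi_{0,0},\pi_{0,1},\pi_{0,2}$, which are deliberately left to be recovered at the end from \eqref{RB1}--\eqref{RB3}.

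\emph{Step 3 (the crux).} The heart of the proof is convergence. By Proposition \ref{JSQR_prop4} the compensation parameters satisfy geometric decay, $\gamma_i,\delta_i\to 0$, and by Proposition \ref{prop_Abs_Conv} the double series \eqref{on} then converges absolutely in every state $(k,l)$, boundary states included. Absolute convergence is what makes the argument rigorous: it licenses (a) rearrangement of the double sum, which is exactly the passage between the ``pairs with the same $\delta$-term'' and ``pairs with the same $\gamma$-term'' representations in \eqref{on-final}; (b) application of the (linear, bounded) balance operator term by term, so that the per-term identities from Step 2 upgrade to genuine equalities for the sums, whence \eqref{in}--\eqref{ver} hold for the constructed function; and (c) the conclusion that the total mass $\sum_{k,l}\pi_{k,l}$, obtained by summing the geometric series contributed by each product form, is finite and nonzero — nonzero because the leading term $d_0\gamma_0^{k}\delta_0^{l}$ dominates the tail, cf.\ Proposition \ref{propDecay}.

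\emph{Step 4 and conclusion.} Finally, one sets $c$ equal to the reciprocal of $\sum_{k,l}\pi_{k,l}$ (a convergent sum of geometric series) and recovers $\pi_{0,0},\pi_{0,1},\pi_{0,2}$ from the three equations \eqref{RB1}--\eqref{RB3}, which close the remaining system. The normalised function is then a summable invariant measure of the irreducible positive recurrent chain of Theorem \ref{thm1}, hence coincides with its equilibrium distribution, which is \eqref{on-final}--\eqref{on2-final}. I expect the main obstacle to be Step 3 — establishing the geometric decay of $\{\gamma_i\},\{\delta_i\}$ generated by the compensation recursion (a careful root-location analysis of the kernel \eqref{eq} inside the unit polydisc) and the ensuing absolute convergence, uniformly enough to justify the term-by-term verification of the balance equations; once that is in place, Steps 1, 2 and 4 reduce to linear algebra and geometric-series summation.
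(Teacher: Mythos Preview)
Your proposal is correct and follows essentially the same route as the paper: the paper's proof of Theorem \ref{main} \emph{is} the execution of Steps 1--4 in Section \ref{CA}, supported by Lemmas \ref{lem1s}, \ref{lemma1} and Propositions \ref{JSQR_prop4}, \ref{prop_Abs_Conv} in Appendix \ref{ApendixCA}, followed by normalisation and recovery of $\pi_{0,0},\pi_{0,1},\pi_{0,2}$ from \eqref{RB1}--\eqref{RB3}; your explicit appeal to irreducibility and positive recurrence (Theorem \ref{thm1}) for uniqueness is a welcome addition that the paper leaves implicit. One small overstatement to tighten: Proposition \ref{prop_Abs_Conv} only guarantees absolute convergence for $k+l>N$ (respectively $k>N$ when $l=0,1$), not in every state, so the values at the finitely many remaining states near the origin are obtained by propagating through the balance equations rather than by direct summation of the series --- this is exactly what the paper does numerically in Steps \ref{Algo_CA_Step6}--\ref{Algo_CA_Step7} of Algorithm \ref{Compensation_Algo}.
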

	Clearly, from this result we can derive similar expressions for other performance characteristics such as the mean queue lengths, the correlation between the queue lengths, the mean waiting time, etc. 
	
	\begin{remark}
		Theorem \ref{main} can be generalised  to the asymmetric case by directly replicating Steps 1-4 above, see \cite{ad0,ad1}. 
	\end{remark}

	\subsection{Numerical implementation of the compensation approach} 
	In this section, we discuss how to numerically implement  the equilibrium distribution analysis based on the compensation approach. The equilibrium distribution $\pi_{k, l}$, $k\geq0$, $l\geq0$, is written as a linear combination of product-form terms, cf.  \eqref{on-final}, \eqref{on2-final}. The first step of the numerical implementation is to consider the first few terms of the series expression for $\pi_{k, l}$. More concretely, let
	\begin{align}
	\pi_{k,l}^{(N_{\mathrm{ca}})} &= c\sum_{i=0}^{N_{\mathrm{ca}}}(d_{i}\gamma_{i}^{k}+c_{i+1}\gamma_{i+1}^{k})\delta_{i}^{l},\, ~~ k\geq0,l\geq3,\label{on-numer}\\
	\pi_{k,l}^{(N_{\mathrm{ca}})} 	&= c\Big(d_0\gamma_0^k+\sum_{i=1}^{N_{\mathrm{ca}}}e_{l,i}\gamma_{i}^{k}\Big),\,~~ k>0,l=0,1.\label{on2-numer}
	\end{align}
	where here $N_{\mathrm{ca}}$ denotes the truncation level of the series expression obtained using the compensation approach. From the analysis of Proposition \ref{prop_Abs_Conv}, it follows that the inclusion of more terms of the series expression improves to a desired accuracy the computation of the equilibrium distribution, i.e., the bigger the value of $N_{\mathrm{ca}}$ the better the approximation of $\pi_{k, l}$. The constant $N_{\mathrm{ca}}$ is determined such that 
	\begin{equation}
	\label{Eq:stop_crit}
	\left| \frac{\sum\limits_{k,l = 0}^{N_{\mathrm{ca}}} \pi^{{(N_{\mathrm{ca}})}}_{k, l} - \sum\limits_{k,l = 0}^{N_{\mathrm{ca}}} \pi^{({N_{\mathrm{ca}}}-1)}_{k, l}}{\sum\limits_{k,l = 0}^{N_{\mathrm{ca}}} \pi^{{(N_{\mathrm{ca}})}}_{k,l}}\right| < \e,\end{equation}
	with $\e$ the precision error. This is included as the stopping criterium for the algorithm, i.e., we start with  $N_{\mathrm{ca}}=1$ and as long as Equation \eqref{Eq:stop_crit} is not satisfied, we increase the value of $N_{\mathrm{ca}}$ by one.
	
	Furthermore, for numerical purposes, we assume that the state-space  $\tilde{\mathcal{S}}$ is truncated, i.e., we consider the following truncated state-space $\tilde{\mathcal{S}}^{(T^{(k)},T^{(l)})}=\{(k,l):\,0\leq k\leq T^{(k)},0\leq l\leq T^{(l)}\}$. The constants $T^{(k)}$, $T^{(l)}$ are determined such that 
	\begin{align*}
	c \,d_0\, \gamma_0^{T^{(k)}}<\e \text{ and } c \,d_0\, \delta_0^{T^{(l)}}<\e,
	\end{align*}
	and $T^{(k)},T^{(l)}\geq3$, so as Equations  \eqref{on-final} and \eqref{on2-final} can be applied. 
	Note that the above are direct consequences of the asymptotic behaviour of the random walk at hand, cf. Proposition \ref{propDecay}. Furthermore, as $|\delta_0|<|\gamma_0|$, cf. Proposition \ref{JSQR_prop4}, it suffices to choose 
	\begin{align*}
	T^{ca}:=T^{(k)}=T^{(l)} \approx \max\{\lceil \log(\e)/\log(\d_{0}) \rceil,3\}.
	\end{align*}
	
	In Algorithm \ref{Compensation_Algo}, we provide all the necessary information for the numerical implementation of the compensation approach.

	\begin{algorithm}[H]	
		\caption{Compensation approach algorithmic implementation}
		\label{Compensation_Algo}
		\begin{algorithmic}	[1]	
			\State Inputs $\l$, $a$ and precision $\e$.
			\State \label{Algo_CA_Step2}Set $\gamma_0 = \rho^2 $, $d_0=1$ and $N_{\mathrm{ca}} = 1$.
			\State   Compute  $\d_0$ from Equation \eqref{eq}.
			\State \label{Algo_CA_Step4}Set $T_{\mathrm{ca}}  =\max\{ \lceil \log(\e)/\log(\d_{0}) \rceil,3\}$. 
			\State \label{Algo_CA_Step5}Compute recursively  $\g_i$, $\d_i$, for $i=1,\ldots, N_{\mathrm{ca}}$, from Equation \eqref{eq}.
			\State  \label{Algo_CA_Step6}Compute the coefficients $c_i, e_{i, 0}, e_{i, 1}$ and $d_i$, $i=0,1,\ldots, N_{\mathrm{ca}}$, recursively from Equations \eqref{JSRQ_C} and \eqref{System1}, starting with $d_0=1$, cf. Step \ref{Algo_CA_Step2}.
			\State \label{Algo_CA_Step6}For all $ \lfloor T_{\mathrm{ca}} / 2\rfloor < k, l \leq T_{\mathrm{ca}}$, compute  $\pi^{(N_{\mathrm{ca}})}_{k, l}$  from Equation \eqref{on2-numer}.
			\State \label{Algo_CA_Step7}\label{Algo_CA_Step8}For all $ 0 \leq k, l \leq \lfloor T_{\mathrm{ca}} / 2\rfloor$, solve the linear system of the balance equations \eqref{in}-\eqref{RB3} and compute  $\pi^{N_{\mathrm{ca}}}_{k, l}$.
			\State   \label{Algo_CA_Step8}Normalize  $\pi^{(N_{\mathrm{ca}})}_{k, l}$, i.e., set   $\pi_{k, l}^{(N_{\mathrm{ca}})} = \pi_{k, l}^{(N_{\mathrm{ca}})}/\sum\limits_{0\leq k, l\leq T_{\mathrm{ca}}} \pi_{k, l}^{(N_{\mathrm{ca}})}$, for all $0 \leq k, l \leq {T_{\mathrm{ca}}}$.
			\State  Stop if ${\left| \frac{\sum\limits_{k,l = 0}^{T_{\mathrm{ca}}} \pi^{{(N_{\mathrm{ca}})}}_{k, l} - \sum\limits_{k,l = 0}^{T_{\mathrm{ca}}} \pi^{({N_{\mathrm{ca}}}-1)}_{k, l}}{\sum\limits_{k,l = 0}^{T_{\mathrm{ca}}} \pi^{{(N_{\mathrm{ca}})}}_{k,l}}\right|} < \e$, else update  $N_{\mathrm{ca}} = N_{\mathrm{ca}}+1$ and go to Step \ref{Algo_CA_Step5}.		
		\end{algorithmic}		
	\end{algorithm}

	\subsection{Applicability of the compensation approach in case of bounded transitions}\label{com}
	In Section \ref{Sec4}, we mentioned that the model at hand violates the nearest neighbour condition for the applicability of the compensation approach. Nonetheless, the analysis performed demonstrated that the compensation approach can be generalised to cover a larger class of random walks permitting transitions not only to the nearest neighbours, but to a bounded region of neighbouring states. From our analysis, it becomes clear that for random walks with the structure of the system at hand and bounded quasi-birth-death transitions along the rays $R_L=\{(k,l):\, 2k+l=L\}$, $L\geq L_0$, the analysis performed in Section \ref{CA} carries out to a tee. We have confirmed this in the system depicted in Figure \ref{3hop_fig}.
	
	\begin{figure}[h!]
		\begin{center}
			\includegraphics[scale= 0.9]{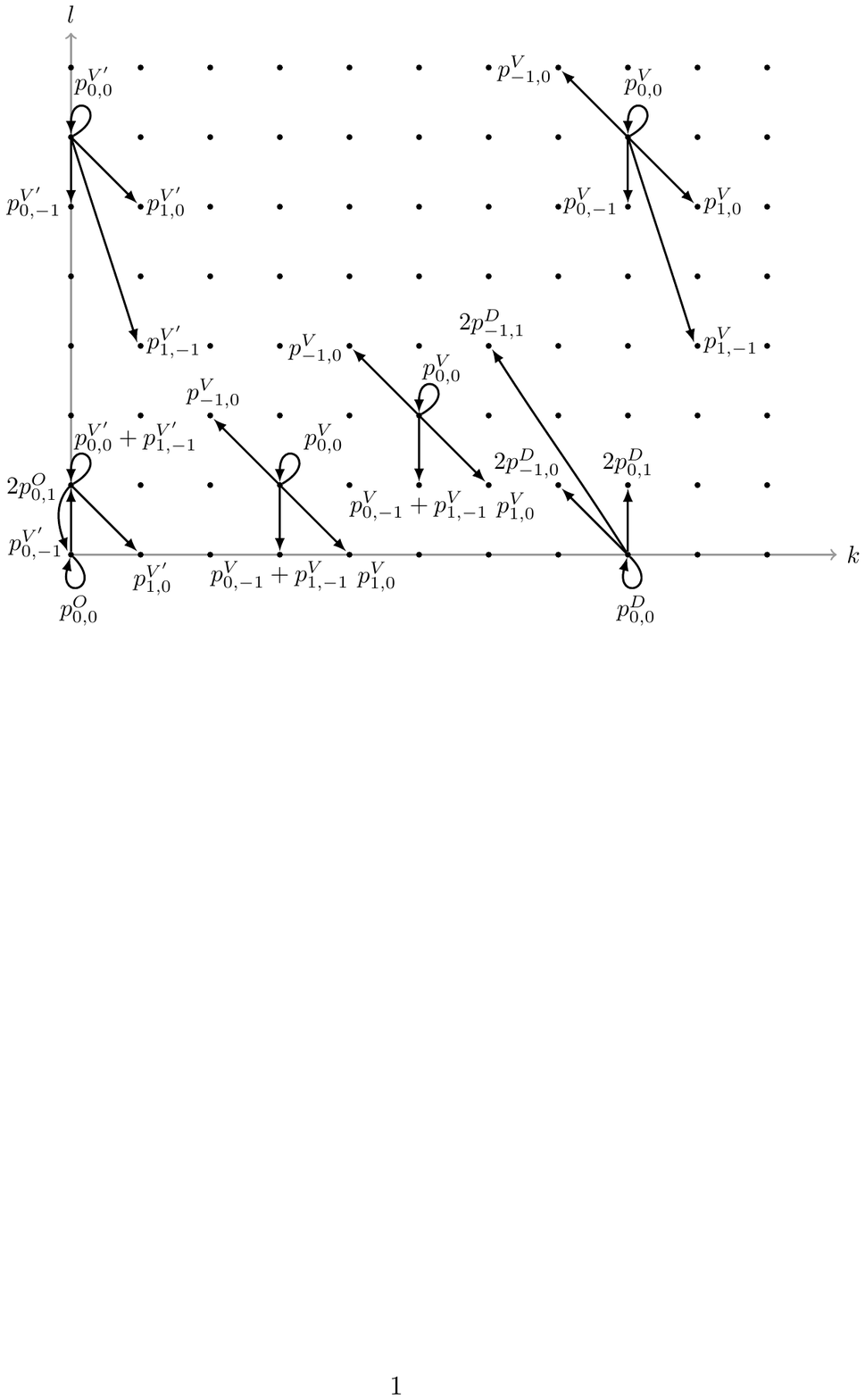}
		\end{center}	
		\caption{The one step transition probabilities diagram for a few representative states}
		\label{3hop_fig}
	\end{figure}

	\section{Equilibrium analysis: Power series algorithm implementation}
	\label{Sec:PSA}
	In this section, we show how the power series algorithm (PSA) can be used to analyse the relay-based cooperative communication network with collisions and with JSRQ protocol. PSA is an algorithmic procedure which is often used to numerically obtain the performance measures of multi-dimensional queueing models, which fit in the class of quasi birth-and-death processes. The intrinsic idea behind PSA is the transformation of the non-recursively solvable set of balance equations into a recursively solvable set of equations by adding one dimension into the state space. This is achieved by expressing the equilibrium distribution as power series in some variable based on the model parameters, this allows the calculation of the equilibrium joint queue length distribution. PSA was first applied by Beneš \cite{benevs1965} and thereafter by Hooghiemstra et al. \cite{hooghiemstra1988power}, and it was further developed by Blanc and co-authors, see, e.g., \cite{blanc1987numerical,blanc1992,Koole1994}. Although this procedure lacks in theoretical foundation, it has been very successfully applied to several systems with multiple queues. One of the objectives of this work is to provide an extensive numerical comparison between the compensation approach and PSA. For this reason, we show how PSA is applied to the two relay model at hand.
	
	\begin{remark}
		Note that PSA is a powerful numerically oriented procedure, that can be applied to the asymmetric case and it can be generalised to any (finite) number of relays. 
	\end{remark} 
	
	\subsection{Computation scheme} 
	For the analysis that follows, we use the transformed model, see Section \ref{Sec4}, and the balance equations \eqref{in}-\eqref{RB3}. For the application of the PSA procedure, we observe the following property.
	
	\begin{property}
		For each state $(k,l)\in\hat{S}$, it holds that 
		\begin{equation}\label{PGF}
		\pi_{k, l} =  \sum\limits_{n = 0}^{\infty} ~\rho^{n + k + l}~ \beta(n, k, l),\ k,l\geq0,
		\end{equation} 
		under the following assumptions:
		\begin{itemize}
			\item [A1.] $0\leq \rho <1$ (stability condition);
			\item [A2.] $\pi_{k, l}$ can be analytically continued as a function of $\rho$ into a domain which includes the disk $|\rho -\frac{1}{2} | \leq \frac{1}{2} $.
		\end{itemize}
		As illustrated in \cite{Hout1995,blanc1987numerical} and the references therein, this property is valid for among others any quasi birth-and-death system, the JSQ system, the coupled processor system, covering also the model under consideration.
	\end{property}
	
	In what follows, we apply PSA to the model at hand and  derive a recursive, computational scheme for the equilibrium joint queue length distribution. For more details on PSA, the interested reader is referred to \cite{blanc1987numerical, Dorsman2013, Hout1995, hooghiemstra1988power}, where one can find the detailed implementation of PSA for a plethora of systems. Following the steps of PSA in \cite{blanc1992}, we obtain and solve a recursive set of equations for the coefficients $\beta(n, k, l)$ in \eqref{PGF}. From this, the equilibrium distribution is computed, as well as any performance measures derived from it. 
	
	In Appendix  \ref{ApendixPSA}, we present all details of the method. For illustration purposes, we have chosen $a=\f{1}{2}$, so as to simplify the notation and enhance the readability of the method. The computation scheme of the PSA is summarised in the next paragraphs.
	
	We first substitute the power-series expansion \eqref{PGF} into the balance equations \eqref{in}-\eqref{RB3} and the normalisation equation. This leads to a polynomial expression in $\rho$ equal to zero, which we use to equate the corresponding powers of $\rho$, and obtain a recursion in the coefficients $\beta(n, k, l)$, $n \geq 0,(k,l)\in\hat{S}$. This results in a computational scheme through which we can compute the coefficients $\beta(n, k, l)$. The computation of the coefficients $\beta(n, k, l)$ is equivalent, in terms of mathematical complexity, to solving a system of equations, cf. Equations \eqref{CB1}-\eqref{CB7}, for a finite large value of $n \geq 0$.

	Having computed the coefficients $\beta(n, k, l)$, we use them to approximate, to any degree of required precision, the equilibrium distribution, using \eqref{PGF}, and thus, we can compute many performance measures by writing them as a power series in $\rho$. From a theoretical perspective, PSA can be used to compute the performance measures in a symbolic fashion, however, in practice, this can be  achieved only for coefficients of very small order. This is mainly due to the fact that it is required to solve symbolically a recursive  scheme, which is increasingly hard as the order of $\rho$ increases. The numerical computation of the performance measures, for given values of the system parameters, is possible in principle up to an arbitrary precision, by applying the recursion until the desired precision is achieved. However, note that the given values of the system parameters need to be chosen so as to be within the radius of convergence of \eqref{PGF}. In order to expand the range of values (within the stability region), for which PSA can produce accurate numerical results, Keane et al. \cite{hooghiemstra1988power} propose the application of a bilinear conformal transformation of the real interval $[0, 1)$ onto itself
	\begin{equation}\label{ConformalMapping}
	\theta =\frac{(1+G)\rho}{1+G\rho} \iff \rho = \f{\theta}{1 + G- G \theta},
	\end{equation} 
	with $G \geq 0$. Using the above conformal mapping, the power series \eqref{PGF} is written as a power series over the parameter $\theta$
	\begin{equation}\label{PGF_new1}
	\pi_{k, l} =  \sum\limits_{n = 0}^{\infty} ~\theta^{n + k + l}~ u(n, k, l),\ k,l\geq0.
	\end{equation} 
	The derivation of the recurrence relations for the coefficients $u(n, k, l)$ of the new power series \eqref{PGF_new1} is similar to the process for $\beta(n, k, l)$, see Appendix \ref{ApendixPSA}.

	\subsection{Numerical implementation of PSA} 
	In order to compare, the two approaches, namely the compensation approach and PSA, we sketch below the algorithmic implementation of the latter. Similarly to the compensation approach, we truncate the series expression \eqref{PGF_new1} to the $N_{\mathrm{psa}}$-th term. 
	
	\begin{algorithm}[H]		
		\caption{PSA implementation}\label{PSA_New}
		\begin{algorithmic}	[1]	
			\State Inputs $\l, a$, $G$ and precision $\e$.
			\State Set $N_{\mathrm{psa}} = 1$ and $u(0, 0, 0) = 1$.
			\State \label{Algo_PSA_Step30} Set $T_{\mathrm{PSA}} = \max\{ \lceil \log(\e)/\log(\rho^2) \rceil,3\}$
			\State  \label{Algo_PSA_Step3}For all $0 \leq k, l \leq T_{\mathrm{PSA}},\ 0\leq n \leq N_{\mathrm{psa}}$, compute the coefficients $u(n, k, l)$ by solving the system of linear equations cf.   \eqref{CBN1}-\eqref{CBN7}, in Appendix \ref{ApendixPSA}.
			\State \label{Algo_PSA_Step5} For all $0\leq k, l \leq T_{\mathrm{psa}}$, compute  $\pi^{(N_{\mathrm{psa}})}_{k, l} =  \sum\limits_{n = 0}^{N_{\mathrm{psa}}} ~\theta^{n + k + l}~ u(n, k, l)$.
			\State  Stop if {$\left| \frac{\sum\limits_{k,l = 0}^{T_{\mathrm{psa}}} \pi^{{(N_{\mathrm{psa}})}}_{k, l} - \sum\limits_{k,l = 0}^{T_{\mathrm{psa}}} \pi^{({N_{\mathrm{psa}}}-1)}_{k, l}}{\sum\limits_{k,l = 0}^{T_{\mathrm{psa}}} \pi^{{(N_{\mathrm{psa}})}}_{k,l}}\right| < \e$,} else update  $N_{\mathrm{psa}} = N_{\mathrm{psa}}+1$ and $T_{\mathrm{psa}} = T_{\mathrm{psa}}+1$ and go to Step \ref{Algo_PSA_Step3}.
		\end{algorithmic}		
	\end{algorithm}

	
	\section{Equilibrium analysis: The probability generating function approach}\label{gen}
	In the following, we show how one can apply the probability generating function (PGF) approach to analyse the model at hand. The analysis through the PGF approach leads to a functional equation, whose solution usually presents formidable difficulties. However, for the two-dimensional case, techniques have been developed to reduce the problem of the solution of the functional equation to standard problems from the theory of boundary value problems, see, e.g., \cite{fayolle,cohenboxma}. Even in these cases, the analysis is lengthy and complicated, involving sophisticated complex analysis, Riemann surfaces, and the determination of some conformal mapping. In most cases, this requires numerical analysis, which makes the formal solutions to the boundary value problems less insightful. This drawback is overcome by the use of the compensation approach and the PSA method developed in the previous sections, revealing their superiority with respect to the PGF approach. 
	
	Our aim hereon is to provide the basic steps of the analysis. To this purpose, we consider the original process $\{\bm{Q}(n),\, n \geq 0\}$, cf. Section \ref{Themodel}. Similarly to the analysis of the classical  JSQ model, we need to account for the  different transition patterns above and below the diagonal of the first quadrant, which can be seen as ``the third boundary", see, e.g., \cite[Chapter III]{cohenboxma}, \cite[Chapter 10]{fayolle}, and \cite{kurk2003}. Let
	\begin{displaymath}
	w_{i,j}=\lim_{n\to\infty}\mathbb{P}(Q_{1}(n)=i,Q_{2}(n)=j),\ i,j\geq0,
	\end{displaymath}
	denote the equilibrium joint queue length distribution of the original random walk $\{\bm{Q}(n),\, n \geq 0\}$. From the balance equations, we obtain after tedious, but straightforward algebra
	\begin{eqnarray}
	&&T(x,y)\mathbb{E}(x^{Q_{1}}y^{Q_{2}}(Q_{1}<Q_{2}))+T(y,x)\mathbb{E}(x^{Q_{1}}y^{Q_{2}}(Q_{1}>Q_{2}))+F(x,y)\mathbb{E}(x^{Q_{1}}y^{Q_{2}}(Q_{1}=Q_{2}))\nonumber\\
	&&\quad +C(x,y)\mathbb{E}(x^{Q_{1}}(Q_{2}=0))+C(y,x)\mathbb{E}(y^{Q_{2}}(Q_{1}=0))=w_{0,0}L(x,y),~~ |x|, |y| \leq 1, 
	\label{fun}
	\end{eqnarray}
	with
	\begin{align*}
	T(x,y)=&\lambda(\bar{a}^{2}+a^{2})(1-x)+\bar{\lambda}\bar{a}a(2-\frac{1}{x}-\frac{1}{y})+\lambda\bar{a}a(1-\frac{x}{y}),\\
	F(x,y)=&\frac{\lambda}{2}(2-x-y)(\bar{a}^{2}+a^{2})+\bar{\lambda}\bar{a}a(2-\frac{1}{x}-\frac{1}{y})+\frac{\lambda}{2}\bar{a}a(2-\frac{y}{x}-\frac{x}{y}),\\
	C(x,y)=&\bar{\lambda}a[a(1-\frac{1}{x})-\bar{a}(1-\frac{1}{y})],\\
	L(x,y)=&\frac{\lambda}{2}a(a-\bar{a})(2-x-y)+\bar{\lambda}a^{2}(2-\frac{1}{x}-\frac{1}{y})+\frac{\lambda}{2}a\bar{a}(2-\frac{x}{y}-\frac{y}{x}).
	\end{align*}
	The main steps of the approach are summarised below and follow considerably the lines in \cite{cohenboxma}.
	\begin{description}
		\item[Step 1] We employ an idea similar to the Wiener-Hopf factorisation\footnote{Note that a similar functional equation was investigated in \cite[Chapter III]{cohenboxma},  \cite[Chapter 10]{fayolle}. However, in the case under consideration due to the special behaviour of the random walk at the boundaries, and due to the existence of the South-East transitions (see Fig. \ref{rw}, rays $V$, $V^{\prime}$, $D$), and the North-West transitions (see Fig. \ref{rw}, rays $H$, $H^{\prime}$, $D$), the resulting functional Equation \eqref{fun} is essentially different, which considerably complicates the analysis.}; see \cite[Chapter III]{cohenboxma}, and introduce an appropriate transformation of $x$, $y$, which leads to two new variables. Then, we fix one of the two variables and define a smooth closed contour in the other variable, say the free variable. Let $x=\rho_{1}/u$, $y=\rho_{2}u$. Then \eqref{fun} is rewritten as
		\begin{align}
		&
		\mathbb{E}(\rho_{1}^{Q_{1}}\rho_{2}^{Q_{2}}u^{Q_{2}-Q_{1}}(Q_{2}>Q_{1}))T(\rho_{1}/u,\rho_{2}u)u^{2}+\mathbb{E}((\rho_{2}u)^{Q_{2}}(Q_{1}=0))C_{1}(\rho_{1}/u,\rho_{2}u)u^{2} \nonumber \\
		&
		+\mathbb{E}(\rho_{1}^{Q_{1}}\rho_{2}^{Q_{2}}(Q_{2}=Q_{1}))F(\rho_{1}/u,\rho_{2}u)u^{2}-w_{00}J(\rho_{1}/u,\rho_{2}u)u^{2} \nonumber \\
		=&-\Big(\mathbb{E}(\rho_{1}^{Q_{1}}\rho_{2}^{Q_{2}}u^{Q_{2}-Q_{1}}(Q_{2}<Q_{1}))T(\rho_{2}u,\rho_{1}/u)u^{2} + \mathbb{E}((\rho_{1}^{Q_{1}}u^{-Q_1}(Q_{2}=0))C_{2}(\rho_{1}/u,\rho_{2}u)u^{2} \nonumber \\
		&\qquad + 
		\mathbb{E}(\rho_{1}^{Q_{1}}\rho_{2}^{Q_{2}}(Q_{2}=Q_{1}))F(\rho_{2}u,\rho_{1}/u)u^{2}-w_{00}J(\rho_{2}u,\rho_{1}/u)u^{2}\Big),
		\label{cv1}
		\end{align}
		with
		\begin{displaymath}
		J(x,y)=\bar{\lambda}a^{2}(1-\frac{1}{y})+\frac{\lambda}{2}\bar{a}a(1-\frac{x}{y})+\frac{\lambda}{2}a(a-\bar{a})(1-x).
		\end{displaymath}
		\item[Step 2] Note that for $|\rho_{1}|\leq 1$, $|\rho_{2}|\leq 1$, the left-hand side of \eqref{cv1} is regular for $|u|<1$, and continuous for $|u|\leq 1$. Similarly, the right-hand side of \eqref{cv1} is regular for $|u|>1$, and continuous for $|u|\geq 1$, and as $|u|\to\infty$, it behaves as $|u|^{4}$. By considering the series expansion in powers of $u$, $u^{-1}$ of all the terms appearing in \eqref{cv1}, and equating the coefficients of equal powers, Liouville's Theorem will immediately reveal that the partial bivariate PGFs, $\mathbb{E}(x^{Q_{1}}y^{Q_{2}}(Q_{1}<Q_{2}))$, $\mathbb{E}(x^{Q_{1}}y^{Q_{2}}(Q_{1}>Q_{2}))$ behave as constants in the free variable. Now dividing \eqref{cv1} by $u^{2}$, and taking $\rho_{1}=r_{1}u$, $\rho_{2}=r_{2}/u$, $|u|=1$, yields that, for $|r_{1}|\leq 1$, $|r_{2}|\leq 1$,
		\begin{align}
		&T(r_{1},r_{2})\mathbb{E}(r_{1}^{Q_{1}}r_{2}^{Q_{2}}(Q_{1}<Q_{2}))
		+F(r_{1},r_{2})\Phi_{0}(r_{1}r_{2})+C(r_{1},r_{2})\mathbb{E}(r_{2}^{Q_{2}}(Q_{1}=0))-w_{0,0}J(r_{1},r_{2})=0,\label{fun11}\\
		&T(r_{2},r_{1})\mathbb{E}(r_{1}^{Q_{1}}r_{2}^{Q_{2}}(Q_{1}>Q_{2}))
		+F(r_{2},r_{1})\Phi_{0}(r_{1}r_{2})+C(r_{2},r_{1})\mathbb{E}(r_{1}^{Q_{1}}(Q_{2}=0))-w_{0,0}J(r_{2},r_{1})=0,\label{fun12}
		\end{align}
		with $\Phi_{0}(r_{1}r_{2}):=\mathbb{E}(r_{1}^{Q_{1}}r_{2}^{Q_{2}}(Q_{2}=Q_{1}))$. 
		
		\item[Step 3] We  next investigate the tuples of the kernel $T(r_{1},r_{2})=0$. In Lemma \ref{lemgf1}, we show that for $|r_{2}|=1$, $r_{2}\neq1$, $T(r_{1},r_{2})=0$ has two roots, say $\delta_{0}(r_{2})$, $\delta_{1}(r_{2})$, such that $|\delta_{0}(r_{2})|<1<|\delta_{1}(r_{2})|$.
		
		Denote by $r_{2}^{(0)}<r_{2}^{(1)}$ the branching points of $T(\delta(r_{2}),r_{2})=0$, i.e., the zeros of the discriminant of \eqref{mk}, and let the slit $\mathcal{G}_1=\{r_{2}\in\mathbb{C}:r_{2}\in[r_{2}^{(0)},r_{2}^{(1)}]\}$. From \cite{fayolle}, Lemma 2.3.8, pp. 26-27, both branching points are real and located inside the unit disk. At $\mathcal{G}_1$, the two branches of $\delta(r_{2})$, i.e., $\delta_{0}(r_{2})$ and $\delta_{1}(r_{2})$, are complex conjugates. Denote the image contours, $\mathcal{M}:=\delta_{0}[\overrightarrow{\underleftarrow{r_{2}^{(0)},r_{2}^{(1)}}}]$, where $[\overrightarrow{\underleftarrow{u,v}}]$ stands for the contour traversed from $u$ to $v$ along the upper edge of the slit $[u,v]$ and then back to $u$ along its lower edge. Analogously, we can define the slit $\mathcal{G}_{2}=\{r_{1}\in\mathbb{C}:r_{1}\in[r_{1}^{(0)},r_{1}^{(1)}]\}$, where $r_{1}^{(0)}<r_{1}^{(1)}$ the zeros of the discriminant of $T(r_{2},r_{1})=0$.  Note that due to symmetry, $r_{j}^{(0)}=r_{j}^{(1)}$, $j=1,2$, and $\mathcal{G}_{1}\equiv \mathcal{G}_{2}$. In Lemma \ref{lemgf2}, we provide the exact representation of $\mathcal{M}$, while in Lemma \ref{lemgf3}, we show that $\delta_{1}(r_{2})$ is analytic in $\mathbb{C} \smallsetminus [r_{2}^{(0)},r_{2}^{(1)}]$.

		
		\item[Step 4] We proceed with the formulation of the boundary value problem. Note that
		\begin{displaymath}
		\mathbb{E}(r_{1}^{Q_{1}}r_{2}^{Q_{2}}(Q_{1}<Q_{2}))=\mathbb{E}((r_{1}r_{2})^{Q_{1}}r_{2}^{Q_{2}-Q_{1}}(Q_{1}<Q_{2}))
		\end{displaymath}
		is regular for $|r_{1}|<|\frac{1}{r_{2}}|$, for fixed $r_{2}$ with $|r_{2}|\leq 1$. Taking into account that $T(r_{1},r_{2})=0$, then, for $|r_{2}|<1$, $\mathcal{R}\mathit{e}(1-\frac{1}{r_{2}})\geq 0$, Equation \eqref{fun11} yields
		\begin{align}
		\mathbb{E}(r_{2}^{Q_{2}}(Q_{1}=0))=
		&\left (-\frac{(r_{2}-r_{1})\Phi_{0}(r_{1}r_{2})}{2[r_{1}r_{2}(q-1)-qr_{2}+r_{1}]}\right.\nonumber\\
		&+\left.w_{0,0}\frac{r_{1}\left[2\bar{\lambda}a^{2}(r_{2}-1)+\lambda\bar{a}[a(r_{2}-r_{1})+(q-1)r_{2}(1-r_{1})]\right]}{2\bar{\lambda}a\bar{a}[r_{1}r_{2}(q-1)-qr_{2}+r_{1}]}\right)\Bigg|_{r_{1}=\delta_{0}(r_{2})},
		\label{fun13}
		\end{align}
		where $q=a/\bar{a}$. Similarly, from \eqref{fun12}, taking into account that $T(r_{2},r_{1})=0$, then, for $|r_{1}|<1$, $\mathcal{R}\mathit{e}(1-\frac{1}{r_{1}})\geq 0$,
		\begin{align}
		\mathbb{E}(r_{1}^{Q_{1}}(Q_{2}=0))=
		&\left(-\frac{(r_{1}-r_{2})\Phi_{0}(r_{1}r_{2})}{2[r_{1}r_{2}(q-1)-qr_{1}+r_{2}]}\right.\nonumber\\
		&+\left.w_{0,0}\frac{r_{2}\left[2\bar{\lambda}a^{2}(r_{1}-1)+\lambda\bar{a}[a(r_{1}-r_{2})+(q-1)r_{1}(1-r_{2})]\right]}{2\bar{\lambda}a\bar{a}[r_{1}r_{2}(q-1)-qr_{1}+r_{2}]}\right)\Bigg|_{r_{2}=\delta_{0}(r_{1})}.
		\label{fun14}
		\end{align}
		For $|r|<1$, $\Phi_{0}(r)$ is regular, and $\mathbb{E}(r_{2}^{Q_{2}}(Q_{1}=0))$ is regular for $|r_{2}|<1$. Thus, the right-hand side in \eqref{fun13} can be continued analytically into $|r_{2}|<1$, $\mathcal{R}\mathit{e}(1-\frac{1}{r_{2}})\leq 0$. Since $\Phi_{0}(r)$ is well defined for $|r|<1$ and $\mathcal{R}\mathit{e}(1-\frac{1}{r})\leq 0$, we conclude that $\Phi_{0}(\delta_{1}(r_{2})r_{2})$ can be analytically continued into $|r_{2}|<1$, $\mathcal{R}\mathit{e}(1-\frac{1}{r_{2}})\leq 0$. Since $\mathbb{E}(r_{2}^{Q_{2}}(Q_{1}=0))$ is real for $r_{2}\in \mathcal{G}_{1}$, Equation \eqref{fun13} yields for $r_{2}\in \mathcal{G}_{1}$,
		\begin{equation}\label{bv1}
		\begin{array}{l}
		\mathcal{I}\mathit{m}\left[v(r_{1},r_{2})\Phi_{0}(r_{1}r_{2})\right]\bigg\rvert_{r_{1}=\delta_{0}(r_{2})}=\mathcal{I}\mathit{m}\left[w_{0,0}f(r_{1},r_{2})\right]\bigg\rvert_{r_{1}=\delta_{0}(r_{2})},
		\end{array}
		\end{equation}
		Similarly, for $r_{1}\in \mathcal{G}_{2}$,
		\begin{equation}\label{bv2}
		\begin{array}{l}
		\mathcal{I}\mathit{m}\left[v(r_{2},r_{1})\Phi_{0}(r_{1}r_{2})\right]\bigg\rvert_{r_{2}=\delta_{0}(r_{1})}=\mathcal{I}\mathit{m}\left[w_{0,0}f(r_{2},r_{1})\right]\bigg\rvert_{r_{2}=\delta_{0}(r_{1})}.
		\end{array}
		\end{equation}
		where $v(r_{1},r_{2})=\frac{(r_{2}-r_{1})}{2[r_{1}r_{2}(q-1)-qr_{2}+r_{1}]}$, $f(r_{1},r_{2})=\frac{r_{1}\left[2\bar{\lambda}a^{2}(r_{2}-1)+\lambda\bar{a}[a(r_{2}-r_{1})+(q-1)r_{2}(1-r_{1})]\right]}{2\bar{\lambda}a\bar{a}[r_{1}r_{2}(q-1)-qr_{2}+r_{1}]}$.
		
		From the discussion so far, and due to the symmetry, it is natural to consider in what follows only one of the above boundary conditions, say \eqref{bv1}. Note that $\mathbb{E}(r_{2}^{Q_{2}}(Q_{1}=0))$ can be obtained by \eqref{fun13} upon deriving $\Phi_{0}(\cdot)$. Then, having this information, $\mathbb{E}(r_{1}^{Q_{1}}r_{2}^{Q_{2}}(Q_{1}<Q_{2}))$ is obtained by \eqref{fun11}. Similarly, we derive $\mathbb{E}(r_{1}^{Q_{1}}(Q_{2}=0))$ by \eqref{fun14}. Then, using \eqref{fun12}, we finally obtain $\mathbb{E}(r_{1}^{Q_{1}}r_{2}^{Q_{2}}(Q_{1}>Q_{2}))$. As a consequence, by obtaining the \textit{key element} $\Phi_{0}(\cdot)$, we are able to  derive $$\mathbb{E}(r_{1}^{Q_{1}}r_{2}^{Q_{2}}):=\mathbb{E}(r_{1}^{Q_{1}}r_{2}^{Q_{2}}(Q_{1}>Q_{2}))+\mathbb{E}(r_{1}^{Q_{1}}r_{2}^{Q_{2}}(Q_{1}<Q_{2}))+\mathbb{E}(r_{1}^{Q_{1}}r_{2}^{Q_{2}}(Q_{1}=Q_{2})).$$
		
		From Lemma \ref{lemgf2}, we know that when $r_{2}\in G_{1}$, then $\delta_{0}(r_{2})\in\mathcal{M}$; following \cite[Sect. 10.3, p. 204]{fayolle},  $\mathcal{M}$ is an ellipse. To proceed, we introduce the mapping $\mathcal{H}:\ r_{2}\to z,\,z=x+iy=r_{2}\delta_{0}(r_{2})$, $r_{2}\in \mathcal{G}_{1}$, and set $\mathcal{E}:=\mathcal{H}(\mathcal{G}_{1})$; \cite{cohenboxma}. Then, \eqref{bv1} is reduced to
		\begin{equation}\label{bvv}
		\mathcal{R}\mathit{e}\left[i\mathcal{\beta}(z)\Phi_{0}(z)\right]=\mathcal{\eta}(z),\,z\in\mathcal{E},
		\end{equation}
		where $\mathcal{\beta}(z)$, $\mathcal{\eta}(z)$ are the translations of $v(\cdot)$ and the right-hand side of \eqref{bv1} under the mapping $\mathcal{H}$\footnote{Some discussion about the possible poles of $\Phi_{0}(\cdot)$, i.e., the zeros of $\beta(\cdot)$ in $\mathcal{E}^{+}\cap\mathcal{D}^{c}$, where $\mathcal{D}=\{t\in\mathbb{C}:|t|\leq1\}$, and $\mathcal{G}^{+}$ in the interior domain bounded by the contour $\mathcal{G}$ must be made. To enhance the readability, we assume hereon that there are no such zeros. In any case, the analysis can be generalised directly.}. The usual procedure is to consider this problem to the unit circle $\mathcal{C}$. In general, it is hard to explicitly construct the conformal mappings. However, there is an efficient way to numerically obtain them via Theodorsen's procedure\footnote{Alternatively, one can follow \cite{neh} and define a conformal mapping using Jacobi elliptic functions.}; for further details see \cite{cohenboxma}, Sect. IV.1.3. Let the conformal mapping $t=\theta_{0}(z):\mathcal{E}^{+}\to\mathcal{C}^{+}$, and its inverse $z=\theta_{1}(t):\mathcal{C}^{+}\to\mathcal{E}^{+}$. Then, \eqref{bvv} is reduced to: Find a function $\Omega(t):=\Phi_{0}(\theta_{1}(t))$ regular for $|t|<1$, continuous for $|t|\leq 1$ such that
		\begin{equation}\label{bvv1}
		\mathcal{R}\mathit{e}\left[i\mathcal{\beta}(\theta_{1}(t))\Omega(t)\right]=\mathcal{\eta}(\theta_{1}(t)),\,|t|=1.
		\end{equation}
		The following theorem provides an expression of the \textit{key element} $\Phi_{0}(\cdot)$ in terms of the general solution of a non-homogeneous Riemann-Hilbert boundary value problem \cite[Chapter IV]{ga}.
		\begin{theorem}
			For $\rho<1$, the key element $\Phi_{0}(\cdot)$ is the solution of the non-homogeneous Riemann-Hilbert problem on $\mathcal{C}$ defined in \eqref{bvv1}, and given by (cf. \cite[Section 29.3]{ga})
			\begin{equation}
			\Phi_{0}(\theta_{1}(t))=e^{i\sigma(t)}t^{\chi}\left[\frac{1}{2\pi i}\int_{|u|=1}e^{\omega_{1}(u)}\phi(u)\frac{u+t}{u-t}\frac{du}{u}+iK\right],\,|t|<1,
			\end{equation}
			where 
			$\chi=-\frac{1}{\pi}arg\{\mathcal{\beta}(z)\}_{z\in\mathcal{E}}$ is the index of the problem, 
			$K$ is a constant to be determined, 
			$\sigma(t)=\frac{1}{2\pi i}\int_{|u|=1}(\arctan\frac{\mathcal{R}\mathit{e}(\mathcal{\beta}(\theta_{1}(t)))}{\mathcal{I}\mathit{m}(\mathcal{\beta}(\theta_{1}(t)))}
			-\chi\arg u)\frac{t+u}{t-u}\frac{du}{u}$, 
			$\omega_{1}(t)=\mathcal{I}\mathit{m}(\sigma(t))$, 
			$\phi(t)=\frac{\mathcal{\eta}(\theta_{1}(t))}{|\mathcal{\beta}(\theta_{1}(t))|}$.
			
			If $\chi\leq 0$ there is at most one linearly independent solution. When $\chi=0$, $K$ can be determined from the value of $\Phi_{0}(\cdot)$ at the origin. If $\chi<0$, then $K=0$ and a solution exists if $\frac{1}{2\pi i}\int_{|u|=1}e^{\omega_{1}(u)}\phi(u)u^{-k-1}du=0$ for $k=0,1,...,-\chi-1.$ Note that $\Phi_{0}(z)=\Phi_{0}(\theta_{1}(\theta_{0}(z))))$.
		\end{theorem}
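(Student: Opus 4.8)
The plan is to recognise Equation~\eqref{bvv1} as a non-homogeneous Riemann--Hilbert (Hilbert) boundary value problem on the unit circle $\mathcal{C}$ for the unknown $\Omega(t)=\Phi_{0}(\theta_{1}(t))$, sought regular in $|t|<1$ and continuous on $|t|\le1$, and then to invoke the classical solution formula of \cite[Chapter~IV, Section~29.3]{ga}. First I would put \eqref{bvv1} in the canonical form $\mathcal{R}\mathit{e}\big[\overline{G(t)}\,\Omega(t)\big]=g(t)$, $|t|=1$, by reading off the coefficient from $i\beta(\theta_{1}(t))$ and the free term from $\eta(\theta_{1}(t))$; writing $G(t)=|G(t)|\,e^{i\arg G(t)}$ and dividing through by $|G(t)|$ reduces the problem to $\mathcal{R}\mathit{e}\big[e^{-i\arg G(t)}\Omega(t)\big]=\phi(t)$ with $\phi(t)=\eta(\theta_{1}(t))/|\beta(\theta_{1}(t))|$. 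For this reduction one needs that $\beta(\theta_{1}(t))$ has no zero on $|t|=1$ --- which holds under the standing assumption of the footnote that $\beta(\cdot)$ has no zeros on the relevant contour --- and that $\beta$, $\eta$ and $\theta_{1}$ are H\"older continuous; the latter follows from the analyticity of $\delta_{0}$, $\delta_{1}$ off the slit established in Lemmas~\ref{lemgf1}--\ref{lemgf3}, the explicit representation of $\mathcal{M}$ in Lemma~\ref{lemgf2}, and the smoothness of the conformal map produced by Theodorsen's procedure.

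Next I would compute the index $\chi=\mathrm{ind}\,G(t)=\tfrac{1}{2\pi}\big[\arg G(t)\big]_{\mathcal{C}}$. Since $\arg G(t)=\tfrac{\pi}{2}+\arg\beta(\theta_{1}(t))$ and $\theta_{1}$ maps $\mathcal{C}$ onto $\mathcal{E}=\mathcal{H}(\mathcal{G}_{1})$, the constant increment contributes nothing and $\chi=-\tfrac{1}{\pi}\arg\{\beta(z)\}_{z\in\mathcal{E}}$, which is the quantity in the statement. Following \cite[Section~29.3]{ga}, I would then construct the canonical solution of the associated homogeneous problem as $X(t)=e^{i\sigma(t)}t^{\chi}$, where $\sigma(t)$ is obtained by applying the Schwarz operator with kernel $\tfrac{t+u}{t-u}\tfrac{du}{u}$ to the single-valued continuous function $\arctan\tfrac{\mathcal{R}\mathit{e}(\beta(\theta_{1}(t)))}{\mathcal{I}\mathit{m}(\beta(\theta_{1}(t)))}-\chi\arg u$; a particular solution of the non-homogeneous problem is then the Schwarz integral of $e^{\omega_{1}(u)}\phi(u)$ against $\tfrac{u+t}{u-t}\tfrac{du}{u}$ with $\omega_{1}=\mathcal{I}\mathit{m}(\sigma)$, and superposing the homogeneous freedom $iKX(t)$ gives precisely the displayed formula. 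The Plemelj--Sokhotski formulas, together with the H\"older regularity secured above, confirm that $\Omega$ is regular in $|t|<1$, continuous up to the boundary, and satisfies \eqref{bvv1}.

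It remains to handle the solvability bookkeeping governed by the sign of $\chi$. When $\chi=0$ the single real constant $K$ is pinned down by prescribing the value of $\Phi_{0}(\cdot)$ at the origin (namely $\Phi_{0}(0)=w_{0,0}$), so the solution is unique. When $\chi<0$ the factor $t^{\chi}$ is singular at $t=0$, hence one must take $K=0$ and force the bracketed Cauchy-type integral to vanish to order $-\chi$ at the origin, which is exactly the set of conditions $\frac{1}{2\pi i}\int_{|u|=1}e^{\omega_{1}(u)}\phi(u)u^{-k-1}\,du=0$, $k=0,1,\dots,-\chi-1$; these are automatically satisfied here because a genuine (hence bounded) probabilistic solution is known to exist by the ergodicity established in Theorem~\ref{thm1}. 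Finally, undoing the conformal maps via $z=\theta_{1}(\theta_{0}(z))$ recovers $\Phi_{0}(z)$ on $\mathcal{E}^{+}$, and then Equations~\eqref{fun13}, \eqref{fun14}, \eqref{fun11}, \eqref{fun12} express $\mathbb{E}(r_{2}^{Q_{2}}(Q_{1}=0))$, $\mathbb{E}(r_{1}^{Q_{1}}(Q_{2}=0))$ and the partial bivariate PGFs in terms of $\Phi_{0}(\cdot)$, yielding $\mathbb{E}(r_{1}^{Q_{1}}r_{2}^{Q_{2}})$.

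The main obstacle is not the algebra leading to the solution formula --- that is routine once the problem is in canonical form --- but verifying the hypotheses that make the Riemann--Hilbert machinery of \cite{ga} applicable: the non-vanishing and H\"older continuity of $\beta(\theta_{1}(t))$ on $\mathcal{C}$, the smoothness of the contour $\mathcal{E}$ and of Theodorsen's conformal map, and the correct identification of the index $\chi$ with the argument increment of $\beta$ along $\mathcal{E}$; once these are in place, the resolution of the residual constant (for $\chi=0$) or of the orthogonality conditions (for $\chi<0$) by the already-established probabilistic existence is immediate.
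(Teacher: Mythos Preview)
Your proposal is correct and follows essentially the same route as the paper: the paper does not give an independent proof but simply states the theorem as a direct application of the classical Riemann--Hilbert solution formula, citing \cite[Section~29.3]{ga}. Your write-up supplies more detail than the paper does --- in particular the reduction to canonical form, the identification of the index via the argument increment of $\beta$ along $\mathcal{E}$, and the bookkeeping for $\chi\le 0$ --- but this is precisely the content of the cited reference, so nothing is missing or divergent.
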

	\end{description}

	\section{Comparison of methods}\label{Sec:num}
	In this section, we compare the compensation approach and PSA method on the basis of their performance for the system at hand. The comparison is performed in terms of numerical accuracy (absolute difference in the obtained results by both methods) and time complexity. Algorithms \ref{Compensation_Algo} and \ref{PSA_New} are designed so as to compute any performance measure, which depends on the equilibrium distribution, given a desired precision $\e$.
	
	\subsection{Comparison of numerical accuracy}
	With the compensation approach, we can compute an explicit expression of the equilibrium distribution as a (infinite) linear combination of product-form terms. We have proven, that the corresponding truncated linear combinations provide asymptotic expansions which improve as we include more terms. In this perspective, we can relatively control the error of the approach. However, for PSA, to the best of our knowledge, there exists no error bound due to the lack of theoretical support for this approach, see \cite{blanc1987numerical, blanc1992}. The error produced by the PSA implementation can be controlled somewhat by including more terms of the series. On top of that,  it is sometimes unclear when this method diverges \cite{Koole1994}, but the radius of convergence of the power series can be extended using a transformation, cf. \eqref{ConformalMapping}. 
	
	In Table \ref{T4}, we depict the total expected sojourn time of a packet measured in time slots and the correlation coefficient  between the queue lengths of the two relays, for various values of the load $\rho$. 	The  total expected sojourn time of a packet  is computed as $\mathbb{E} [S] = \frac{1}{\lambda}\mathbb{E}[Q_1+Q_2]$, and as expected, as $\rho$ increases, so do the values of $\mathbb{E} [S]$. 
	The correlation coefficient between the queue relays is computed as $\mathbb{R}(Q_1, Q_2) = \f{\mathbb{E}[Q_1Q_2]- \mathbb{E}[Q_1] \mathbb{E}[Q_2]}{\sqrt{\mathbb{V}ar[Q_1] \mathbb{V}ar[Q_1]}}$. For the computations performed, we choose $\rho=0.1,0.4,0.7,0.9,0.95$, so as to cover lighter and heavier traffic results. Furthermore, we choose  $G = 1$ for the PSA implementation.
	
 As expected, as $\rho$ increases the correlation coefficient tends to one, and as the values of Table \ref{T4} indicate, it almost behaves like a linear function of $\rho$. Furthermore, it is evident, from the numerical results,  that both approaches produce similar outcomes (as long as the value for $\rho$ is away from the stability region boundary $\rho=1$) differing by approximately as much as the range of the precision, cf. column $\mid \mathrm{CA} -  \mathrm{PSA} \mid$. However, as $\rho$ approaches one, PSA becomes highly unstable, while the compensation approach is producing accurate results in the entire stability region. The numerical instability of  PSA can be explained observing that as $\rho\to1$, $\theta\to1$, cf. Equation \eqref{ConformalMapping}, indicating that the power series expression is approaching the boundary of the region of convergence. This can be overcome, to a certain degree, by further increasing the value $G\geq1$. 
	
	Note that the compensation approach only works in the case of two relays, while PSA can be extended to any finite number of relays. As such, we conclude that the compensation approach is more suitable when working with two relays, while  PSA is generalisable to a system with more than  two relays. 
	
	\begin{table}[h!]
		\begin{center}
			\scalebox{0.9}{
				\begin{tabular}{ c | c | c |c}
					~&  $\mathbb{E} [S]$ & $\mathbb{R}(Q_1,Q_2)$   &  \\ 
					\hline 
					\begin{tabular}{c  }
						$\rho$  \\
						\hline
						$0.1$ \\ $0.4$ \\ $0.7$ \\ $0.9$ \\ $0.95$ \end{tabular} &  \begin{tabular}{c | c | c}
						CA & PSA & $\mid \mathrm{CA} -  \mathrm{PSA} \mid$  \\
						\hline
						$1.222$ & 1.222 & $1.9 \times 10^{-14}$ \\		
						$2.333$ &  $2.333$  & $6.8 \times 10^{-8}$ \\	
						$5.666$ &  $5.666$ & $6.1 \times 10^{-4}$\\	
						$19.00$  & $17.858$ & $1.412$\\
						$38.976$ & $22.450$ & $16.525$
					\end{tabular} & 	\begin{tabular}{c | c |c }
						CA & PSA & $\mid \mathrm{CA} -  \mathrm{PSA} \mid$   \\
						\hline
						$0.136$ & $0.136$ & $5.6 \times 10^{-14}$ \\		
						$0.468$ & $0.468$ & $2.1 \times 10^{-7}$\\	
						$0.793$ & $0.792$ & $2.9 \times 10^{-4}$\\	
						$0.969$ & $0.950$ & $1.9 \times 10^{-2}$\\
						$0.991$ &  $0.924$ &  $6.7 \times 10^{-2}$
					\end{tabular} & \begin{tabular}{c  }~~~~~~~~ $\e$
						\textcolor{white}{$R_{\mathrm{error}}$} \\
						\hline
						$10^{-30}$ \\  $10^{-25}$ \\  $10^{-20}$ \\  $10^{-15}$ \\ $10^{-10}$  \end{tabular}	
			\end{tabular}}
			\caption{Total expected sojourn time of a packet and  the correlation between the queue relays.}
			\label{T4}
		\end{center}
	\end{table}

	\subsection{Comparison of time complexity}
	
	From the numerical implementation, it is notable that both approaches provide accurate results to a desired precision. In both approaches, the time complexity  of the corresponding algorithm depends on $\e$ through the determination of $T_{\mathrm{ca}}$ or $T_{\mathrm{psa}}$, cf. Step \ref{Algo_CA_Step4} or Step \ref{Algo_PSA_Step30}, respectively, which we can control. Equating $\e$ for both algorithms, we can compare the methods in terms of their algorithmic time complexity, so as to characterise the performance speed of the two approaches. We describe time complexity in the Big-$O$ notation as a function of the input size, i.e., $O(f(\cdot))$ is measured as the maximum number of elementary steps needed to perform the algorithm, provided that each step is executed in constant (or equal) time.  This way, the time required for the algorithm is described independently of the numerical implementation.  
	
	We discuss the time complexity of the two approaches separately: For the compensation approach, see Algorithm \ref{Compensation_Algo}, for a series truncation level $N_{\mathrm{ca}}$ and state-space truncation $T_{\mathrm{ca}}$, the algorithm  converges with order $O\left( N_{\mathrm{ca}}\left(  T_{\mathrm{ca}}\right)^{2.376}\right)$. This order is obtained as follows:
	\begin{enumerate}
		\item[(i)]  In Steps \ref{Algo_CA_Step2} and \ref{Algo_CA_Step5}, we need to compute recursively the $2(N_{\mathrm{ca}}+1)$ roots of Equation  \eqref{eq}. This can be done by using the Bisection method or the False position (aka regula falsi)  method. With both methods, we are able to choose the bisection intervals within the interval $(0, \g_0)$, which results in time complexity $O(\log(\g_0))$ for the computation of a single root. In order to compute all the roots, the Bisection method needs to be repeated at least $2(N_{\mathrm{ca}}+1)$ times. Thus, the time complexity for the calculation of  all the roots is of order  $O(N_{\mathrm{ca}} \log(\g_0))$. 
		
		\item[(ii)] In Step \ref{Algo_CA_Step6}, we need to compute recursively the coefficients $c_i, e_{i, o}, e_{i, 1}$ and $d_i$, $i=0,1,\ldots,N_{\mathrm{ca}}$. To this purpose, we need to solve a system of equations. The system is solved implementing the Coppersmith-Winograd algorithm \cite{Coppersmith1987}, which has a complexity $O(N_{\mathrm{ca}}^{2.376})$. 
		
		\item[(iii)] In Step \ref{Algo_CA_Step7}, we need to compute the equilibrium probabilities $\pi_{k, l}^{(N_{\mathrm{ca}})}$ using \eqref{on-numer} and \eqref{on2-numer}, which has time complexity $O(N_{\mathrm{ca}} (T_{\mathrm{ca}}/2))$.
		
		\item[(iv)]  In Step \ref{Algo_CA_Step8}, we need to solve a system of equations. Using again the Coppersmith-Winograd algorithm. This step  has a complexity of $O( (T_{\mathrm{ca}}/2)^{2.376})$, and needs to be performed at least $N_{\mathrm{ca}}$ times. Thus, the time complexity of the entire step is of order $O\left( N_{\mathrm{ca}}\left(  T_{\mathrm{ca}} /2\right)^{2.376}\right)$. 
		
		\item[(v)]  In the construction of Algorithm \ref{Compensation_Algo}, we have considered a part of the state-space region in which we directly use  Equation \eqref{on2-numer}, cf. Step \ref{Algo_CA_Step6}, and a part of the state-space region in which we solve a linear system of the  equations cf. Step \ref{Algo_CA_Step7}. The sizes of these regions can be at most equal to $N_{\mathrm{ca}}$. Thus in the worst case this results in Step \ref{Algo_CA_Step7} requiring $O(N_{\mathrm{ca}} T_{\mathrm{ca}})$ and  Step \ref{Algo_CA_Step8} requiring $O\left( N_{\mathrm{ca}}\left(  T_{\mathrm{ca}}\right)^{2.376}\right)$, respectively. Note that (iv) has the dominant time complexity, and under the worst case scenario, it yields a time complexity of $O\left( N_{\mathrm{ca}}\left(  T_{\mathrm{ca}}\right)^{2.376}\right)$.
	\end{enumerate}

Analogously,  for the power series algorithm, see Algorithm \ref{PSA_New}, for a series truncation level $N_{\mathrm{psa}}$ and state-space truncation $T_{\mathrm{psa}}$, the algorithm  converges with order $O\left((N_{\mathrm{psa}}+1)\left((T_{\mathrm{psa}}+1)^2/4\right)^{2.376}\right)$. This order is obtained as follows:
	
\begin{enumerate}
	\item[(i)]	In Step \ref{Algo_PSA_Step5}, for $0 \leq n \leq N_{\mathrm{psa}}$ and $0\leq k, l \leq T_{\mathrm{psa}}$, we compute  $u(n, k, l)$. This step determines the dominant time complexity in PSA.  These coefficients are obtained solving a system of equations \eqref{CBN1}-\eqref{CBN7}, in Appendix \ref{ApendixPSA}, by  implementing for example the Coppersmith-Winograd algorithm \cite{Coppersmith1987}. Note that this system of equations reveals that (i) the series truncation level should be chosen such that $N_{\mathrm{psa}} \leq T_{\mathrm{psa}}$, and (ii) the state-space truncation should contain all states $0\leq k+l \leq T_{\mathrm{psa}}$. All in all, this yields that the complexity is $O\left((N_{\mathrm{psa}}+1)\left((T_{\mathrm{psa}}+1)^2/4\right)^{2.376}\right)$.
		\end{enumerate}
	
	In the above discussion, we have demonstarted that the compensation algorithm has better big-$O$ time complexity than PSA.

	\subsection{Comparison of JSRQ to other routing protocols}
	The JSRQ routing protocol, balances  the load between the two relays, but due to this balancing, it also seems to increase collisions. For this reason, in this section, we compare the JSRQ routing protocol to a single server system. To make the two systems comparable, we consider the arrival and departure probabilities of the single server system to be the same as in the JSRQ system,  i.e., $\l$ and $a$. Furthermore, in order to identify the comparable region for both models we use the corresponding stability conditions. The single server system is stable if $\lambda < a$,  which implies $a \in (\l, 1)$. The JSQR system is stable for $\lambda < 2a(1-a)$. The stability region of the JSQR system can be equivalently written as 
	\begin{equation} 
	a \in (a^{-}, a^{+}) \equiv \left( \f{1-\sqrt{1-2\l}}{2}, \f{1+\sqrt{1-2\l}}{2}\right), \text{ for  }\l < 1/2.\label{Eq: JSQR_a_pm}
	\end{equation}
	The comparison between the protocols is discussed on the basis of the following points:
	
	\begin{enumerate}
		\item[(i)]\textbf{stability region comparison:} From the above discussion, it becomes evident that the length of the stability region of the single server system is $1-\l$ and of the JSRQ system it is $\sqrt{1-2\l}$. For $\l < 1/2$, we observe that the region of the single server system contains the region of the JSQR system.

		\item[(ii)]\textbf{Total expected queue length comparison:} For the single server system, the total expected queue length goes to infinity as $a\downarrow\lambda$  and to zero as $a\uparrow1$. For the JSQR system, the total expected queue length goes to infinity as $a\to a^{+}$ or $a\to a^{-}$, defined in Equation \eqref{Eq: JSQR_a_pm}. Furthermore, for $\lambda<1/2$, it is easy to show that $a^-<\lambda<1/2<a^+$. Moreover, the two systems have identical total expected queue lengths if $a=1/2$. Note that for $a=1/2$ the load of the single server system, $\lambda \bar{a}/\bar{\lambda}a$, is equal to the load of the JSRQ system, $\lambda(\bar{a}^{2}+a^{2})/2\bar{\lambda}\bar{a}a$. All in all, the above analysis reveals that for $a<1/2$ the JSRQ outperforms the single server system, for $a=1/2$ the systems perform identically, while for $a>1/2$ the single server system performs better than the JSRQ system. See Figure \ref{Comparision_JSRQ_Singleserver}, for a depiction of the above general remarks.
		
		\begin{figure}[hhh!]
			\begin{center}
				\includegraphics[scale= 0.35]{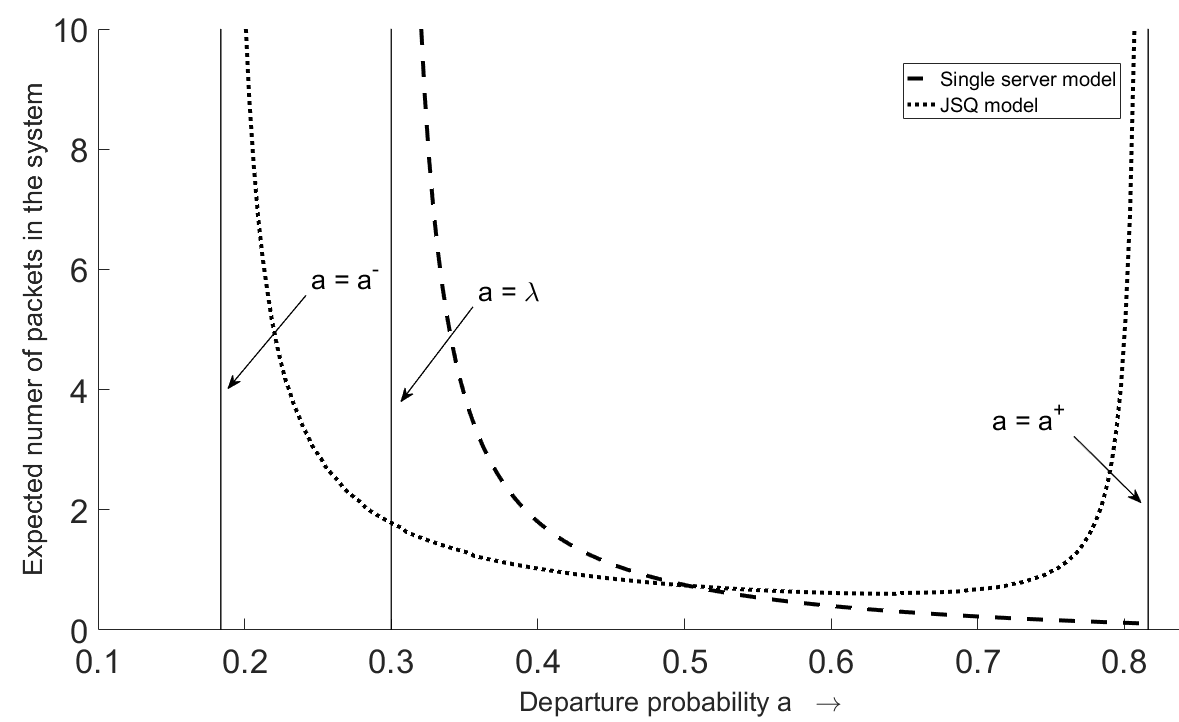}
			\end{center}	
			\caption{The expected number of packets $\mathbb{E}[Q_1+Q_2]$  are depicted on the $y$-axis as a function of the service probability $a$ (depicted on the $x$-axis) for fixed arrival probability $\l = 0.3$}
			\label{Comparision_JSRQ_Singleserver}
		\end{figure}
		
		\item[(iii)]\textbf{Consideration of collision:} Our model incorporates the phenomenon of collisions, whereas the single server model does not consider any collisions between the packets. Hence in the single server model the probability of collisions between the packets is zero, whereas in the considered model when both relays transmit packets in the same slot, the packets need to be retransmitted in a later slot. Communication networks in practice adopt the technique of retransmission in case of collisions to avoid losses, hence the considered model represents reality more closely than a single server model.
	\end{enumerate}

	\section{Conclusions and possible extensions}\label{Sec:con}
	In this work, we focused on the application oriented problem of characterising the queueing delay experienced in a slotted-time relay-assisted cooperative random access wireless network with collisions and join the shortest relay queue (JSRQ) routing protocol. Note that due to the collisions, there is strong interdependence among the queues of the relays, resulting in different service probabilities, when both relays are busy, than the service probability, when one of the relays is empty. Thus, the system at hand incorporates two features: the JSRQ feature and the coupled processor feature. For this system, we investigate the stability condition and apply three different methods for the computation of the equilibrium joint queue (relay) length distribution, namely the compensation approach , the power series algorithm (PSA), and the probability generating function approach. A detailed comparison of the compensation approach and PSA is presented. More importantly, we have applied the compensation approach to a random walk on the positive quadrant with transitions to a bounded region of neighbours, extending the framework of the compensation approach to a wider class of random walks (than the nearest neighbour for which the approach was originally developed).
	
	In a future work, we plan to generalise this work in several directions. A challenging task is related to the equilibrium analysis of a cooperative network with a queue-aware transmission protocol under which each relay configures its transmission parameters based on the status of the other. Such a protocol serves towards self-aware and intelligent networks. Moreover, we also plan to characterise the delay using a multi-packet reception model instead of the collision channel model. Under such a scheme, we can have successful transmissions even if multiple nodes (relays or source(s)) transmit simultaneously, which will definitely improve the throughput performance of the network. An interesting challenging task is the investigation of the queueing delay at a random access network with an arbitrary number of relay nodes under the JSRQ routing policy. Finally, it will be interesting to investigate service policies by taking into account the state of the network, for the ultimate goal of improving the system performance.

	\section{Acknowledgments}
	{\small The authors gratefully acknowledge useful discussions with Onno Boxma. The research of Mayank Saxena is supported by the NWO TOP-C1 project of the Netherlands Organisation for Scientific Research.  The work of Stella Kapodistria is supported by the NWO Gravitation Program NETWORKS of the Dutch government.}


	\bibliographystyle{abbrv}
	\bibliography{Final_version_30th_Bib}


	\begin{appendices}
		\section{An alternative way for the derivation of the stability condition}\label{stab}
		Denote by $A_{r}(n)$ the number of arrivals at relay node $r$, at the beginning of slot $n$ (i.e., $A(n)=A_{1}(n)+A_{2}(n)$, with $\mathbb{E}[A({n})]=\lambda$), and by $S_{r}(n)$ the number of departures from relay node $r$, $r=1,2$, at the end of slot $n$, $n \geq 0$. Then, the queue evolution is as follows
		\begin{displaymath}
		Q_{r}(n+1)=[Q_{r}(n)+A_{r}(n)-S_{r}(n)]^{+}.
		\end{displaymath}
		Equivalently, we can define a non-negative random variable $U_{r}(n)$, which denotes the unused service in a time slot, and rewrite the above equation as
		\begin{displaymath}
		Q_{r}(n+1)=Q_{r}(n)+A_{r}(n)-S_{r}(n)+U_{r}(n),
		\end{displaymath}
		where $U_{r}(n)=\max(0,S_{r}(n)-A_{r}(n)-Q_{r}(n))$.
		\begin{theorem}
			Assume that $\lambda<2a\bar{a}$. Then, $\{\bm{Q}(n),\,n \geq 0\}$ is a positive recurrent Markov chain under the JSRQ policy.
		\end{theorem}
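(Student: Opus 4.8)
The plan is to prove positive recurrence via the Foster--Lyapunov drift criterion applied to the quadratic test function $V(i,j)=i^{2}+j^{2}$, working directly with the additive recursion $Q_{r}(n+1)=Q_{r}(n)+A_{r}(n)-S_{r}(n)+U_{r}(n)$ rather than with the transition probabilities of Figure~\ref{rw}. The reason the quadratic function is the right choice (and $Q_{1}+Q_{2}$ is not) is that $V$ puts zero weight on the coordinate of a currently empty relay: because of the coupled-processor speed-up the drift of the total workload on the axes $\{j=0\}$ and $\{i=0\}$ need not be negative, whereas the drift of $V$ there is. The key algebraic step is that, since $U_{r}(n)$ is defined precisely so that $Q_{r}(n+1)=[Q_{r}(n)+A_{r}(n)-S_{r}(n)]^{+}$, one has $Q_{r}(n+1)^{2}\le (Q_{r}(n)+A_{r}(n)-S_{r}(n))^{2}$, which eliminates the reflection terms $U_{r}$ from the drift bound. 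Expanding the squares and summing over $r=1,2$,
\begin{equation*}
V(\bm{Q}(n+1))-V(\bm{Q}(n))\le\sum_{r=1}^{2}(A_{r}(n)-S_{r}(n))^{2}+2\sum_{r=1}^{2}Q_{r}(n)(A_{r}(n)-S_{r}(n)),
\end{equation*}
and since $A_{r}(n),S_{r}(n)\in\{0,1\}$ the first sum is at most $2$ almost surely.

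Taking conditional expectations given $\bm{Q}(n)=(i,j)$ then gives
\begin{equation*}
\mathbb{E}\big[V(\bm{Q}(n+1))-V(\bm{Q}(n))\mid\bm{Q}(n)=(i,j)\big]\le 2+2\big(i\,m_{1}(i,j)+j\,m_{2}(i,j)\big),
\end{equation*}
with $m_{r}(i,j):=\mathbb{E}[A_{r}(n)-S_{r}(n)\mid\bm{Q}(n)=(i,j)]$, so it remains to show that this linear functional tends to $-\infty$ along every recession direction of each of the six regions of spatial homogeneity. I would compute the $m_{r}$ from the two structural features encoded in \eqref{TProb1}--\eqref{TProb4}: JSRQ routes the (mean $\lambda$) arrival to the currently shorter relay (to either one with probability $1/2$ when the queues are equal), and at the end of a slot a non-empty relay succeeds with probability $a\bar{a}$ if the other relay is also non-empty and with probability $a$ if it is empty. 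In the interior both relays stay non-empty after the arrival, whence $m_{1}+m_{2}=\lambda-2a\bar{a}<0$: on the diagonal $\{i=j\ge1\}$ the functional equals $i(\lambda-2a\bar{a})\to-\infty$; in the angle $\{i>j\ge1\}$ it equals $-i a\bar{a}+j(\lambda-a\bar{a})$, which is bounded above by $-\eta\,i$ with $\eta:=\min\{a\bar{a},\,2a\bar{a}-\lambda\}>0$ (use $j<i$ when $\lambda\ge a\bar{a}$, and drop the then-negative $j(\lambda-a\bar{a})$ when $\lambda<a\bar{a}$); the angle $\{j>i\ge1\}$ is symmetric. On the horizontal axis $\{i\ge1,\,j=0\}$ the arrival goes to relay~$2$, hence $m_{1}(i,0)=-a(\bar{\lambda}+\lambda\bar{a})<0$ and the $j$-term vanishes, giving $-i\,a(\bar{\lambda}+\lambda\bar{a})\to-\infty$; the vertical axis is symmetric, and the origin is a single state. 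Thus there are $\epsilon>0$ and a finite set $A\subset\mathcal{S}$ with drift $\le-\epsilon$ off $A$, while on $A$ the drift --- and hence $\mathbb{E}[V(\bm{Q}(n+1))\mid\bm{Q}(n)=x]$ --- is finite because the per-coordinate increments are bounded by $2$. Since the chain is irreducible (which the hypothesis forces, as $2a\bar{a}>\lambda\ge0$ implies $0<a<1$), the Foster--Lyapunov criterion (see, e.g., \cite{fayolle}) yields positive recurrence, and $\lambda<2a\bar{a}$ is exactly the binding part of \eqref{ui}, so this recovers Theorem~\ref{thm1} by an independent argument.

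The drift expansion is routine; the step that needs care is the region-by-region evaluation of $m_{r}(i,j)$ on the boundaries. Two points are responsible for most of the work: under the EAS convention one must condition on the post-arrival occupancy when a relay is empty before the arrival (this is where the factor $\bar{\lambda}+\lambda\bar{a}$ in $m_{1}(i,0)$ comes from), and in the off-diagonal angles no single uniform linear bound exists without splitting on the sign of $\lambda-a\bar{a}$, which can be positive even though $\lambda<2a\bar{a}$. Beyond this bookkeeping I anticipate no real obstacle: the quadratic Lyapunov function linearises all six regions simultaneously, and in each of them the negative drift is governed by the same quantity $2a\bar{a}-\lambda$, together with the boundary speed-up, that already drives Theorem~\ref{thm1}.
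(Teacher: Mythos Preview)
Your proposal is correct and uses the same machinery as the paper: the quadratic Lyapunov function $V(i,j)=i^{2}+j^{2}$, the reflection inequality $Q_{r}(n+1)^{2}\le (Q_{r}+A_{r}-S_{r})^{2}$ to eliminate $U_{r}$, and the Foster--Lyapunov criterion. The difference is only in how the drift bound is closed. Where you compute the conditional means $m_{r}(i,j)$ region by region and split on the sign of $\lambda-a\bar{a}$ in the off-diagonal angles, the paper short-circuits all of this with two global inequalities: first, under JSRQ the arrival always goes to the shorter queue, so $\mathbb{E}[Q_{1}A_{1}+Q_{2}A_{2}\mid\bm{Q}]=\lambda\min(Q_{1},Q_{2})\le \tfrac{\lambda}{2}(Q_{1}+Q_{2})$; second, a nonempty relay always succeeds with probability at least $a\bar{a}$ regardless of the other queue, so $\mathbb{E}[Q_{r}S_{r}\mid\bm{Q}]\ge a\bar{a}\,Q_{r}$ (the term vanishes when $Q_{r}=0$). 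These combine to give drift $\le 2-(2a\bar{a}-\lambda)(Q_{1}+Q_{2})$ uniformly over the whole state space, with no region split and no separate treatment of the axes. Your route yields sharper region-specific constants and makes the EAS speed-up on the axes explicit; the paper's route avoids exactly the bookkeeping you flag in your last paragraph and makes the roles of JSRQ (the $\min$) and of the collision coupling (the $a\bar{a}$ floor) transparent.
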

		\begin{proof}
			Consider the Lyapunov function
			\begin{displaymath}
			V(\bm{Q}(n))=Q_{1}(n)^{2}+Q_{1}(n)^{2}.
			\end{displaymath}
			Then,
			\begin{displaymath}
			\begin{array}{rl}
			V(\bm{Q}(n+1))-V(\bm{Q}(n))=&\left((Q_{1}+A_{1}-S_{1}+U_{1})^{2}-Q_{1}^{2}\right)+ \left((Q_{2}+A_{2}-S_{2}+U_{2})^{2}-Q_{2}^{2}\right)\\
			\leq & \left((Q_{1}+A_{1}-S_{1})^{2}-Q_{1}^{2}\right)- \left((Q_{2}+A_{2}-S_{2})^{2}-Q_{2}^{2}\right)\\
			=&(A_{1}-S_{1})^{2}+2Q_{1}(A_{1}-S_{1})+(A_{2}-S_{2})^{2}+2Q_{2}(A_{2}-S_{2}),
			\end{array}
			\end{displaymath}
			where for the sake of readability we suppressed in the notation the $n$. Note that for the system at hand $(A_{r}(n)-S_{r}(n))^{2}\leq 1$, due to the Bernoulli arrivals, and 
			\begin{displaymath}
			\begin{array}{rl}
			\mathbb{E}(S_{1})=&\mathbb{E}(S_{1}|Q_{2}>0)P(Q_{2}>0)+\mathbb{E}(S_{1}|Q_{2}=0)P(Q_{2}=0)\\
			=&a\bar{a}(1-P(Q_{2}=0))+aP(Q_{2}=0)\\
			=&a\bar{a}+a^{2}P(Q_{2}=0).
			\end{array}
			\end{displaymath}
			Note that due to the symmetry of the model $\mathbb{E}(S_{1})=\mathbb{E}(S_{2}):=\mathbb{E}(S)$. Note also that
			\begin{displaymath}
			a\bar{a}\leq \mathbb{E}(S_{r})\leq a,\,r=1,2.
			\end{displaymath}	
			Thus,
			\begin{displaymath}
			\begin{array}{l}
			\mathbb{E}(V(\bm{Q}(n+1))-V(\bm{Q}(n))|\bm{Q}(n)=\bm{Q})  \\\leq 2+2\mathbb{E}(Q_{1}A_{1}+Q_{2}A_{2}|\bm{Q}(n)=\bm{Q})-2[Q_{1}+Q_{2}]\mathbb{E}(S)  \\
			\leq 2-2(Q_{1}+Q_{2})a\bar{a}+2\min[Q_{1},Q_{2}]\mathbb{E}(A(n))\\
			=2-2(Q_{1}+Q_{2})a\bar{a}+2\min[Q_{1},Q_{2}]\lambda\\
			\leq 2-2(Q_{1}+Q_{2})a\bar{a}+2(Q_{1}\frac{a\bar{a}-\epsilon/2}{\lambda}+Q_{2}\frac{\bar{a}a-\epsilon/2}{\lambda})\lambda\\
			=2-\epsilon(Q_{1}+Q_{2}),
			\end{array}
			\end{displaymath}
			for $\epsilon=2a\bar{a}-\lambda>0$.
		\end{proof}
		
		\section{Compensation approach}
		\label{ApendixCA}

		\begin{lemma}\label{lem1s}
			For $\rho=\frac{\lambda(\bar{a}^{2}+a^{2})}{2\bar{\lambda}\bar{a}a}<1$, the balance equations \eqref{in}-\eqref{hor3} have a unique solution of the form
			\begin{equation}\label{eq_hat_p-geom}
			\pi_{k,l}=\rho^{2k}q(l),\,k,l\geq0,
			\end{equation}
			with $q(l)$ non zero such as $\sum_{l=0}^{\infty}\rho^{-2l}q(l)<\infty$.
		\end{lemma}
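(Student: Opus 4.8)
The idea is to restrict the system \eqref{in}--\eqref{hor3} to product solutions $\pi_{k,l}=\gamma^{k}q(l)$, reduce it to a scalar problem for the sequence $(q(l))_{l\ge0}$, and then show that the weighted summability requirement $\sum_{l\ge0}\rho^{-2l}q(l)<\infty$ forces both $\gamma=\rho^{2}$ and a one-dimensional family of admissible sequences $q$.

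First I would substitute $\pi_{k,l}=\gamma^{k}q(l)$ into the interior equation \eqref{in} and divide by $\gamma^{k-1}$, which gives, for $l\ge3$, the third-order linear recurrence
\[
\bar{\lambda}\bar{a}a\,\gamma^{2}q(l-1)-\big(1-\bar{\lambda}(\bar{a}^{2}+a^{2})-\lambda a\bar{a}\big)\gamma\,q(l)+\big(\bar{\lambda}\bar{a}a\,\gamma+\lambda(\bar{a}^{2}+a^{2})\big)q(l+1)+\lambda\bar{a}a\,q(l+2)=0,
\]
whose characteristic equation (put $q(l)=\delta^{l}$) is exactly the kernel \eqref{eq}. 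For fixed $\gamma$, \eqref{eq} is a cubic in $\delta$; by Lemma \ref{lemma1} a unique root, say $\delta_{0}=\delta_{0}(\gamma)$, lies in $\{\,|\delta|<|\gamma|\,\}$, and for $\gamma\in(0,1)$ I would check that the other two lie strictly outside the disc of radius $|\gamma|$. Since with $\gamma=\rho^{2}$ the weight in the statement is exactly $|\gamma|^{-l}$, absolute convergence of $\sum_{l}\rho^{-2l}q(l)$ kills the components of $q$ along the two large roots, so $q(l)=d_{0}\,\delta_{0}^{l}$ for all large $l$; running the recurrence backwards (legitimate because $\delta_{0}$ solves the characteristic equation) propagates $q(l)=d_{0}\,\delta_{0}^{l}$ down to every $l\ge2$.

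Next I would insert $\pi_{k,l}=\gamma^{k}q(l)$ into the three boundary equations \eqref{hor1}--\eqref{hor3}; after dividing by $\gamma^{k-1}$ each becomes a linear relation between $q(0),q(1)$ and $q(2),q(3),q(4)$. Substituting $q(l)=d_{0}\delta_{0}^{l}$ for $l=2,3,4$ and repeatedly reducing powers of $\delta_{0}$ via the characteristic equation, two of these relations determine $q(0)$ and $q(1)$ uniquely (once $d_{0}$ is normalised to $1$) as explicit functions of $\gamma$ and $\delta_{0}(\gamma)$, while the remaining one collapses to a single scalar compatibility condition $\Xi(\gamma)=0$. It then remains to verify that $\gamma=\rho^{2}$ solves $\Xi(\gamma)=0$ --- substitute $\gamma=\rho^{2}$ and the corresponding root $\delta_{0}(\rho^{2})$ of \eqref{eq} and simplify using $\rho=\lambda(\bar{a}^{2}+a^{2})/(2\bar{\lambda}\bar{a}a)$ --- and that it is the only admissible root of $\Xi$ in $(0,1)$; uniqueness of $q$ up to the scalar $d_{0}$ is by then already established. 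As a sanity check, the change of variables gives $2k+l=Q_{1}+Q_{2}$, so the $Geo/Geo/2$ heuristic preceding Proposition \ref{propDecay} explains why the geometric rate in $k$ must be the square $\rho^{2}$ of the one-queue decay rate.

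The main obstacle is this last point: pinning down $\gamma=\rho^{2}$ from the compatibility condition. A priori the three boundary balance equations over-determine the pair $(\gamma,\delta_{0}(\gamma))$ by one equation, so a solution exists only thanks to the particular ``JSRQ plus coupled-processor'' structure of the transition probabilities \eqref{TProb1}--\eqref{TProb4}; isolating the cancellation that yields exactly $\rho^{2}$, with no spurious extra root inside the unit disc, is the delicate step and is where one follows the template of \cite{ad0,ad1} most closely. A secondary technical point is the root-location claim used in the convergence step, namely that the two non-selected roots of \eqref{eq} have modulus strictly larger than $|\gamma|$; I would settle this by a continuity (Rouch\'e) argument in $\gamma$, starting from the explicitly solvable limit $\gamma\to0$, where \eqref{eq} degenerates to $\delta^{2}\big(\lambda\bar{a}a\,\delta+\lambda(\bar{a}^{2}+a^{2})\big)=0$ with non-zero root $-(\bar{a}^{2}+a^{2})/(\bar{a}a)$ of modulus at least $2$.
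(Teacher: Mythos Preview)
Your algebraic route is sound in outline, but it is genuinely different from the paper's argument, and the step you flag as the main obstacle --- isolating $\gamma=\rho^{2}$ as the unique admissible root of a compatibility condition $\Xi(\gamma)=0$ --- is precisely what the paper avoids by a structural rather than computational device.

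The paper argues via an auxiliary \emph{modified model}: the same random walk, but on the enlarged grid $\{(k,l):k\ge0,\,l\ge0\}\cup\{(k,l):k<0,\,k+l\ge0\}$, with the interior and horizontal-boundary transitions unchanged and the equations along $k+l=0$ taken to coincide with the interior ones (so there is effectively no vertical boundary). This modified chain is invariant under the shift $k\mapsto k+1$, so its equilibrium distribution is automatically of the form $\hat\pi_{k,l}=\gamma^{k}\hat\pi_{0,l}$ for some $\gamma\in(0,1)$. To pin down $\gamma$, the paper uses a level-cut argument: with levels $\{(k,l):2k+l=L\}$, the global balance across consecutive levels reads $\lambda(\bar a^{2}+a^{2})\hat\pi_{L}=2\bar\lambda\bar a a\,\hat\pi_{L+1}$, while the geometric structure gives $\hat\pi_{L+1}=\gamma\,\hat\pi_{L-1}$; combining these forces $\gamma=\rho^{2}$ immediately. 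Existence and uniqueness of the product-form solution to \eqref{in}--\eqref{hor3} then follow from existence and uniqueness of the modified model's stationary distribution, with no algebra beyond these two one-line identities.

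By contrast, your plan reduces everything to the scalar equation $\Xi(\gamma)=0$ and then must \emph{verify} both that $\rho^{2}$ is a root and that no other root lies in $(0,1)$; you rightly note this is delicate, since $\Xi$ depends on the implicitly defined kernel root $\delta_{0}(\gamma)$ and the simplification is not routine. The paper's translation-invariance plus level-balance trick bypasses this completely and delivers uniqueness for free. One minor remark: your ``secondary technical point'' (that the two non-selected roots of \eqref{eq} lie strictly outside $|\delta|=|\gamma|$) is already contained in the Rouch\'e argument of Lemma~\ref{lemma1}, since the strict inequality $|f|<|g|$ on $|z|=1$ rules out roots on the circle itself; no separate continuity-in-$\gamma$ argument is needed.
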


		\begin{proof}
			We consider a modified model, which is closely related to the $\{\tilde{\bm{Q}}(n),\, n \geq 0\}$ model and it has the same asymptotic behaviour. This modified model corresponds to a random walk on a slightly different grid, namely
			\begin{displaymath}
			\{(k,l):k\geq0,l\geq0\}\cup\{(k,l):k<0,k+l\geq 0\}.
			\end{displaymath}
			In the interior and on the horizontal boundary, the modified model has the same transition rates as the $\{\tilde{\bm{Q}}(n),\, n \geq 0\}$ model. A characteristic feature of the modified model is that its balance equations for $k+l = 0$ are exactly the same as the ones in the interior (in this sense the modified model has no ``vertical'' boundary equations) and both models have the same stability region. Therefore, the balance equations for the modified model are Equations \eqref{in}-\eqref{hor3} for all $k+l\geq0$, $k\in \mathbb{Z}$ with only the equation for state $(0,0)$ being different due to the incoming rates from the states with $k+l=0$, $k\in \mathbb{Z}$.
			
			Observe  that the modified model, restricted to an area of the form $\{(k,l):\, k\geq k_0-l,l\geq 0,k_0=1,2,\ldots\}$ embarked by a line parallel to the $k+l=0$ axis, yields the exact same process. Hence, we can conclude that the equilibrium distribution of the modified model, say $\hat{\pi}_{k,l}$, satisfies
			\begin{equation*}
			\hat{\pi}_{k+1,l}=\gamma \hat{\pi}_{k,l},\ k\geq-l,\ l\geq 0,
			\end{equation*}
			and therefore
			\begin{equation}
			\hat{\pi}_{k,l}=\gamma^k\hat{\pi}_{0,l},\ k\geq-l,\ l\geq 0.
			\label{eq.geometric_p(m,n)_1}
			\end{equation}
			This yields
			\begin{equation*}
			\sum_{l=0}^\infty\hat{\pi}_{-l,l}=\sum_{l=0}^\infty\gamma^{-l}\hat{\pi}_{0,l}<1.
			\end{equation*}
			To determine the $\gamma$ we consider levels of the form $L=\{(k,l)\,:\ 2k+l=L\}$ and let $\hat{\pi}_L=\sum_{2k+l=L}\hat{\pi}_{k,l}$. The balance equations between the levels are given by
			\begin{align}
			\lambda(\bar{a}^2+a^2)\hat{\pi}_L=2\hat{\lambda}\bar{a}a\hat{\pi}_{L+1},\ L\geq1,\label{26}
			\end{align}
			since $p_{1,0}^V=2p_{0,1}^D=\lambda(\bar{a}^2+a^2)$ and $p_{-1,0}^V+p_{0,-1}^V=2p_{-1,0}^D=2\hat{\lambda}\bar{a}a$. Furthermore, Equation \eqref{eq.geometric_p(m,n)_1} yields
			\begin{align}
			\hat{\pi}_{L+1}=\sum_{2k+l=L+1}\gamma^k \hat{\pi}_{0,l}=\gamma\sum_{2k+l=L-1}\gamma^k \hat{\pi}_{0,l}(n)=\gamma\,  \hat{\pi}_{L-1}\label{31}.
			\end{align}
			Substituting \eqref{31} into \eqref{26} yields
			\begin{equation}\label{4.5}
			\gamma=\rho^2=\left(=\frac{\lambda(\bar{a}^{2}+a^{2})}{2\bar{\lambda}\bar{a}a}\right)^2.
			\end{equation}
			So far, we have shown that the equilibrium distribution of the modified model has a product form solution which is unique up to a positive multiplicative constant. Returning to the $\{\tilde{\bm{Q}}(n),\, n \geq 0\}$ model, we can immediately assume that the solution of the balance equations \eqref{in}-\eqref{hor3} is identical to the expression for the modified model as given in \eqref{eq_hat_p-geom}. Furthermore, the above analysis implies that this product form is unique, since the equilibrium distribution of the modified model is unique. \end{proof}

		\begin{lemma}\label{lemma1}
			\begin{enumerate}
				\item[(i)] For a fixed $\gamma$, with $|\gamma|\in(0,1)$, Equation \eqref{eq} has exactly one  $\delta$-root with $0<|\delta|<|\gamma|$.
				\item[(ii)] For a fixed $\delta$, with $|\delta|\in(0,1)$, Equation \eqref{eq} has exactly one  $\gamma$-root with $0<|\gamma|<|\delta|$. 
			\end{enumerate}
		\end{lemma}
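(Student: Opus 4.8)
The plan is to establish both statements by Rouché's theorem, exploiting the fact that Equation \eqref{eq} is, up to multiplication by $\gamma\delta$, the kernel of the interior balance equation \eqref{in}, so that the four coefficients on its right-hand side are the incoming one-step transition probabilities of an interior state of $\{\tilde{\bm{Q}}(n),\,n\ge 0\}$ and hence, together with the self-loop probability, sum to one. Concretely, set $s:=1-\bar{\lambda}(\bar{a}^{2}+a^{2})-\lambda a\bar{a}$; using $\bar{\lambda}+\lambda=1$ and $(a+\bar{a})^{2}=1$ one checks the identity $s=\bar{a}a+\bigl(\lambda(\bar{a}^{2}+a^{2})+\bar{\lambda}\bar{a}a\bigr)$, where the first summand $\bar{a}a$ is the sum of the coefficients of the two ``cubic'' monomials in \eqref{eq} (those carrying $\delta^{3}$ and $\gamma\delta^{2}$) and the second summand is the sum of the coefficients of the two ``quadratic'' monomials (those carrying $\delta^{2}$ and $\gamma^{2}$). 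Throughout I use the standing assumptions $0<a<1$ and $0<\lambda<1$, so that $\bar{a}a>0$ and $s>0$. This is the same flavour of argument used for the kernel of the classical JSQ random walk in \cite{ad0,ad1}.

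\emph{Part (i).} Fix $\gamma$ with $r:=|\gamma|\in(0,1)$ and move all terms of \eqref{eq} to one side to obtain the cubic $P(\delta)=\lambda\bar{a}a\,\delta^{3}+(\bar{\lambda}\bar{a}a\,\gamma+\lambda(\bar{a}^{2}+a^{2}))\delta^{2}-s\gamma\delta+\bar{\lambda}\bar{a}a\,\gamma^{2}$. I would apply Rouché's theorem on the circle $|\delta|=r$ with comparison function the outflow term $f(\delta)=-s\gamma\delta$, which has exactly one (simple) zero, namely $\delta=0$, inside $|\delta|<r$. On $|\delta|=r$ one has $|f(\delta)|=sr^{2}$, while, since $|\gamma|=|\delta|=r$, the triangle inequality and the grouping above give $|P(\delta)-f(\delta)|\le\bar{a}a\,r^{3}+\bigl(\lambda(\bar{a}^{2}+a^{2})+\bar{\lambda}\bar{a}a\bigr)r^{2}$; subtracting and invoking the identity for $s$ yields $|f(\delta)|-|P(\delta)-f(\delta)|\ge\bar{a}a\,r^{2}(1-r)>0$. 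Hence $P$ and $f$ have the same number of zeros in $|\delta|<r$, namely one, and no zeros on $|\delta|=r$; since $P(0)=\bar{\lambda}\bar{a}a\,\gamma^{2}\neq0$, this unique zero satisfies $0<|\delta|<|\gamma|$, which is (i).

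\emph{Part (ii).} The argument is the mirror image. Fix $\delta$ with $r:=|\delta|\in(0,1)$ and rewrite \eqref{eq} as the quadratic $Q(\gamma)=\bar{\lambda}\bar{a}a\,\gamma^{2}+(\bar{\lambda}\bar{a}a\,\delta^{2}-s\delta)\gamma+\delta^{2}\bigl(\lambda(\bar{a}^{2}+a^{2})+\lambda\bar{a}a\,\delta\bigr)$, and compare on $|\gamma|=r$ with $f(\gamma)=-s\delta\gamma$. The coefficient of $r^{3}$ in the bound on $|Q(\gamma)-f(\gamma)|$ is again $\bar{\lambda}\bar{a}a+\lambda\bar{a}a=\bar{a}a$ and the coefficient of $r^{2}$ is again $\lambda(\bar{a}^{2}+a^{2})+\bar{\lambda}\bar{a}a$, so the very same identity gives $|f|-|Q-f|\ge\bar{a}a\,r^{2}(1-r)>0$, whence $Q$ has exactly one zero in $|\gamma|<r$; and $Q(0)=\delta^{2}(\lambda(\bar{a}^{2}+a^{2})+\lambda\bar{a}a\,\delta)\neq0$ for $\delta\neq0$ because $|\lambda\bar{a}a\,\delta|<\lambda\bar{a}a\le\lambda(\bar{a}^{2}+a^{2})$ (using $\bar{a}^{2}+a^{2}\ge2\bar{a}a$), so this zero lies in $0<|\gamma|<|\delta|$. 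The only steps requiring genuine care are recognising that the correct Rouché comparison function is the outflow term $-s\gamma\delta$ rather than the leading monomial, and the bookkeeping identity $s=\bar{a}a+\lambda(\bar{a}^{2}+a^{2})+\bar{\lambda}\bar{a}a$ — this is exactly where the row-sum-one property of the transition probabilities enters, collapsing the whole estimate to the harmless inequality $r<1$; the non-vanishing of the constant terms of $P$ and $Q$ (needed to rule out $\delta=0$, resp.\ $\gamma=0$) is then a one-line estimate.
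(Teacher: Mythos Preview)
Your proof is correct and follows essentially the same approach as the paper's: both apply Rouch\'e's theorem on the circle $|\delta|=|\gamma|$ (the paper first substitutes $z=\delta/\gamma$ and works on $|z|=1$, which is the same thing) and for part~(ii) on $|\gamma|=|\delta|$ (the paper sets $\hat{z}=\gamma/\delta$). The only cosmetic difference is the splitting: the paper takes as dominant term $g(z)=sz-\bar{\lambda}\bar{a}a$ (i.e., it keeps the constant $\bar{\lambda}\bar{a}a\gamma^{2}$ on the ``large'' side), whereas you take only the outflow term $-s\gamma\delta$ and push the constant to the ``small'' side, then separately verify $P(0)\neq0$; both partitions yield the same Rouch\'e count, and your version makes the margin $\bar{a}a\,r^{2}(1-r)>0$ explicit.
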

		\begin{proof}
			\begin{enumerate}
				\item[(i)]  Divide \eqref{eq} by $\gamma^{2}$ and set $z=\delta/\gamma$. Then after some straightforward algebra, it yields
				\begin{equation*}
				f(z):=z^2\left( \bar{\lambda} \bar{a} a \gamma  + \lambda(a^2 + \bar{a}^2) + \lambda \bar{a} a \gamma z\right) =z\left(1-\left(\bar{\lambda}(\bar{a}^2+a^2) + \lambda a\bar{a}\right) \right)-\bar{\lambda} \bar{a} a:=g(z).
				\end{equation*}
				Note that for $|z|=1$,
				\begin{align*}
				|f(z)|&=|z|^{2}|\bar{\lambda} \bar{a} a \gamma  + \lambda(a^2 + \bar{a}^2) + \lambda \bar{a} a \gamma z|\leq \bar{\lambda} \bar{a} a |\gamma|  + \lambda(a^2 + \bar{a}^2) + \lambda \bar{a} a |\gamma| \\
				&< \bar{\lambda} \bar{a} a + \lambda(a^2 + \bar{a}^2) + \lambda \bar{a} a =\lambda(a^2 + \bar{a}^2)+\bar{a}a,\\
				|g(z)|&\geq \left | |z|\left|1-\left(\bar{\lambda}(\bar{a}^2+a^2) + \lambda a\bar{a}\right) \right|-\bar{\lambda} \bar{a} a\right |=1-\left(\bar{\lambda}(\bar{a}^2+a^2) + \lambda a\bar{a}\right) -\bar{\lambda} \bar{a} a.\\
				&=\lambda(a^2 + \bar{a}^2)+\bar{a}a
				\end{align*}
				Then the lemma follows by applying Rouch\'e's theorem to $f(z)$ and $g(z)$ on the unit circle.
				\item[(ii)] Multiply \eqref{eq} with $\gamma/\delta^{3}$ and set $\hat{z}=\gamma/\delta$. The statement then follows in an analogous manner as in Assertion (i).
			\end{enumerate}\vspace{-.4cm}
		\end{proof}
		
		\begin{proposition}\label{JSQR_prop4}
			The sequences $\{\gamma_i\}_{i\in\mathbb{N}}$ and $\{\delta_i\}_{i\in\mathbb{N}}$ appearing in Equations \eqref{on-final} and \eqref{on2-final} satisfy the properties
			\begin{enumerate}
				\item[(i)] $1>\rho^2=|\gamma_{0}|>|\delta_{0}|>|\gamma_{1}|>|\delta_{1}|>|\gamma_2|>\ldots$,
				\item[(ii)] $0\leq|\gamma_{i}|\leq 0.4^{i}\rho^2$ and $0\leq|\delta_{i}|\leq \frac{1}{2}0.4^{i}\rho^2$.
			\end{enumerate}
		\end{proposition}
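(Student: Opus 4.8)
The plan is to establish both assertions simultaneously by analysing the iterative structure of the compensation procedure, combined with the root-location estimates that follow from applying Rouch\'e's theorem to the kernel equation \eqref{eq}. First, recall that the compensation sequence alternates between $\gamma$-roots and $\delta$-roots of \eqref{eq}: starting from $\gamma_0=\rho^2$ and $\delta_0$ (the root with $|\delta_0|<|\gamma_0|$ guaranteed by Lemma \ref{lemma1}(i)), one constructs $\gamma_1$ as the $\gamma$-root of \eqref{eq} paired with $\delta_0$ satisfying $|\gamma_1|<|\delta_0|$ (Lemma \ref{lemma1}(ii)), then $\delta_1$ as the $\delta$-root paired with $\gamma_1$ satisfying $|\delta_1|<|\gamma_1|$, and so on. Thus, Assertion (i), namely the strict decrease $1>|\gamma_0|>|\delta_0|>|\gamma_1|>|\delta_1|>\cdots$, is essentially a direct consequence of repeatedly invoking the two parts of Lemma \ref{lemma1} along the chain of constructed tuples; one only needs to check that each constructed root is genuinely nonzero, which also follows from the Rouch\'e argument (the function $g(z)$ in the proof of Lemma \ref{lemma1} has a single simple zero strictly inside the unit disk, away from the origin).

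For Assertion (ii), the geometric contraction rate, I would refine the Rouch\'e estimate to extract a quantitative bound on $|z|=|\delta/\gamma|$ (and analogously on $|\hat z|=|\gamma/\delta|$). Concretely, in the proof of Lemma \ref{lemma1}(i), instead of testing on $|z|=1$, I would test $f$ and $g$ on a smaller circle $|z|=r$ for a suitable $r<1$ and verify that $|f(z)|<|g(z)|$ there as well; the largest such $r$ for which this holds, uniformly over all admissible $\gamma$ with $|\gamma|\le\rho^2<1$, gives the contraction factor. A short computation along these lines should show that $|\delta|\le \tfrac12|\gamma|$ whenever $|\gamma|\le\rho^2$, and symmetrically that $|\gamma_{i+1}|\le \tfrac{4}{5}|\delta_i|$ (or a comparable constant), from which the stated bounds $|\gamma_i|\le 0.4^i\rho^2$ and $|\delta_i|\le\tfrac12\cdot 0.4^i\rho^2$ follow by an immediate induction: $|\delta_i|\le\tfrac12|\gamma_i|$ and $|\gamma_{i+1}|\le 0.8|\delta_i|\le 0.4|\gamma_i|$. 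The constant $0.4$ should be read as whatever explicit bound the refined Rouch\'e estimate yields, and the factor $\tfrac12$ encodes the step from a $\gamma$-root to the paired $\delta$-root.

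The main obstacle I anticipate is making the quantitative Rouch\'e estimate \emph{uniform} in the free parameter. When bounding the $\delta$-root in terms of a fixed $\gamma$, the constant one obtains will in general depend on $\gamma$; one needs a bound that is uniform over the relevant range $0<|\gamma|\le\rho^2$, and moreover that degrades gracefully so that the composition of the two steps (the $\gamma\to\delta$ step and the $\delta\to\gamma$ step) still contracts. This requires care with the terms $\bar\lambda\bar a a\gamma$ and $\lambda\bar a a\gamma z$ appearing in $f(z)$: on the circle $|z|=r$ one has $|f(z)|\le r^2(\bar\lambda\bar a a|\gamma|+\lambda(a^2+\bar a^2)+\lambda\bar a a|\gamma|r)$, and this must be compared against $|g(z)|\ge r(1-\bar\lambda(\bar a^2+a^2)-\lambda a\bar a)-\bar\lambda\bar a a = r(\lambda(a^2+\bar a^2)+\bar a a)-\bar\lambda\bar a a$ — wait, more carefully $|g(z)|\ge |r|\,|1-(\bar\lambda(\bar a^2+a^2)+\lambda a\bar a)| - \bar\lambda\bar a a$. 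Verifying that the inequality $|f|<|g|$ survives with room to spare at $r=\tfrac12$ (or the appropriate value) for \emph{all} parameter choices in the stability region is the delicate point; one may need to treat separately the regime where $\rho$ is close to $1$. Everything else — the bookkeeping of the induction and the strict-inequality chain — is routine once this uniform contraction constant is in hand.
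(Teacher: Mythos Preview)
Your proposal is correct and follows essentially the same route as the paper: assertion (i) is obtained by iterating Lemma \ref{lemma1}, and assertion (ii) by rerunning the Rouch\'e argument on the smaller circles $|z|=\tfrac12$ and $|\hat z|=\tfrac{8}{10}$ to get $|\delta|<\tfrac12|\gamma|$ (for $|\gamma|\le\rho^2$) and $|\gamma|<\tfrac{8}{10}|\delta|$ (for $|\delta|\le\rho^2/2$), then chaining these. The uniformity issue you flag is handled exactly as you suggest: the $\gamma$- and $\delta$-dependent terms in the Rouch\'e comparison reduce to explicit inequalities of the form $\rho^2<\frac{2\lambda}{\bar a a(2-\lambda)}$ and an analogous bound for $\rho^2/2$, which hold throughout the stability region.
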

		\begin{proof}
			\begin{enumerate}
				\item[(i)] Recall that $\gamma_{0}=\rho^2<1$, then $\delta_{0}$ follows from \eqref{eq} and according to Lemma \ref{lemma1} $|\gamma_{0}|>|\delta_{0}|$. So that assertion (i) follows upon repeating this argument and in light of Lemma \ref{lemma1}. 
				\item[(ii)] We prove the statement of Assertion (ii) by showing that, 
				\begin{enumerate}
					\item  for a fixed $\gamma$,  with $|\gamma|\leq \gamma_0$, $|\delta|<\frac{1}{2}|\gamma|$, and that, 
					\item for a fixed $\delta$, with $|\delta|\leq\gamma_0/2$, $|\gamma|<\frac{8}{10}|\delta|$. 
				\end{enumerate}
				Then, by applying the above iteratively yields
				\begin{align*}
				&|\gamma_i|\leq \frac{8}{10}|\delta_{i-1}|\leq \frac{8}{10}\frac{1}{2}|\gamma_{i-1}|\leq \ldots\leq \left(\frac{8}{10}\frac{1}{2}\right)^i|\gamma_0|=0.4^i\rho^2,\\
				&|\delta_i|\leq \frac{1}{2}|\gamma_i|\leq \frac{8}{10}\frac{1}{2}|\delta_{i-1}|\leq \ldots\leq \left(\frac{8}{10}\frac{1}{2}\right)^i|\delta_0|\leq \frac{1}{2}\left(\frac{8}{10}\frac{1}{2}\right)^i|\gamma_0|=\frac{1}{2}0.4^i\rho^2.
				\end{align*}
				It remains to prove (a) and (b) stated above.
				\begin{enumerate}
					\item  For a fixed $\gamma$,  we prove that $|\delta|<|\gamma|/2$, by repeating the analysis of Lemma \ref{lemma1} for $z=\delta/\gamma$ on the domain $|z|=1/2$, i.e., we show that $|f(z)|<|g(z)|$ on $|z|=1/2$. The domain $|z|=1/2$ is determined by noting that the function $g(z):=z\left(1-\left(\bar{\lambda}(\bar{a}^2+a^2) + \lambda a\bar{a}\right) \right)-\bar{\lambda} \bar{a} a$, defined in the proof of Lemma \ref{lemma1}, has one single root in the interior of the domain $|z|=1/2$.
					
					For $|z|=\frac{1}{2}$,
					\begin{align*}
					|f(z)|&=|z|^{2}\left|\bar{\lambda}a\bar{a}\gamma+\lambda(\bar{a}^{2}+a^{2})+\lambda a\bar{a}\gamma z\right|\leq\frac{1}{4}\left(\bar{\lambda}a\bar{a}|\gamma|+\lambda(\bar{a}^{2}+a^{2})+\frac{\lambda a\bar{a}|\gamma|}{2}\right),
					\end{align*}
					and
					\begin{align*}
					|g(z)|\geq& \left | |z|\left|1-\left(\bar{\lambda}(\bar{a}^2+a^2) + \lambda a\bar{a}\right) \right|-\bar{\lambda} \bar{a} a\right |=\frac{1}{2}\left(1-\left(\bar{\lambda}(\bar{a}^2+a^2) + \lambda a\bar{a}\right)\right)-\bar{\lambda}a\bar{a}\\
					=&\frac{1}{2}\left(\lambda(\bar{a}^2+a^2) + \lambda a\bar{a}+2\bar{\lambda}a\bar{a}\right)-\bar{\lambda}a\bar{a}=\frac{1}{2}\left(\lambda(\bar{a}^2+a^2) + \lambda a\bar{a}\right).
					\end{align*}
					Note that
					\[\frac{1}{2}\left(\lambda(\bar{a}^2+a^2) + \lambda a\bar{a}\right)> \frac{1}{4}\left(\bar{\lambda}a\bar{a}|\gamma|+\lambda(\bar{a}^{2}+a^{2})+\frac{\lambda a\bar{a}|\gamma|}{2}\right)\ \Leftrightarrow\ |\gamma|< \frac{2\lambda}{\bar{a}a(2-\lambda)}\]
					and that
					\[|\gamma|\leq \gamma_0=\rho^2< \frac{2\lambda}{\bar{a}a(2-\lambda)}.\]
					This completes the proof that $|f(z)|<|g(z)|$ on $|z|=1/2$ and the application of Rouch\'e's theorem follows evidently.

					\item For a fixed $\delta$, we prove that $|\gamma|<\frac{8}{10}|\delta|$, using a similar analysis as the one above but now for $\hat{z}=\gamma/\delta$ on the domain $|\hat{z}|=8/10$. To this end, we multiply \eqref{eq} with $\gamma/\delta^{3}$ and set $\hat{z}=\gamma/\delta$, yielding
					\begin{displaymath}
					\hat{f}(\hat{z}):=\hat{z}^{2}\bar{\lambda}a\bar{a}=\hat{z}\left(1-\left(\bar{\lambda}(\bar{a}^{2}+a^{2})+\lambda a\bar{a}\right)-\bar{\lambda}a\bar{a}\delta\right)-\left(\lambda(\bar{a}^{2}+a^{2})+\lambda a\bar{a}\delta\right):=\hat{g}(\hat{z}).
					\end{displaymath}
					The domain $|\hat{z}|=8/10$ is determined by noting that the function $\hat{g}(\hat{z})$ has one single root in the interior of the domain $|\hat{z}|=8/10$, namely that
					\[\left|\frac{\lambda(\bar{a}^{2}+a^{2})+\lambda a\bar{a}\delta}{1-\left(\bar{\lambda}(\bar{a}^{2}+a^{2})+\lambda a\bar{a}\right)-\bar{\lambda}a\bar{a}\delta}\right|
					\leq
					\frac{\lambda(\bar{a}^{2}+a^{2})+\lambda a\bar{a}|\delta|}{\left|1-\left(\bar{\lambda}(\bar{a}^{2}+a^{2})+\lambda a\bar{a}\right)-\bar{\lambda}a\bar{a}|\delta|\right|}\leq \frac{8}{10}.
					\]
					For $|\hat{z}|=\frac{8}{10}$,
					\begin{align*}
					\left|\hat{f}(\hat{z})\right|&=\left(\frac{8}{10}\right)^2\bar{\lambda}a\bar{a},
					\end{align*}
					and
					\begin{align*}
					\left|\hat{g}(\hat{z})\right|\geq& \frac{8}{10}\left|
					\left(1-\left(\bar{\lambda}(\bar{a}^{2}+a^{2})+\lambda a\bar{a}\right)-\bar{\lambda}a\bar{a}|\delta|\right)-\left(\lambda(\bar{a}^{2}+a^{2})+\lambda a\bar{a}|\delta|\right)\right|.
					\end{align*}
					Note that
					\begin{align*}
					\frac{8}{10}\bar{\lambda}a\bar{a}<\left|
					\left(1-\left(\bar{\lambda}(\bar{a}^{2}+a^{2})+\lambda a\bar{a}\right)-\bar{\lambda}a\bar{a}|\delta|\right)-\left(\lambda(\bar{a}^{2}+a^{2})+\lambda a\bar{a}|\delta|\right)\right|\\
					\ \Leftrightarrow\ |\delta|<  \frac{6 a^2 \lambda +24 a^2-6 a \lambda -24 a+5 \lambda }{5 (a-1) a (\lambda +4)}
					\end{align*}
					and that
					\[|\delta|\leq \frac{\gamma_0}{2}=\frac{\rho^2}{2}< \frac{6 a^2 \lambda +24 a^2-6 a \lambda -24 a+5 \lambda }{5 (a-1) a (\lambda +4)}.\]
					This completes the proof that $|\hat{f}(\hat{z})|<|\hat{g}(\hat{z})|$ on $|\hat{z}|=8/10$ and the application of Rouch\'e's theorem follows evidently.
					
				\end{enumerate}
			\end{enumerate}\vspace{-0.8cm}
		\end{proof}

		It follows from Proposition \ref{JSQR_prop4} that the sequences of  $\{\gamma_i\}_{i\in\mathbb{N}}$ and $\{\delta_i\}_{i\in\mathbb{N}}$ tend to zero as $i\to\infty$. The limiting behaviour of  the sequences is presented in  Lemma \ref{lem?1} and the limiting behaviour of the coefficients is treated in Lemma \ref{lem?2}.

		\begin{lemma}\label{lem?1}
			\begin{itemize}
				\item[{\normalfont (i)}]
				Let $\gamma$, $\delta$ and $\hat{\gamma}$ be roots of Equation \eqref{eq} with $1>|\gamma|>|\delta|>|\hat{\gamma}|$. Then, as $\gamma\to 0$,
				${\delta}/{\gamma}\to w$, with $ |w|\in(0,1)$, and ${\hat{\gamma}}/{\delta}\to{1}/{\hat{w}}$, with $ |\hat{w}|>1$, and  $w$ and $\hat{w}$ are the two distinct roots of
				\begin{align}\label{w_i-eq}
				0&=\bar{\lambda} \bar{a} a -w\left( 1-\left(\bar{\lambda}\left(\bar{a}^2+a^2\right) + \lambda a\bar{a}\right) \right)+\lambda\left(a^2 + \bar{a}^2\right)w^2.
				\end{align}
				\item[{\normalfont (ii)}]
				Let $\delta$, $\hat{\gamma}$ and $\hat{\delta}$ be roots of Equation \eqref{eq} with $1>|\delta|>|\hat{\gamma}|>|\hat{\delta}|$. Then, as $\delta\to 0$,
				$\hat{\delta}/\hat{\gamma}\to w$, with $ |w|\in(0,1)$, and ${\hat{\gamma}}/{\delta}\to{1}/{\hat{w}}$, with $ |\hat{w}|>1$, and  $w$ and $\hat{w}$ are the two distinct roots of Equation \eqref{w_i-eq}.
			\end{itemize}
		\end{lemma}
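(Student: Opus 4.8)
The plan is to turn the cubic kernel \eqref{eq} into a small perturbation of the quadratic \eqref{w_i-eq}, and then use a Rouch\'e/continuity-of-roots argument, together with the modulus ordering in the hypothesis, to single out which limiting root is attained. Throughout, write $c_{0}:=1-\bar{\lambda}(\bar{a}^{2}+a^{2})-\lambda a\bar{a}$ for the coefficient appearing in \eqref{eq}, and record the two elementary identities $\lambda+\bar{\lambda}=1$ and $(a+\bar{a})^{2}=1$.

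For the first limit in part (i), I would substitute $\delta=z\gamma$ into \eqref{eq} and divide by $\gamma^{2}$, obtaining
\[
\lambda\bar{a}a\,\gamma\,z^{3}+\bigl(\bar{\lambda}\bar{a}a\,\gamma+\lambda(a^{2}+\bar{a}^{2})\bigr)z^{2}-c_{0}\,z+\bar{\lambda}\bar{a}a=0 .
\]
As $\gamma\to0$ the cubic coefficient vanishes and the remaining coefficients converge, so this is a regular perturbation of $g(z):=\lambda(a^{2}+\bar{a}^{2})z^{2}-c_{0}z+\bar{\lambda}\bar{a}a$, which is exactly \eqref{w_i-eq}, with one spurious root escaping to infinity. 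The key computation is the root location for $g$: using the two identities above, $g(0)=\bar{\lambda}\bar{a}a>0$ and $g(1)=(a^{2}+\bar{a}^{2}+a\bar{a})-1=-a\bar{a}<0$, and since the leading coefficient is positive, $g$ has one real root $w\in(0,1)$ and one real root $\hat{w}\in(1,\infty)$; in particular $w\ne\hat{w}$. Since $z=\delta/\gamma$ satisfies $|z|<1$ by hypothesis (cf.\ Lemma \ref{lemma1}(i)), applying Rouch\'e's theorem on $|z|=1$ to the displayed cubic against $g$ (which is zero-free on the unit circle) shows that for all sufficiently small $\gamma$ the cubic has a unique zero inside the unit disk and that zero lies near $w$; hence $\delta/\gamma\to w$ with $|w|\in(0,1)$.

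For the second limit in part (i), I would instead substitute $\gamma=\hat{z}\delta$ into \eqref{eq} and divide by $\delta^{2}$ — the substitution $\hat{z}=\gamma/\delta$ already used in the proofs of Lemma \ref{lemma1}(ii) and Proposition \ref{JSQR_prop4} — which yields the $\hat{z}$-quadratic
\[
\bar{\lambda}\bar{a}a\,\hat{z}^{2}-\bigl(c_{0}-\bar{\lambda}\bar{a}a\,\delta\bigr)\hat{z}+\lambda(a^{2}+\bar{a}^{2})+\lambda\bar{a}a\,\delta=0
\]
with coefficients depending only on $\delta$. Letting $\delta\to0$ (which is forced by $\gamma\to0$, since $|\delta|<|\gamma|$) gives $\bar{\lambda}\bar{a}a\,\hat{z}^{2}-c_{0}\hat{z}+\lambda(a^{2}+\bar{a}^{2})=0$, which is precisely the reciprocal polynomial of $g$, so its roots are $1/w$ and $1/\hat{w}$. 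By continuity the two roots of the $\delta$-perturbed quadratic converge to $1/w$ and $1/\hat{w}$; since $\hat{z}=\hat{\gamma}/\delta$ has $|\hat{z}|<1$ by hypothesis while $1/w>1$, the root it tracks is the one tending to $1/\hat{w}$, giving $\hat{\gamma}/\delta\to1/\hat{w}$ with $|\hat{w}|>1$. Part (ii) then follows by relabeling: its second limit is literally the one just proved, and its first limit is obtained by applying the first limit of part (i) to the pair $(\hat{\gamma},\hat{\delta})$, which is legitimate because $|\hat{\gamma}|<|\delta|$ and $\delta\to0$ force $\hat{\gamma}\to0$.

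I expect the main obstacle to be the perturbation step in the first limit: one must argue carefully that, among the three roots of the degenerating cubic, the designated root $\delta/\gamma$ converges, and that the constraint $|\delta/\gamma|<1$ selects the branch tending to $w$ rather than the branch tending to $\hat{w}$ or the branch running off to infinity. The clean factor $g(1)=-a\bar{a}$, which places $w$ strictly inside and $\hat{w}$ strictly outside the unit disk, is exactly what makes this dichotomy sharp; verifying that identity is routine but it is the pivot of the whole argument.
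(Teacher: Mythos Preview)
Your proposal is correct and follows essentially the same approach as the paper: substitute $z=\delta/\gamma$, divide \eqref{eq} by $\gamma^{2}$, and pass to the limit to obtain the quadratic \eqref{w_i-eq}, then locate its roots relative to the unit circle. Your argument is in fact more complete than the paper's terse version: you give the explicit sign computation $g(1)=-a\bar{a}$ (the paper just invokes Rouch\'e on \eqref{w_i-eq}), and you treat the $\hat{\gamma}/\delta$ limit separately via the reciprocal substitution $\hat{z}=\gamma/\delta$, which the paper does not spell out.
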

		
		\begin{proof}
			By multiplying \eqref{eq} with $1/\gamma^2$, we obtain after some straightforward calculations
			\begin{align*}
			\frac{\delta}{\gamma}=& \left(\bar{\lambda}(\bar{a}^2+a^2) + \lambda a\bar{a}\right) \frac{\delta}{\gamma} + \left(\bar{\lambda} \bar{a} a \gamma  + \lambda(a^2 + \bar{a}^2)\right) \left( \frac{\delta}{\gamma}\right)^2+ \lambda \bar{a} a  \delta \left( \frac{\delta}{\gamma}\right)^2 + \bar{\lambda} \bar{a} a .
			\end{align*}
			Taking now the limit as $\gamma\to0$ (which also implies that $\delta\to0$) and $\delta/\gamma\to w$ yields \eqref{w_i-eq}. By applying Rouch\'e's theorem in Equation \eqref{w_i-eq}, it makes it immediately evident that the resulting equation has one root inside and one root outside the unit circle. This completes the proof of assertion (i). The proof of assertion (ii) follows similarly. 
		\end{proof}

		\begin{lemma}\label{lem?2}
			\begin{enumerate} 
				\item[{\normalfont (i)}] Let $\gamma$, $\delta$  and $\hat{\gamma}$ be roots of Equation \eqref{eq} for fixed $\delta$, with  $1>|\gamma|>|\delta|>|\hat{\gamma}|$. Then, as $\gamma\to 0$,
				${\delta}/{\gamma}\to w$,
				\begin{align*}
				\frac{c_{i+1}}{d_i}&\to -\frac{1-\bar{\lambda}\bar{a}-\lambda a\bar{a}-\bar{\lambda}a\bar{a}/w} {1-\bar{\lambda}\bar{a}-\lambda a\bar{a}-\bar{\lambda}a\bar{a}/\hat{w}}.
				\end{align*}
				\item[{\normalfont (ii)}] Let $\delta$, $\hat{\gamma}$ and $\hat{\delta}$ be roots of Equation \eqref{eq} for fixed $\hat{\gamma}$, with  $1>|\delta|>|\hat{\gamma}|>|\hat{\delta}|$. Then, as $\delta\to 0$,
				$\hat{\delta}/\hat{\gamma}\to w$,
				\begin{align*}
				\frac{e_{0,i}}{d_i \delta_i}&\to \frac{\left(2 a^2-2 a+1\right) \lambda  w (\hat{w}-w)}{-4 a^2 \lambda +2 a^2 \lambda  \hat{w}+2 a^2+4 a \lambda -2 a \lambda  \hat{w}-2 a-\lambda +\lambda  \hat{w}},
				\\
				\frac{e_{1,i}}{d_i \delta_i}&\to \frac{\left(4 a^2 \lambda -2 a^2-4 a \lambda +2 a+\lambda \right) (\hat{w}-w)}{-4 a^2 \lambda +2 a^2 \lambda  \hat{w}+2 a^2+4 a \lambda -2 a \lambda  \hat{w}-2 a-\lambda +\lambda  \hat{w}},
				\\
				\frac{c_i}{d_i}&\to -\frac{w^2 \left(-4 a^2 \lambda +2 a^2 \lambda  w+2 a^2+4 a \lambda -2 a \lambda  w-2 a-\lambda +\lambda  w\right)}{\hat{w}^2 \left(-4 a^2 \lambda +2 a^2 \lambda  \hat{w}+2 a^2+4 a \lambda -2 a \lambda  \hat{w}-2 a-\lambda +\lambda  \hat{w}\right)}.
				\end{align*}
			\end{enumerate}
		\end{lemma}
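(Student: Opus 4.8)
The plan is to obtain all four limits by passing to the limit $i\to\infty$ in the two recursions that generate the coefficients. In its general form, Equation \eqref{JSRQ_C} expresses $c_{i+1}$ through $d_i$ and the triple $(\gamma_i,\delta_i,\gamma_{i+1})$, while the linear system \eqref{System1} expresses $(e_{0,i+1},e_{1,i+1},d_{i+1})$ through $c_{i+1}$ and the triple $(\gamma_{i+1},\delta_i,\delta_{i+1})$. By Proposition \ref{JSQR_prop4} both $\gamma_i$ and $\delta_i$ vanish, so the only additional input needed is the behaviour of the ratios of consecutive roots, and these are precisely what Lemma \ref{lem?1} supplies: for a triple $(\gamma,\delta,\hat\gamma)$ of roots of \eqref{eq} with $1>|\gamma|>|\delta|>|\hat\gamma|$ one has $\gamma/\delta\to 1/w$ and $\hat\gamma/\delta\to 1/\hat w$; composing two such relations also gives $\delta_{i-1}/\delta_i\to\hat w/w$ and $\gamma_i/\delta_{i-1}\to 1/\hat w$.

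For assertion (i) I would take the general version of \eqref{JSRQ_C}, divide numerator and denominator by $\delta_i$, and let $i\to\infty$: the terms quadratic in $\delta_i$ drop out, while $\bar{\lambda}\bar{a}a\,\gamma_i/\delta_i\to\bar{\lambda}\bar{a}a/w$ and $\bar{\lambda}\bar{a}a\,\gamma_{i+1}/\delta_i\to\bar{\lambda}\bar{a}a/\hat w$ by Lemma \ref{lem?1}(i), which yields the stated expression for $\lim c_{i+1}/d_i$. One only has to check that the limiting denominator $1-\bar{\lambda}\bar{a}-\lambda a\bar{a}-\bar{\lambda}\bar{a}a/\hat w$ does not vanish; since $1-\bar{\lambda}\bar{a}-\lambda a\bar{a}=a+\lambda\bar{a}^2$ and $|\hat w|>1$, this follows from $|\bar{\lambda}\bar{a}a/\hat w|<\bar{\lambda}\bar{a}a<a+\lambda\bar{a}^2$.

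For assertion (ii) I would rescale the three scalar equations making up \eqref{System1}. A glance at the matrices $A(\gamma,\delta)$ and $B(\gamma,\delta)$ shows that, under the ansatz $e_{j,i}\sim d_i\delta_i$ and $c_i\sim d_i$ (which the conclusion itself predicts), the first two rows of \eqref{System1} are of order $d_i\delta_i^2$ and the third of order $d_i\delta_i^3$. Dividing the first two equations by $d_i\delta_i^2$ and the third by $d_i\delta_i^3$, and then substituting $\gamma_i/\delta_i\to1/w$, $\gamma_i/\delta_{i-1}\to1/\hat w$, $\delta_{i-1}/\delta_i\to\hat w/w$ and $\delta_i\to0$ from Lemma \ref{lem?1}, every coefficient converges and one is left with a fixed $3\times3$ linear system for $\big(\lim e_{0,i}/(d_i\delta_i),\ \lim e_{1,i}/(d_i\delta_i),\ \lim c_i/d_i\big)$ whose entries are explicit rational functions of $w,\hat w,a,\lambda$. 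Solving it by Cramer's rule and then cleaning up the result by means of the Vieta relations $w+\hat w=\big(1-\bar{\lambda}(\bar{a}^2+a^2)-\lambda a\bar{a}\big)\big/\big(\lambda(a^2+\bar{a}^2)\big)$ and $w\hat w=\bar{\lambda}\bar{a}a\big/\big(\lambda(a^2+\bar{a}^2)\big)$ read off from \eqref{w_i-eq} should produce the three displayed formulas.

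The hard part will be assertion (ii). First, one must verify that the three component equations of \eqref{System1} genuinely decay at the two different rates claimed and choose the right power of $\delta_i$ for each, so that no coefficient blows up or degenerates in the limit. Second, and more delicately, one has to justify interchanging ``solve the linear system'' with ``take $i\to\infty$'', i.e.\ show that the limiting $3\times3$ matrix is non-singular; I expect this to follow from $w\neq\hat w$ (the two roots of \eqref{w_i-eq} are distinct, cf.\ Lemma \ref{lem?1}) together with $a\bar{a}\neq0$. Third, the algebraic reduction from the raw Cramer-rule output to the compact form in the statement is lengthy and uses the Vieta relations above repeatedly, for instance the identity $\bar{\lambda}\bar{a}a/\hat w^2=\big(1-\bar{\lambda}(\bar{a}^2+a^2)-\lambda a\bar{a}\big)/\hat w-\lambda(a^2+\bar{a}^2)$, which is just \eqref{w_i-eq} for $\hat w$. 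By contrast, assertion (i) is routine once the ratios of Lemma \ref{lem?1} are in hand.
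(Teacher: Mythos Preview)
Your proposal is correct and follows exactly the approach indicated in the paper: the paper's proof is the single sentence ``The proof of the two assertions follows evidently by applying the results of Lemma \ref{lem?1} to Equations \eqref{JSRQ_C} and \eqref{System1}, respectively,'' and you have simply spelled out in detail how that application is carried out (the division by $\delta_i$ in (i), the rescaling and Cramer-rule computation in (ii), and the non-degeneracy checks). There is no methodological difference to report.
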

		
		\begin{proof}
			The proof of the two assertions follows evidently by applying the results of Lemma \ref{lem?1} to Equations \eqref{JSRQ_C} and \eqref{System1}, respectively. 
		\end{proof}

		The following proposition states that the series \eqref{on} and \eqref{on2} converges absolutely.
		\begin{proposition}\label{prop_Abs_Conv}
			There exists a positive integer $N$ such that
			\begin{enumerate}
				\item[{\normalfont (i)}]  The series $\sum_{i=0}^{\infty}(d_{i}\gamma_{i}^{k}+c_{i+1}\gamma_{i+1}^k)\delta_{i}^{l}$, for $k+l>N$, converges absolutely.
				\item[{\normalfont (ii)}]  The series $\sum_{i=1}^{\infty}e_{l,i}\gamma_{i}^{k}\delta_0^l$, for $k>N$, $l=0,1$, converges absolutely.
				\item[{\normalfont (iii)}]   The series
				$\sum_{k+l>N}\sum_{i=0}^{\infty}(d_{i}\gamma_{i}^{k}+c_{i+1}\gamma_{i+1}^k)\delta_{i}^{l} +\sum_{k>N}\sum_{l=0}^1\sum_{i=1}^{\infty}e_{l,i}\gamma_{i}^{k}\delta_0^l$
				converges absolutely.
			\end{enumerate}
		\end{proposition}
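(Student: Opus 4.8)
The plan is to combine the two ingredients established just above: the geometric decay of $\{\gamma_i\}$ and $\{\delta_i\}$ with a \emph{fixed} ratio (Proposition \ref{JSQR_prop4}(ii): $|\gamma_i|\le 0.4^{\,i}\rho^2$ and $|\delta_i|\le \tfrac12 0.4^{\,i}\rho^2$), and the convergence of the coefficient ratios (Lemma \ref{lem?2}). A geometrically vanishing base $\gamma_i,\delta_i$ together with at most geometrically growing coefficients $d_i,c_i,e_{l,i}$ makes the general term of each of the three series decay geometrically in $i$, provided $k+l$ (resp.\ $k$) is large enough; absolute convergence then follows by comparison with geometric series, and summing these geometric bounds over $(k,l)$ gives part (iii).

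First I would control the growth of $\{d_i\}$. By Lemma \ref{lem?2}(ii) the ratio $c_i/d_i$ tends to a finite limit $R_2$, and by Lemma \ref{lem?2}(i) the ratio $c_{i+1}/d_i$ tends to a finite limit $R_1$; writing $d_{i+1}/d_i=(c_{i+1}/d_i)\,(d_{i+1}/c_{i+1})$ and using $c_{i+1}/d_{i+1}\to R_2$, one gets $d_{i+1}/d_i\to\kappa:=R_1/R_2$, a finite constant. Hence for every $\epsilon>0$ there is $C_\epsilon$ with $|d_i|\le C_\epsilon(|\kappa|+\epsilon)^{i}$ for all $i$. Moreover Lemma \ref{lem?2} gives a constant $C$ with $|c_i|\le C|d_i|$, $|c_{i+1}|\le C|d_i|$ and $|e_{l,i}|\le C|d_i|\,|\delta_i|$ for $l=0,1$ and all $i$. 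Fix $\epsilon>0$ and set $q:=|\kappa|+\epsilon$.

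Next I would substitute these bounds into the series. For (i), using $|\gamma_{i+1}|\le 0.4\,|\gamma_i|$ as well, the $i$-th term of $\sum_{i}(d_i\gamma_i^k+c_{i+1}\gamma_{i+1}^k)\delta_i^l$ is bounded in modulus by $C'\,\rho^{2(k+l)}2^{-l}\,(q\,0.4^{\,k+l})^{i}$; choosing $N$ with $q\,0.4^{\,N+1}<1$, this is a convergent geometric series in $i$ whenever $k+l>N$, proving (i). For (ii), with $l\in\{0,1\}$, the $i$-th term of $\sum_{i\ge1}e_{l,i}\gamma_i^k\delta_0^l$ is bounded by $C''\,\rho^{2k}|\delta_0|^l\,(q\,0.4^{\,k+1})^{i}$, again geometric in $i$ with ratio $<1$ once $k>N$ (enlarging $N$ if needed), proving (ii). For (iii), summing the estimate from (i) over $i$ gives, for $k+l>N$, $\sum_i|(d_i\gamma_i^k+c_{i+1}\gamma_{i+1}^k)\delta_i^l|\le \tfrac{C'}{1-q\,0.4^{\,N+1}}\,\rho^{2(k+l)}2^{-l}$, and $\sum_{k,l\ge0}\rho^{2(k+l)}2^{-l}=\bigl(\sum_{k\ge0}\rho^{2k}\bigr)\bigl(\sum_{l\ge0}(\rho^2/2)^{l}\bigr)<\infty$ since $\rho<1$; an entirely analogous computation handles the $e_{l,i}$-part (using $\sum_{k>N}\rho^{2k}<\infty$ and $l\in\{0,1\}$). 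Adding the two finite sums yields (iii).

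The main obstacle is the first step: showing that $d_i$ (hence $c_i,e_{l,i}$) grows at most geometrically, with a \emph{finite} rate $|\kappa|$, so that a threshold $N$ with $|\kappa|\,0.4^{\,N+1}<1$ exists. This is exactly what Lemma \ref{lem?2} delivers, the limiting ratios being explicit rational functions of the two roots $w,\hat w$ of \eqref{w_i-eq}; along the way one must verify that the relevant denominators (notably $R_2$ and the expression $-4a^{2}\lambda+2a^{2}\lambda\hat w+2a^{2}+4a\lambda-2a\lambda\hat w-2a-\lambda+\lambda\hat w$ appearing in Lemma \ref{lem?2}) do not vanish for $\rho<1$, so that the ratios are genuinely bounded and $\kappa$ is well defined. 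Once this is in place, the remaining steps are the routine geometric-series comparisons sketched above.
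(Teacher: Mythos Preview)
Your argument is correct and follows essentially the same strategy as the paper's: control the coefficient ratios via Lemma~\ref{lem?2} and combine with the geometric decay of $\gamma_i,\delta_i$ to bound the general term by a geometric series in $i$ once $k+l$ is large. The only cosmetic difference is that the paper applies the ratio test directly to the full term $d_i\gamma_i^k\delta_i^l$, using Lemma~\ref{lem?1} (which gives $\gamma_{i+1}/\gamma_i,\ \delta_{i+1}/\delta_i\to w/\hat w$) together with Lemma~\ref{lem?2} so that the limiting ratio is an explicit constant times $|w/\hat w|^{k+l}$, rather than separately bounding $|d_i|\le C_\epsilon q^i$ and then invoking the cruder $0.4$-bound of Proposition~\ref{JSQR_prop4}(ii); for parts (ii) and (iii) the paper simply refers to \cite{ad0}, whereas you spell them out.
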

		
		\begin{proof}
			It suffices to show that the series $\sum_{i=0}^{\infty}d_{i}\gamma_{i}^{k}\delta_{i}^{l}$ and $\sum_{i=0}^{\infty}c_{i+1}\gamma_{i+1}^k\delta_{i}^{l}$ converge absolutely. The rest follows along the same lines as in \cite{ad0}. Note that, from Lemmas \ref{lem?1} and \ref{lem?2}, 
			\begin{align}\label{JSRQ_RHS_AbsConv}
			\left|\frac{d_{i+1}\gamma_{i+1}^{k}\delta_{i+1}^{l}}{d_{i}\gamma_{i}^{k}\delta_{i}^{l}}\right|&\to
			\frac{\left|\frac{1-\bar{\lambda}\bar{a}-\lambda a\bar{a}-\bar{\lambda}a\bar{a}/\hat{w}}{1-\bar{\lambda}\bar{a}-\lambda a\bar{a}-\bar{\lambda}a\bar{a}/w}\right|
			}{
				\left|\frac{\hat{w}^2 \left(-4 a^2 \lambda +2 a^2 \lambda  \hat{w}+2 a^2+4 a \lambda -2 a \lambda  \hat{w}-2 a-\lambda +\lambda  \hat{w}\right)}{w^2 \left(-4 a^2 \lambda +2 a^2 \lambda  w+2 a^2+4 a \lambda -2 a \lambda  w-2 a-\lambda +\lambda  w\right)}\right|
			}\left|\frac{w}{\hat{w}}\right|^{k+l}.
			\end{align}
			As $|w|,1/|\hat{w}|<1$, then there exists a positive constant $N_1$, such that for $k+l>N_1$ the right hand side of Equation \eqref{JSRQ_RHS_AbsConv} is bounded by some positive constant smaller than one. Thus, the series $\sum_{i=0}^{\infty}d_{i}\gamma_{i}^{k}\delta_{i}^{l}$ is bounded by a geometric series for $k+l>N_1$, and as such converges absolutely. 
			This concludes the proof of the absolute convergence of the series $\sum_{i=0}^{\infty}d_{i}\gamma_{i}^{k}\delta_{i}^{l}$.
			Similarly, one can show that the series  $\sum_{i=0}^{\infty}c_{i+1}\gamma_{i+1}^k\delta_{i}^{l}$ converges absolutely for $k+l>N_2$, with $N_2$ some positive constant not necessarily equal to $N_1$. 
		\end{proof}

		\section{Power series algorithm implementation}
		\label{ApendixPSA}
		In this section, we explain the power series implementation for $a = 1/2$ and $\lambda = \frac{\rho}{1 + \rho}$. Now substituting $a$ and $\lambda$ into \eqref{RB1}-\eqref{RB3}-\eqref{hor1}, we express the balance equations in term of the parameter $\rho$
		%
		
		\begin{align}
		\pi_{0, 1} &= \rho \pi_{0, 0}, \label{SB1} \\
		\pi_{0, 2} &=    (1 + \rho) \pi_{0, 1} - \pi_{1, 0}  - \rho \pi_{0, 0}, \label{SB2}\\
		2 \pi_{0, l+1}&=   (2+ 3 \rho) \pi_{0, l} - \pi_{1, l - 1} - \rho \pi_{1, 0} \mathds{1}_{(l = 2)}, \ l \geq 2, \label{SB4} \\
		(2 + 3 \rho)  \pi_{k, 0}  &=   \pi_{k, 1}   + 2 \rho \pi_{k - 1, 1} + \rho \pi_{k - 1, 2}, \ k \geq 1, l = 0, \label{SB5}\\
		2(1 + \rho)  \pi_{k, 1} &=  \pi_{k, 2}  +  2 \rho \pi_{k - 1, 2} + \rho \pi_{k - 1, 3} + 2 \rho \pi_{k, 0} + 2 \pi_{k + 1, 0},  \ k \geq 1, l = 1, \label{SB6} \\
		(2 + 3 \rho)  \pi_{k, l}  &=  \pi_{k, l+1} + 2 \rho \pi_{k - 1, l+1} + \rho \pi_{k - 1, l+2} \nonumber\\
		& + \pi_{k+1, l-1} + \rho \pi_{k + 1, 0} \mathds{1}_{(l = 2)},  \ k \geq 1, l \geq 2. \label{SB8} 
		\end{align}	
		
		Substituting $ \pi_{k, l} $ (defined in \eqref{PGF}) into \eqref{SB1}-\eqref{SB8} and equating the powers of $\rho$  leads to the following recursion equations for the coefficients, for $n = 0, 1, \dots$,

		\begin{align}
		\beta(n-1, 0, 1) &= \beta(n-1, 0, 0), \  n \geq 1, ~ l, k =0, ~\label{CB1} \\
		\beta(n-2, 0, 2)  &= \beta(n-1, 0, 1) + \beta(n-2, 0, 1) \nonumber \\ 
		&\ -  \beta(n-1, 1, 0)  -\beta(n-1, 0, 0), \ n \geq 2, l = 1, k = 0,~\label{CB2} \\
		\beta(n-l-1, 0, l+1)  &=   \beta(n-l, 0, l) + \frac{3}{2} \beta(n-l-1, 0, l)  - \frac{1}{2} \beta(n-l, 1, l-1) \nonumber \\ 
		&\ - \frac{1}{2} \beta(n-l, 1, 0)  \mathds{1}_{\{l =2\}}, \   n \geq l+1, l \geq 2, k =0,  \label{CB3} \\
		\beta(n - k, k, 0)  &=   \left[ - \frac{3}{2} \beta(n - k-1, k, 0)  + \frac{1}{2} \beta(n - k - 1, k, 1) \right]\mathds{1}_{\{n > k\}} \\ 
		&\ + \left[ \beta(n -k-1, k-1, 1) \right]\mathds{1}_{\{n > k\}}  \nonumber \\ 
		&\ +   \frac{1}{2} \beta(n - k-2, k - 1, 2) \mathds{1}_{\{n > k + 1\}}, ~ n \geq k,~ k\geq 1, \label{CB4} \\
		\beta(n-k-1, k, 1)  &=  \left[ - \beta(n - k-2, k, 1) + \frac{1}{2} \beta(n - k-2, k, 2) \right] \mathds{1}_{\{n > k+1\}} \nonumber \\ 
		&\ +    \left[ \beta(n -k-2, k-1, 2) \right] \mathds{1}_{\{n > k+1\}} \nonumber \\ 
		&\ +   \frac{1}{2} \beta(n - k-3, k - 1, 3) \mathds{1}_{\{n > k+2\}} +  \beta(n-k-1, k, 0) \nonumber \\ 
		&\ +   \beta(n-k-1, k + 1, 0) ,  \  n \geq k+1, k \geq 1, \label{CB5} \\
		\beta(n-k-l, k, l)  &=   \left[- \frac{3}{2} \beta(n - k-l-1, k, l)  + \frac{1}{2} \beta(n - k-l-1, k, l+1) \right]\mathds{1}_{\{n > k+l\}} \nonumber \\ 
		&\ +   \beta(n - k-l-1, k-1, l+1) \mathds{1}_{\{n > k+l\}} + \frac{1}{2} \beta(n-k-l, k + 1, l - 1) \nonumber \\ 
		&\ + \frac{1}{2} \beta(n - k-l-2, k - 1, l+2)\mathds{1}_{\{n > k+l+1\}} \nonumber \\ 
		&\ + \frac{1}{2} \beta(n-k-l, k + 1, 0)  \mathds{1}_{\{l = 2\}}, \ n \geq k + l,  k \geq 1, l \geq 2. \label{CB6} 
		\end{align}
		
		Furthermore, we use the normalisation equation to derive one more equation required for the determination of $\beta(n, 0, 0)$, $n = 0, 1,\ldots$. To this purpose, we use the law of total probability, see \cite[p.~161]{blanc1992}.
Substituting \eqref{PGF} and equating the corresponding powers of $\rho$ yields
		\begin{equation}
		\beta(0, 0, 0) = 1, \, \, \, \beta(n, 0, 0) = -\mathop{\sum\sum}\limits_{0 < k + l \leq n} \beta(n - k - l, k, l), \ n=1,2,\dots. \label{CB7} 
		\end{equation}
		The coefficients $\beta(n, k, l)$ can be recursively calculated from \eqref{CB1}-\eqref{CB7}. Thus, using \eqref{PGF}, one can determine the equilibrium distribution $\pi_{k, l}$.

		Similarly, for \eqref{PGF_new1}	, we repeat the above computations so as to determine the coefficient $u(n, k, l), \ n \geq 0,\ (k,l)\in\hat{S}$, 
		\begin{align}
		u(n-1, 0, 1) &= \frac{G}{G+1}u(n-2, 0, 1) \mathds{1}_{\{n \geq 2\}} +  \frac{1}{G+1} u(n-1, 0, 0),  \ n \geq 1,  ~\label{CBN1} \\
		u(n-2, 0, 2)  &= \frac{G}{G+1}u(n-3, 0, 2) \mathds{1}_{\{n \geq 3\}} -  \frac{G-1}{G+1} u(n-2, 0, 1) \nonumber \\ 
		&\ +   u(n-1, 0, 1)  - u(n-1, 1, 0) + \frac{G}{G+1} u(n-2, 1, 0)   \nonumber \\  
		&\ - \frac{1}{G+1}  u(n-1, 0, 0), \ n \geq 2, \label{CBN2} \\
		u(n-l-1, 0, l+1)  &=  \frac{G}{G+1} u(n-l-2, 0, l+1) \mathds{1}_{\{n \geq l+ 2\}} - \frac{G - \frac{3}{2}}{G+1}  u(n-l-1, 0, l) \\ 
		&\ + u(n-l, 0, l)  + \f{1}{2} \frac{G}{G+1}   u(n-l-1, 1, l-1)  - \f{1}{2} u(n-l, 1, l-1) \nonumber \\ 
		&\ -  \frac{1}{2(G+1)}  u(n-l, 1, 0) \mathds{1}_{\{l = 2\}},    \ n \geq l-1, l \geq 2,\label{CBN3} \\
		u(n-k, k, 0)  &= \left(  \frac{G - \frac{3}{2}}{G+1} u(n-k-1, k, 0)  + \frac{1}{2}  u(n-k-1, k, 1)  \right) \mathds{1}_{\{n \geq k+1\}} \nonumber \\ 
		&\ - \frac{1}{2}  \frac{1}{G+1} \left( G  u(n-k-2, k, 1)  -   u(n-k-2, k-1, 2) \right) \mathds{1}_{\{n \geq k+2\}}  \nonumber \\ 
		&\ + \frac{1}{G+1}  u(n-k-1, k-1, 1)   \mathds{1}_{\{n \geq k+1\}}, \ n \geq k, k \geq 1,\label{CBN4} \\
		u(n-k-1, k, 1)  &= \left(  \frac{G - 1}{G+1}   u(n-k-2, k, 1)  + \frac{1}{2} u(n-k-2, k, 2)  \right) \mathds{1}_{\{n \geq k+2\}} \nonumber \\ 
		&\ + \frac{1}{2}  \frac{1}{G+1} \left( - G  u(n-k-3, k, 2) +   u(n-k-3, k-1, 3) \right) \mathds{1}_{\{n \geq k+3\}}  \nonumber \\ 
		&\ + \frac{1}{G+1} \left[   u(n-k-2, k-1, 2) -   G u(n-k-2, k+1, 0) \right] \mathds{1}_{\{n \geq k+2\}}  \nonumber \\ 
		&\ +\frac{1}{G+1}  u(n-k-1, k, 0)  \nonumber \\ 
		&\ +  u(n-k-1, k+1, 0) , \ n \geq k+1, k \geq 1,\label{CBN5} \\
		u(n-k-l, k, l)  &= \left(  \frac{G - \frac{3}{2}}{G+1}  u(n-k-l-1, k, l)  + \frac{1}{2} u(n-k-l-1, k, l+1)  \right) \mathds{1}_{\{n \geq k+l+1\}} \nonumber \\ 
		&\ - \frac{1}{2}  \frac{G}{G+1}   u(n-k-l-2, k, l+1)    \mathds{1}_{\{n \geq k+l+2\}} \nonumber \\ 
		&\ + \frac{1}{2}  \frac{1}{G+1} \left[  u(n-k-l-2, k-1, l+2) \right] \mathds{1}_{\{n \geq k+l+2\}}  \nonumber \\ 
		&\ + \frac{1}{G+1}    u(n-k-l-1, k-1, l+1)   \mathds{1}_{\{n \geq k+l+1\}} \nonumber \\ 
		&\ - \f{1}{2} \frac{G}{G+1}  u(n-k-l-1, k+1, l-1)  \mathds{1}_{\{n \geq k+l+1\}}  \nonumber \\ 
		&\ - \f{1}{2}  u(n-k-l, k+1, l-1) \nonumber \\
		& \ + \frac{1}{2} \frac{1}{G+1} u(n-k-2, k+1, 0) \mathds{1}_{\{l = 2\}}, \  n \geq k+l, k \geq 1, l \geq 2.\label{CBN6}
		\end{align}
		Plus, 	
		\begin{equation}
		u(0, 0, 0) = 1, \, \, \, u(n, 0, 0) = -\mathop{\sum\sum}\limits_{0 < k + l \leq n} u(n - k - l, k, l). \label{CBN7} 
		\end{equation}
		The coefficients $u(n, k, l)$ can be recursively calculated from \eqref{CBN1}-\eqref{CBN7}. Thus, using \eqref{PGF_new1}, one can determine the equilibrium distribution $\pi_{k, l}$.

		\section{Generating function approach}	
		\begin{lemma}\label{lemgf1}
			The equation \begin{equation}
			\begin{array}{l}
			T(r_{1},r_{2})=0\Leftrightarrow r_{1}^{2}[\lambda(\bar{a}^{2}+a^{2})+\frac{\lambda\bar{a}a}{r_{2}}]-r_{1}[\lambda(\bar{a}^{2}+a^{2})+a\bar{a}+\bar{\lambda}\bar{a}a(1-\frac{1}{r_{2}})]+\bar{\lambda}a\bar{a}=0.
			\end{array}
			\label{mk}
			\end{equation} has two roots, say $\delta_{0}(r_{2})$, $\delta_{1}(r_{2})$, such that for $|r_{2}|=1$, $r_{2}\neq1$, $|\delta_{0}(r_{2})|<1<|\delta_{1}(r_{2})|$. For $r_{2}=1$, $\delta_{0}(1)=min[1,\frac{\bar{\lambda}a\bar{a}}{\lambda(\bar{a}^{2}+a^{2})+\lambda\bar{a}}]$, and $\delta_{1}(1)=max[1,\frac{\bar{\lambda}a\bar{a}}{\lambda(\bar{a}^{2}+a^{2})+\lambda\bar{a}a}]$.
		\end{lemma}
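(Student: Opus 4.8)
The plan is to regard \eqref{mk} as a quadratic equation in the free variable $r_1$ whose coefficients depend on the parameter $r_2$, and to locate its two roots relative to the unit circle by a Rouch\'e argument. Write \eqref{mk} as $A(r_2)\,r_1^2 - B(r_2)\,r_1 + C = 0$ with $A(r_2) = \lambda(\bar a^2+a^2) + \lambda\bar a a/r_2$, $B(r_2) = \lambda(\bar a^2+a^2) + a\bar a + \bar\lambda\bar a a(1-1/r_2)$ and $C = \bar\lambda\bar a a$. First I would record two elementary facts: $C>0$ is a constant, and for $|r_2|=1$ one has $|A(r_2)| \ge \lambda(\bar a^2+a^2) - \lambda\bar a a = \lambda\big((\bar a-a)^2+\bar a a\big) > 0$, so \eqref{mk} genuinely has degree two and possesses exactly two finite roots, which we call $\delta_0(r_2)$ and $\delta_1(r_2)$, for every $r_2$ on the unit circle.

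For $|r_2|=1$ with $r_2\neq 1$ I would apply Rouch\'e's theorem on the circle $|r_1|=1$ to the splitting $A(r_2)r_1^2 - B(r_2)r_1 + C = f(r_1)+g(r_1)$, where $f(r_1) = -B(r_2)r_1$ and $g(r_1) = A(r_2)r_1^2 + C$. On $|r_1|=1$ we have $|g(r_1)| \le |A(r_2)| + |C|$, so it suffices to prove the strict inequality
\begin{equation*}
|B(r_2)| > |A(r_2)| + |C|, \qquad |r_2| = 1,\ r_2 \neq 1 .
\end{equation*}
Granting this, $|f| > |g|$ on $|r_1|=1$; since $f$ has exactly one zero in $|r_1|<1$ (namely the simple zero $r_1=0$, as $B(r_2)\neq 0$), Rouch\'e's theorem shows that $f+g$, i.e.\ the left-hand side of \eqref{mk}, has exactly one zero in $|r_1|<1$ and none on $|r_1|=1$; being of degree two, it then has exactly one zero in $|r_1|>1$. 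Labelling the interior root $\delta_0(r_2)$ and the exterior one $\delta_1(r_2)$ yields $|\delta_0(r_2)| < 1 < |\delta_1(r_2)|$, and both depend continuously on $r_2$ along the punctured unit circle.

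The inequality above is the technical heart of the proof, and I would verify it by squaring. Put $\alpha = \lambda(\bar a^2+a^2)$, $\beta = \lambda\bar a a$ and $\gamma = \bar\lambda\bar a a$, so that $\beta+\gamma = a\bar a$, and write $r_2 = e^{i\theta}$. Then $|A(r_2)|^2 = \alpha^2+\beta^2+2\alpha\beta\cos\theta$, and since $B(r_2) = (\alpha+\beta+2\gamma) - \gamma e^{-i\theta}$ one gets $|B(r_2)|^2 = (\alpha+\beta+2\gamma)^2 - 2(\alpha+\beta+2\gamma)\gamma\cos\theta + \gamma^2$. Expanding $|B(r_2)|^2 - (|A(r_2)|+\gamma)^2$ and using the elementary bound $|A(r_2)| \le \alpha+\beta$ (valid on $|r_2|=1$) to estimate the cross term $-2\gamma|A(r_2)|$, the expression collapses to the lower bound $2(1-\cos\theta)\big(\alpha\beta + \alpha\gamma + \beta\gamma + 2\gamma^2\big)$, which is $\ge 0$ since $\alpha,\beta,\gamma>0$ and vanishes exactly when $\cos\theta = 1$, i.e.\ $r_2 = 1$. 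This is the quantitative form of the expected fact that the kernel $T$ does not vanish on the unit bicircle except at $(1,1)$, cf.\ \cite{fayolle}; I would double-check this algebra with care, since it is the only place where the computation is at all delicate.

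Finally, for $r_2 = 1$, direct substitution into \eqref{mk} shows that $r_1 = 1$ is a root, because the coefficients then sum to $\beta - a\bar a + \gamma = 0$; by Vieta's formula the second root equals $C/A(1) = \bar\lambda\bar a a\big/\big(\lambda(\bar a^2+a^2)+\lambda\bar a a\big)$. Ordering the two numbers $1$ and $\bar\lambda\bar a a/\big(\lambda(\bar a^2+a^2)+\lambda\bar a a\big)$ by modulus --- which can come out either way depending on $(\lambda,a)$ --- gives the stated $\min$ and $\max$ expressions for $\delta_0(1)$ and $\delta_1(1)$, with the labelling chosen so as to match the punctured-circle case by continuity. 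The main obstacle is thus the verification of the strict inequality $|B(r_2)| > |A(r_2)| + |C|$ for $r_2 \neq 1$ on the unit circle; the remaining steps are routine.
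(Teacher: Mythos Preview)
Your proof is correct and follows essentially the same strategy as the paper: apply Rouch\'e's theorem on $|r_1|=1$ with the same splitting into the linear part $-B(r_2)r_1$ and the remainder $A(r_2)r_1^2+C$. The only difference is in how the comparison is carried out: the paper rewrites $B(r_2)=A(r_2)+C+a\bar a\,(1-1/r_2)$ and argues via $\mathcal{R}\mathit{e}(1-1/r_2)\ge 0$ on $|r_2|=1$, whereas you establish the (slightly stronger) inequality $|B(r_2)|>|A(r_2)|+|C|$ directly by an explicit trigonometric expansion, which has the advantage of making the strictness for $r_2\neq 1$ completely transparent.
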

		\begin{proof}
			Using Rouch\'e's theorem we can show that \eqref{mk} has a unique zero, say $\delta_{0}(r_{2})$ in $|r_{1}|\leq 1$ if $\mathcal{R}\mathit{e}(1-\frac{1}{r_{2}})\geq 0$. Indeed, for $|r_{1}|=1$, let $b(r_{1})=-r_{1}[\lambda(\bar{a}^{2}+a^{2})+\bar{\lambda}a\bar{a}+\lambda\bar{a}a+\bar{\lambda}\bar{a}a(1-\frac{1}{r_{2}})]$ and $a(r_{1})=r_{1}^{2}[\lambda(\bar{a}^{2}+a^{2})+\frac{\lambda\bar{a}a}{r_{2}}]+\bar{\lambda}a\bar{a}$. Then,
			\begin{displaymath}
			\begin{array}{rl}
			|b(r_{1})|=&|\lambda(\bar{a}^{2}+a^{2})+\bar{\lambda}a\bar{a}+\lambda\bar{a}a+\bar{\lambda}\bar{a}a(1-\frac{1}{r_{2}})|\\
			=&|\lambda(\bar{a}^{2}+a^{2})+\bar{\lambda}a\bar{a}+\frac{\lambda\bar{a}a}{r_{2}}+\bar{a}a(1-\frac{1}{r_{2}})|\\
			\geq &|\lambda(\bar{a}^{2}+a^{2})+\bar{\lambda}a\bar{a}+\frac{\lambda\bar{a}a}{r_{2}}|
			\geq|r_{1}^{2}(\lambda(\bar{a}^{2}+a^{2})+\frac{\lambda\bar{a}a}{r_{2}})+\bar{\lambda}a\bar{a}|=|a(r_{1})|,
			\end{array}
			\end{displaymath}
			where the first inequality stands for $\mathcal{R}\mathit{e}(1-\frac{1}{r_{2}})\geq 0$. Thus, $\delta_{0}(r_{2})$ is the unique zero of $T(r_{1},r_{2})=0$ in $|r_{1}|\leq1$ if $\mathcal{R}\mathit{e}(1-\frac{1}{r_{2}})>0$, and $|\delta_{0}(r_{2})|<1$, for $\mathcal{R}\mathit{e}(|1-\frac{1}{r_{2}}|)\geq 0$, $r_{2}\neq 1$. The other zero, say $\delta_{1}(r_{2})$ lies outside the unit disk, i.e., for $|r_{2}|=1$, $|\delta_{0}(r_{2})|<1<|\delta_{1}(r_{2})|$. For $r_{2}=1$, $T(r_{1},1)=0$, implies $(1-r_{1})[\lambda(\bar{a}^{2}+a^{2})+\lambda\bar{a}a-\frac{\bar{\lambda}a\bar{a}}{r_{1}}]=0$. Therefore, for $r_{2}=1$, $\delta_{0}(1)=min[1,\frac{\bar{\lambda}a\bar{a}}{\lambda(\bar{a}^{2}+a^{2})+\lambda\bar{a}a}]$, and $\delta_{1}(1)=max[1,\frac{\bar{\lambda}a\bar{a}}{\lambda(\bar{a}^{2}+a^{2})+\lambda\bar{a}a}]$.
			
			Similar results hold for $T(r_{2},r_{1})=0$. In particular, $T(r_{2},r_{1})=0$, has a unique zero, say $\delta_{0}(r_{1})$ in $|r_{2}|\leq 1$ for $\mathcal{R}\mathit{e}(1-\frac{1}{r_{1}})>0$, and for $\mathcal{R}\mathit{e}(|1-\frac{1}{r_{1}}|)\geq 0$, $r_{1}\neq 1$. The other zero, say $\delta_{1}(r_{1})$, is such that $|\delta_{1}(r_{1})|>1$. 
		\end{proof}
		\begin{lemma}\label{lemgf2}
			For $r_{2}\in[r_{2}^{(0)},r_{2}^{(1)}]$, the two-valued function $\delta_{1}(r_{2})$ lies on a closed contour
			$\mathcal{M}$, which is symmetric with respect to the
			real line and defined by
			\begin{displaymath}
			\begin{array}{c}
			|\delta_{1}|^{2}=m_{1}(\mathcal{R}\mathit{e}(\delta_{1})),\,\,m_{1}(\zeta)=\frac{\bar{\lambda}a\bar{a}}{\lambda(\bar{a}^{2}+a^{2})+\frac{\lambda\bar{a}a}{l(\zeta)}},\,\,|\delta_{1}|^{2}\leq\frac{\bar{\lambda}a\bar{a}}{\lambda(\bar{a}^{2}+a^{2})+\frac{\lambda\bar{a}a}{r_{2}^{(1)}}},
			\end{array}
			\end{displaymath}
			where
			\begin{displaymath}
			l(\zeta)=\frac{\bar{\lambda}\bar{a}a+2\lambda\bar{a}a\zeta}{\lambda(1-a\bar{a})+2\bar{\lambda}a\bar{a}-2\lambda(\bar{a}^{2}+a^{2})\zeta}.
			\end{displaymath}
			Set $\beta_{0}:=\sqrt{\frac{\bar{\lambda}a\bar{a}}{\lambda(\bar{a}^{2}+a^{2})+\frac{\lambda\bar{a}a}{r_{2}^{(1)}}}}$ the extreme right point of $\mathcal{M}$. Exactly the same result holds for $r_{1}\in[r_{1}^{(0)},r_{1}^{(1)}]$, and $\delta_{0}(r_{1})$ lies on a closed contour $\mathcal{M}$.
		\end{lemma}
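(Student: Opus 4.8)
The plan is to reduce the whole statement to Vieta's formulas for the quadratic \eqref{mk} in $r_{1}$. First I would clear denominators and write \eqref{mk} as
\[
(Pr_{2}+Q)\,r_{1}^{2}-\big((P+Q+2R)r_{2}-R\big)\,r_{1}+Rr_{2}=0,
\]
with the shorthand $P=\lambda(\bar{a}^{2}+a^{2})$, $Q=\lambda\bar{a}a$, $R=\bar{\lambda}\bar{a}a$, using $a\bar{a}=Q+R$ and $\bar{a}^{2}+a^{2}+\bar{a}a=1-\bar{a}a$ to simplify the middle coefficient. Vieta then gives the product $\delta_{0}(r_{2})\delta_{1}(r_{2})=\tfrac{Rr_{2}}{Pr_{2}+Q}$ and the sum $\delta_{0}(r_{2})+\delta_{1}(r_{2})=\tfrac{(P+Q+2R)r_{2}-R}{Pr_{2}+Q}$, valid whenever $Pr_{2}+Q\neq0$.

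Next I would restrict to the slit $\mathcal{G}_{1}=[r_{2}^{(0)},r_{2}^{(1)}]$. Since $r_{2}^{(0)}<r_{2}^{(1)}$ are the real zeros of the discriminant of \eqref{mk} (real and inside the unit disk by \cite[Lemma~2.3.8]{fayolle}), the discriminant is nonpositive on $\mathcal{G}_{1}$, and since the coefficients of \eqref{mk} are real for real $r_{2}$ the two roots are complex conjugate there, i.e.\ $\overline{\delta_{1}(r_{2})}=\delta_{0}(r_{2})$ on $\mathcal{G}_{1}$. Hence $|\delta_{1}(r_{2})|^{2}=\delta_{0}(r_{2})\delta_{1}(r_{2})$ equals the product above and $2\,\mathcal{R}\mathit{e}(\delta_{1}(r_{2}))=\delta_{0}(r_{2})+\delta_{1}(r_{2})$ equals the sum. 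Writing $\zeta:=\mathcal{R}\mathit{e}(\delta_{1}(r_{2}))$ and solving the sum identity for $r_{2}$ is a one-line rearrangement: $2\zeta(Pr_{2}+Q)=(P+Q+2R)r_{2}-R$ gives $r_{2}=\tfrac{2Q\zeta+R}{P+Q+2R-2P\zeta}$, which is exactly $l(\zeta)$ after substituting $P+Q=\lambda(1-a\bar{a})$ and $2R=2\bar{\lambda}a\bar{a}$. Plugging $r_{2}=l(\zeta)$ into the product identity yields $|\delta_{1}|^{2}=\tfrac{R}{P+Q/l(\zeta)}=m_{1}(\zeta)$, which is the defining relation of $\mathcal{M}$.

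Then I would settle the geometric assertions. Conjugate symmetry on $\mathcal{G}_{1}$ makes the image $\{\delta_{0}(r_{2}),\delta_{1}(r_{2}):r_{2}\in\mathcal{G}_{1}\}$ invariant under complex conjugation, hence symmetric with respect to the real line. At the endpoints $r_{2}=r_{2}^{(0)},r_{2}^{(1)}$ the discriminant vanishes, so there $\delta_{0}=\delta_{1}$ is real; thus as $r_{2}$ traverses $\mathcal{G}_{1}$ the branch $\delta_{1}$ traces an arc in the closed upper half-plane joining two real points, $\delta_{0}$ traces the mirror image in the lower half-plane, and the two arcs glue together into the closed contour $\mathcal{M}$. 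For the size bound, $r_{2}\mapsto\tfrac{Rr_{2}}{Pr_{2}+Q}=\tfrac{R}{P+Q/r_{2}}$ is strictly increasing on $(0,\infty)$ since $P,Q,R>0$, so its maximum over $[r_{2}^{(0)},r_{2}^{(1)}]$ is attained at $r_{2}^{(1)}$, giving $|\delta_{1}|^{2}\le\tfrac{\bar{\lambda}a\bar{a}}{\lambda(\bar{a}^{2}+a^{2})+\lambda\bar{a}a/r_{2}^{(1)}}$; at $r_{2}^{(1)}$ the roots coincide, so $\delta_{1}=\delta_{0}=\beta_{0}:=\sqrt{\bar{\lambda}a\bar{a}/(\lambda(\bar{a}^{2}+a^{2})+\lambda\bar{a}a/r_{2}^{(1)})}$ is real and positive, i.e.\ the extreme right point of $\mathcal{M}$. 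Finally, the statement for $\delta_{0}(r_{1})$, $r_{1}\in[r_{1}^{(0)},r_{1}^{(1)}]$, follows from the symmetry $T(r_{1},r_{2})\leftrightarrow T(r_{2},r_{1})$ and the already recorded fact $\mathcal{G}_{1}\equiv\mathcal{G}_{2}$, so the same computation applies verbatim.

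The main obstacle I anticipate is not the algebra — Vieta plus one substitution does it — but the topological bookkeeping: verifying that the two conjugate arcs glue into a genuine simple closed curve (no self-intersection or pinching), that the parametrisation $\zeta=\mathcal{R}\mathit{e}(\delta_{1}(r_{2}))$ is monotone along each arc (this reduces to monotonicity of the fractional-linear map $r_{2}\mapsto\tfrac{1}{2}\tfrac{(P+Q+2R)r_{2}-R}{Pr_{2}+Q}$) so that $\mathcal{M}$ is genuinely the graph $|\delta_{1}|^{2}=m_{1}(\mathcal{R}\mathit{e}\,\delta_{1})$, and that $l(\zeta)$ stays in $[r_{2}^{(0)},r_{2}^{(1)}]$ (in particular positive) on the relevant range of $\zeta$ so that $m_{1}$ is well defined there. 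These points rest on the reality and location of the branch points, for which I would invoke \cite[Lemma~2.3.8]{fayolle}, together with elementary monotonicity of the fractional-linear maps involved.
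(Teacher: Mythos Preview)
Your proposal is correct and follows essentially the same route as the paper's proof: on the slit the two roots are complex conjugate, so Vieta's product gives $|\delta_{1}|^{2}=\bar{\lambda}a\bar{a}/(\lambda(\bar{a}^{2}+a^{2})+\lambda\bar{a}a/r_{2})$, Vieta's sum gives $2\zeta$ as a fractional-linear function of $r_{2}$ which one inverts to obtain $l(\zeta)$, and monotonicity of the product in $r_{2}$ yields the bound. The paper's argument is in fact terser than yours---it does not spell out the closed-contour/simplicity discussion or the positivity checks you flag in your last paragraph---so your extra topological bookkeeping is a welcome addition rather than a deviation.
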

		\begin{proof}
			For $r_{2}\in[r_{2}^{(0)},r_{2}^{(1)}]$, $\delta_{0}(r_{2})$, $\delta_{1}(r_{2})$ are complex conjugates, so from \eqref{mk}, 
			\[|\delta_{1}(r_{2})|^{2}=\frac{\bar{\lambda}a\bar{a}}{\lambda(\bar{a}^{2}+a^{2})+\frac{\lambda\bar{a}a}{r_{2}}}.\]
			Clearly, $|\delta_{1}(r_{2})|^{2}$ is an increasing function in $r_2\in[0,1]$, and thus, $|\delta_{1}(r_{2})|^{2}\leq |\delta_{1}(r_{2}^{(1)})|^{2}$. Finally, $l(\zeta)$ is obtained by solving $\mathcal{R}\mathit{e}(\delta_{1}(r_{2}))=\frac{\lambda(\bar{a}^{2}+a^{2})+\bar{\lambda}a\bar{a}+\lambda\bar{a}a+\bar{\lambda}\bar{a}a(1-\frac{1}{r_{2}})}{2[\lambda(\bar{a}^{2}+a^{2})+\frac{\lambda\bar{a}a}{r_{2}}]}$, for $r_{2}$, with $\zeta=\mathcal{R}\mathit{e}(\delta_{1}(r_{2}))$.
		\end{proof}
		\begin{lemma}\label{lemgf3}
			$\delta_{0}(r_{2})$ is analytic in $\mathcal{C}-[r_{2}^{(0)},r_{2}^{(1)}]$.
		\end{lemma}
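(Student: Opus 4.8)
The plan is to read $\delta_0(r_2)$ off the quadratic formula applied to \eqref{mk} and then run the standard branch-cut argument. First I would clear the $1/r_2$: multiplying \eqref{mk} by $r_2$ turns the kernel equation into $\tilde A(r_2)r_1^{2}+\tilde B(r_2)r_1+\tilde C(r_2)=0$, with $\tilde A(r_2)=\lambda(\bar a^{2}+a^{2})r_2+\lambda\bar a a$, $\tilde B(r_2)=-\bigl[(\lambda(\bar a^{2}+a^{2})+a\bar a)r_2+\bar\lambda\bar a a(r_2-1)\bigr]$ and $\tilde C(r_2)=\bar\lambda a\bar a r_2$, all affine in $r_2$. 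Then $\delta_0(r_2)$ and $\delta_1(r_2)$ are the two values $(-\tilde B(r_2)\pm\sqrt{\tilde D(r_2)})/(2\tilde A(r_2))$, where $\tilde D=\tilde B^{2}-4\tilde A\tilde C$ is a polynomial of degree at most two in $r_2$ whose two zeros are, by definition, exactly the branching points $r_2^{(0)}$ and $r_2^{(1)}$, which are real and lie inside the unit disk (\cite{fayolle}, Lemma 2.3.8).

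Next I would construct a single-valued square root. Writing $\tilde D(r_2)=\kappa\,(r_2-r_2^{(0)})(r_2-r_2^{(1)})$, the function $\sqrt{\tilde D(r_2)}$ has monodromy $-1$ around each of the two simple zeros, hence trivial monodromy around any loop that encircles the whole segment $[r_2^{(0)},r_2^{(1)}]$, so it admits a branch holomorphic on $\mathbb{C}\setminus[r_2^{(0)},r_2^{(1)}]$. Consequently both candidates $\delta_{\pm}(r_2):=(-\tilde B(r_2)\pm\sqrt{\tilde D(r_2)})/(2\tilde A(r_2))$ are holomorphic on $\mathbb{C}\setminus[r_2^{(0)},r_2^{(1)}]$, with possible poles only at the unique zero $r_2^{\ast}=-\bar a a/(\bar a^{2}+a^{2})\in[-\tfrac12,0)$ of $\tilde A$ (and if $r_2^{\ast}$ lies on the segment, it is already excluded and nothing more is needed).

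Then I would pin $\delta_0$ to one fixed branch and dispose of $r_2^{\ast}$. The circle $\{|r_2|=1\}$ lies in the common domain of $\delta_{+}$ and $\delta_{-}$ (the slit is strictly inside the disk and $|r_2^{\ast}|\le\tfrac12$), and by Lemma \ref{lemgf1} the moduli of the two roots never coincide there for $r_2\neq1$; hence the root of modulus $<1$, namely $\delta_0$, equals one and the same branch $\delta_{\varepsilon}$ all along the circle. Since $\delta_{\varepsilon}$ is holomorphic on the connected set $\mathbb{C}\setminus\bigl([r_2^{(0)},r_2^{(1)}]\cup\{r_2^{\ast}\}\bigr)$ and agrees with $\delta_0$ on $\{|r_2|=1\}\setminus\{1\}$, it is the unique analytic continuation of $\delta_0$ there. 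To see this continuation has no pole at $r_2^{\ast}$, note that near $r_2^{\ast}$ one has $\operatorname{Re}(1-1/r_2)>0$ (indeed $1-1/r_2^{\ast}>1$), so the Rouch\'e argument in the proof of Lemma \ref{lemgf1} still applies and characterises $\delta_0(r_2)$ as the root lying in $|r_1|\le1$; letting $r_2\to r_2^{\ast}$, this root stays bounded — it tends to $-\tilde C(r_2^{\ast})/\tilde B(r_2^{\ast})$, necessarily of modulus $\le1$, with $\tilde B(r_2^{\ast})\neq0$ because otherwise the quadratic would collapse to $\tilde C\equiv0$ at $r_2^{\ast}$ — while the complementary root escapes to infinity. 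By Riemann's removable-singularity theorem $\delta_0$ extends holomorphically across $r_2^{\ast}$, so $\delta_0$ is holomorphic on all of $\mathbb{C}\setminus[r_2^{(0)},r_2^{(1)}]$; the complementary branch gives the analogous statement for $\delta_1$.

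I expect the only genuinely delicate points to be (a) confirming that $\tilde D$ contributes no branch points beyond the two already identified, so that the slit $[r_2^{(0)},r_2^{(1)}]$ alone carries all the multivaluedness, and (b) verifying that at the zero $r_2^{\ast}$ of the leading coefficient $\tilde A$ it is $\delta_0$ — and not $\delta_1$ — that remains finite, so that the apparent singularity is removable rather than a pole; the single-valued-square-root construction itself is routine.
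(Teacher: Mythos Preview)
Your argument is correct and follows the same overall blueprint as the paper's proof—solve \eqref{mk} by the quadratic formula, identify the branch cut of $\sqrt{\tilde D}$ with the slit $[r_2^{(0)},r_2^{(1)}]$, and check that the remaining apparent singularity is harmless for the ``small'' root $\delta_0$. The one substantive difference is which form of the quadratic formula is used, and hence which singular point has to be resolved. You write $\delta_0=(-\tilde B\pm\sqrt{\tilde D})/(2\tilde A)$ and must then show that the zero $r_2^{\ast}=-\bar a a/(\bar a^{2}+a^{2})<0$ of $\tilde A$ is removable for $\delta_0$; you do this cleanly via the Rouch\'e bound and Riemann's theorem. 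The paper instead uses the rationalised form $\delta_0=2\tilde C/\bigl(-\tilde B\mp\sqrt{\tilde D}\bigr)$, which never divides by $\tilde A$; the potentially troublesome point is then the zero of $\tilde B$ (their $r_2^{*}$), and the paper's computation $D_1(0)>0$, $D_1(r_2^{*})<0$, $D_1(1)\ge 0$ shows this zero lies strictly inside $[r_2^{(0)},r_2^{(1)}]$ and is therefore already excised—no removable-singularity argument needed. Their approach is a touch shorter for exactly this reason, while yours is more explicit about the global single-valuedness of the square root and about pinning $\delta_0$ to a fixed branch; both reach the same conclusion, and the pole at the zero of $\tilde A$ that you correctly attribute to $\delta_1$ is precisely the paper's $\bar r_2=-\lambda\bar a a/\bigl(\lambda(\bar a^{2}+a^{2})\bigr)$.
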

		\begin{proof}
			Using \eqref{mk} denote
			\begin{displaymath}
			\begin{array}{rl}
			s(r_{2})&=r_{2}[\lambda(1-a\bar{a})+\bar{\lambda}(\bar{a}a+a\bar{a})]-\bar{\lambda}\bar{a}a,\\
			D_{1}(r_{2})&=s^{2}(r_{2})-4\bar{\lambda}a\bar{a}r_{2}(\lambda(\bar{a}^{2}+a^{2})r_{2}+\lambda\bar{a}a).
			\end{array}
			\end{displaymath}
			Note that for $r_{2}\in(-\infty,+\infty)-[r_{2}^{(0)},r_{2}^{(1)}]$, $s(r_{2})\neq 0$. Indeed $s(r_{2})=0$ for $r_{2}^{*}=\frac{\bar{\lambda}\bar{a}a}{\lambda(1-a\bar{a})+\bar{\lambda}(\bar{a}a+a\bar{a})}$. It is easy to realize that $0<r_{2}^{*}<1$. Moreover, it is readily shown that $D_{1}(0)>0$, $D_{1}(r_{2}^{*})=-4\bar{\lambda}a\bar{a}r_{2}^{*}(\lambda(\bar{a}^{2}+a^{2})r_{2}^{*}+\lambda\bar{a}a)<0$, $D_{1}(1)=[\lambda(\bar{a}^{2}+a^{2}+\bar{a}a)-\bar{\lambda}a\bar{a}]^{2}\geq 0$, and by definition $D_{1}(r_{2}^{(0)})=0=D_{1}(r_{2}^{(1)})$. Thus, $r_{2}^{(0)}<r_{2}^{*}<r_{2}^{(1)}$. Then, for $r_{2}\in(-\infty,+\infty)-[r_{2}^{(0)},r_{2}^{(1)}]$, 
			\begin{equation}
			\begin{array}{rl}
			\delta_{0}(r_{2})=&\frac{2\bar{\lambda}a\bar{a}r_{2}}{-s(r_{2})+\sqrt{D_{1}(r_{2})}},\,if\,-s(r_{2})>0,\\
			\delta_{0}(r_{2})=&\frac{2\bar{\lambda}a\bar{a}r_{2}}{-s(r_{2})-\sqrt{D_{1}(r_{2})}},\,if\,-s(r_{2})<0,\\
			\delta_{0}(r_{2})\delta_{1}(r_{2})=&\frac{\bar{\lambda}a\bar{a}r_{2}}{\lambda(\bar{a}^{2}+a^{2})r_{2}+\lambda\bar{a}a}.
			\end{array}
			\label{mlp}
			\end{equation}
			
			Note that \eqref{mlp} implies that $\delta_{0}(r_{2})$ has no poles and vanishes at $r_{2}=0$. Moreover $\delta_{1}(r_{2})$ has no zeros and one pole equal to $\bar{r}_{2}=-\frac{\lambda\bar{a}a}{\lambda(\bar{a}^{2}+a^{2})}$. Similar results hold for $\delta_{0}(r_{1})$, $\delta_{1}(r_{1})$.
		\end{proof}
		
	\end{appendices}

\end{document}